\tikzset{
every picture/.style={line width=0.8pt, >=stealth,
                       baseline=-3pt,label distance=-3pt},
emptynode/.style={circle,minimum size=0pt, inner sep=0pt, outer
sep=0},
dotnode/.style={fill=black,circle,minimum size=2.5pt, inner sep=1pt, outer
sep=0},
small_dotnode/.style={fill=black,circle,minimum size=2pt, inner sep=0pt, outer
sep=0},
morphism/.style={fill=white,circle,draw,thin, inner sep=1pt, minimum size=15pt,
                 scale=0.8},
small_morphism/.style={fill=white,circle,draw,thin,inner sep=1pt,
                       minimum size=5pt, scale=0.8},
ellipse_morphism/.style args={#1}{fill=white,circle,draw,thin,inner sep=1pt,
                       minimum size=5pt, scale=0.8,
												ellipse, draw, rotate=#1},
coupon/.style={draw,thin, inner sep=1pt, minimum size=18pt,scale=0.8},
semi_morphism/.style args={#1,#2}{
                  fill=white,semicircle,draw,thin, inner sep=1pt, scale=0.8,
                  shape border rotate=#1,
                  label={#1-90:#2}},
regular/.style={densely dashed}, 
edge/.style={very thick, draw=green, text=black},
overline/.style={preaction={draw,line width=2mm,white,-}},
thin_overline/.style={preaction={draw,line width=#1 mm,white,-}},
thin_overline/.default=2,
thick_overline/.style={preaction={draw,line width=3mm,white,-}},
really_thick/.style={line width=3mm, gray},
boundary/.style={thick,  draw=blue, text=black},
ribbon/.style={line width=1.5mm, postaction={draw,line width=1mm,white}},
ribbon_u/.style args={#1,#2}{line width=#1mm, postaction={draw,line width=#2mm,white}},
cell/.style={fill=black!10},
subgraph/.style={fill=black!30},
midarrow/.style={postaction={decorate},
                 decoration={
                    markings,
                    mark=at position #1 with {\arrow{>}},
                 }},
midarrow/.default=0.5,
midarrow_rev/.style={postaction={decorate},
                 decoration={
                    markings,
                    mark=at position #1 with {\arrow{<}},
                 }},
midarrow_rev/.default=0.5,
block/.style={rectangle, rounded corners, text centered, draw=black, align=center}
}
\tikzstyle{block} = [rectangle, rounded corners, text centered, draw=black, align=center]
\newcommand{\YY}{{\mathbf{Y}}}
\newcommand{\ii}{{\mathbf{i}}}
\newcommand{\coYY}{\reflectbox{\rotatebox[origin=c]{180}{$\YY$}}}
\newcommand{\coii}{\reflectbox{\rotatebox[origin=c]{180}{$\ii$}}}
\newcommand{\YYbar}{\ov{\YY}}
\newcommand{\iibar}{\ov{\ii}}
\newcommand{\coYYbar}{\ov{\coYY}}
\newcommand{\coiibar}{\ov{\coii}}
\newcommand{\cD}{{\mathfrak{D}}}
\newcommand{\Dk}[1]{{\mathcal{D}^{#1}}}
\newcommand{\hDk}[1]{{{h{\mathcal{D}}}^{#1}}}
\newcommand{\delhDk}[1]{{{h{\del \mathcal{D}}}^{#1}}}
\newcommand{\wdtld}[1]{\widetilde{#1}}
\newcommand{\ZRT}{Z_\text{WRT}}    
\newcommand{\ZCY}{{Z_\text{CY}}}
\newcommand{\Skein}{{\text{Skein}}}
\newcommand{\VV}{\mathbf{V}}
\newcommand{\cH}{\mathcal{H}}
\newcommand{\emptyskein}[1]{{\emptyset_{#1}^{\text{sk}}}}
\DeclareMathOperator{\trace}{tr}
\begin{document}

\title{The Y-Product}
\author{Alice Kwon and Ying Hong Tham}
\maketitle

\begin{abstract}
We present a topological construction that
provides many examples of non-commutative Frobenius algebras
that generalizes the well-known pair-of-pants.
When applied to the solid torus,
in conjunction with Crane-Yetter theory,
we provide a topological proof of the Verlinde formula.
We also apply the construction to a solid handlebody
of higher genus,
leading to a generalization of the Verlinde formula
(not the higher genus Verlinde formula);
in particular, we define a generalized $S$-matrix.
Finally, we discuss the relation between our construction
and Yetter's construction of a handle as a Hopf algebra,
and give a generalization.
\end{abstract}

\section{Basic Constructions}
\label{s:basic-construction}

Convention:
Manifolds may be unorientable, in which case the
opposite orientation $\ov{M}$ will just mean $M$ itself.
We will work with smooth manifolds possibly with boundary
and corners;
the boundary will implicitly come with a collar neighborhood structure.
\\

Let $\YY$ be the cone over the discrete space of three points,
or equivalently,
three closed intervals with one endpoint from each identified:
\begin{equation}
\YY =
\begin{tikzpicture}
\node[emptynode] (o) at (0,0) {};
\node[emptynode] (a) at (0,-0.5) {};
\node[emptynode] (b) at (0.43,0.25) {};
\node[emptynode] (c) at (-0.43,0.25) {};
\draw (a) -- (o) -- (b);
\draw (o) -- (c);
\end{tikzpicture}
= [0,1] \sqcup [0,1] \sqcup [0,1] / 0 \sim 0 \sim 0
\end{equation}

\begin{definition}
\label{d:Yprod}
Let $M$ be a manifold with boundary $\del M = N$.
Let $Q$ be the double of $M$, i.e. $Q = M \cup_N \ov{M}$.

The \emph{Y-product on $Q$},
denoted $\YY_Q$,
is the manifold constructed as follows.
Thicken $\YY$ into a surface with corners,
denoted $\wdtld{\YY}$, as follows:
\begin{equation}
\label{e:Y-thick}
\wdtld{\YY} :=
\begin{tikzpicture}
\node[emptynode] (o) at (0,0) {};
\node[emptynode] (a) at (0,-0.5) {}; 
\node[emptynode] (b) at (0.43,0.25) {}; 
\node[emptynode] (c) at (-0.43,0.25) {}; 
\draw[gray] (a) -- (o) -- (b); 
\draw[gray] (o) -- (c);
\node[emptynode] (a1) at (-0.05,-0.5) {}; 
\node[emptynode] (a2) at (0.05,-0.5) {};
\node[emptynode] (b1) at (0.4,0.3) {}; 
\node[emptynode] (b2) at (0.46,0.2) {};
\node[emptynode] (c1) at (-0.46,0.2) {}; 
\node[emptynode] (c2) at (-0.4,0.3) {};
\draw (a2) to[out=90,in=-150] (b2) -- (b1);
\draw (b1) to[out=-150,in=-30] (c2) -- (c1);
\draw (c1) to[out=-30,in=90] (a1) -- (a2);
\node at (-0.3,-0.2) {\footnotesize $I_1$};
\node at (0.35,-0.2) {\footnotesize $I_2$};
\node at (0, 0.35) {\footnotesize $I_3$};
\end{tikzpicture}
\;\;
;
\;\;
\wdtld{\mathbf{i}} :=
\begin{tikzpicture}
\draw[gray] (0,0) -- (0,-0.5);
\draw (-0.1,-0.5) -- (0.1,-0.5);
\draw (-0.1,-0.5) -- (-0.1,0)
	.. controls +(90:0.13cm) and +(90:0.13cm) ..
	(0.1,0) -- (0.1,-0.5);
\end{tikzpicture}
\end{equation}
Then glue three copies of $M \times [-1,1]$
to $N \times \wdtld{\YY}$
by identifying $N \times [-1,1] \subset M \times [-1,1]$
with $N \times I_1, N \times I_2, N \times I_3$,
respectively.
The resulting manifold with boundary (no corners)
is the Y-product on $Q$.

It is clear that the six copies of $M$
at the ends of the three copies of $M \times [-1,1]$
glue up to form three disjoint copies of $Q$.
(See \figref{f:Yprod-decomp} in
\secref{s:half-handle-decomp-Yprod}.)
\hfill $\triangle$
\end{definition}

Clearly, the usual pair of pants is obtained from $M = [0,1]$,
$N = \del M$, $Q = S^1$.

This is the basic construction of the Y-product;
in our main application,
we will consider variants where $N$ is not the entire boundary
of $M$, so that $Q$ is a manifold with boundary.
One may also consider embedded manifolds, say a knot $K$
in $S^3$, and take $Q = K \# \ov{K}$.

An intuitive description of $\YY_Q$ is to hold
$Q$ above still water, so that it and its reflection
is $Q \sqcup Q$.
Slowly lower $Q$ into the water,
until the surface of the water cuts $Q$ at exactly $N$,
so that we see just one $Q$.
This intuitive picture is the key idea behind the
half-handle decomposition of $\YY_Q$
as described in \secref{s:half-handle-decomp-Yprod}
- each time the water passes a critical point,
a corresponding handle is added to build $\YY_Q$.

A Y-product is naturally a cobordism
$\YY_Q : Q \sqcup Q \to Q$.
By taking the dual cobordism, we get a cobordism
$\coYY_Q : Q \to Q \sqcup Q$,
which we call the \emph{Y-coproduct}.

They naturally come with (co)units:
simply take $M \times [-1,1]$
where we take
$M \times \{-1\} \cup N \times [-1,1] \cup M \times \{1\}$
as the incoming or outgoing boundary.
Alternatively, in fitting with the Y-product construction,
we attach $M \times [-1,1]$ to $N \times \wdtld{\mathbf{i}}$
($\wdtld{\ii}$ is defined in \eqnref{e:Y-thick})
along $N \times [-1,1]$.
It is easy to see that attaching $\wdtld{\ii}$
to one of the upper arms of $\wdtld{\YY}$
results in the identity cobordism.
(For more details,
see discussions in
\secref{s:half-handle-decomp-Yprod},
before and in the examples.)
We denote the unit and counit by
$\ii_Q : \emptyset \to Q$ and $\coii_Q: Q \to \emptyset$,
respectively.

More generally, given a fat graph $\Gamma$
(a graph with a thickening to a surface like $\wdtld{\YY}$),
we can associate a construction that attaches
$M \times I$'s to $N \times \Gamma$.
Moreover, under certain modifications,
e.g. resolving a 4-valent vertex into two 3-valent vertices,
the resulting manifold will be unchanged;
for example, the usual diagram for depicting
the Frobenius algebra relations (shown below)
would automatically make $Z(Q)$ a Frobenius algebra,
for any appropriate TQFT $Z$
(here the graph thickenings are implicitly given by
blackboard framing):
\begin{equation}
\label{e:frobenius-topology}
\begin{tikzpicture}
\draw (0,0) -- (-0.3,-0.5);
\draw (0,0) -- (-0.3,0.5);
\draw (0,0) -- (0.3,-0.5);
\draw (0,0) -- (0.3,0.5);
\end{tikzpicture}
\;\; = \;\;
\begin{tikzpicture}
\draw (0,0.2) -- (-0.3,0.5);
\draw (0,0.2) -- (0.3,0.5);
\draw (0,0.2) -- (0,-0.2);
\draw (0,-0.2) -- (-0.3,-0.5);
\draw (0,-0.2) -- (0.3,-0.5);
\end{tikzpicture}
\;\; = \;\;
\begin{tikzpicture}
\draw (-0.3,0.5) -- (-0.3,-0.5);
\draw (0.3,0.5) -- (0.3,-0.5);
\draw (-0.3,-0.2) -- (0.3,0.2);
\end{tikzpicture}
\;\; = \;\;
\begin{tikzpicture}
\draw (-0.3,0.5) -- (-0.3,-0.5);
\draw (0.3,0.5) -- (0.3,-0.5);
\draw (-0.3,0.2) -- (0.3,-0.2);
\end{tikzpicture}
\end{equation}

See \xmpref{x:torus-frobenius-1}, \ref{x:torus-frobenius-2}.

\section{Handle Decompositions for Manifolds with Corners/
Relative Cobordisms}

In order to get a handle on Y-products
for manifolds $Q$ with boundary,
we need a theory of handle decompositions
on manifolds with corners,
or more precisely, relative cobordisms.



It seems that such theory of handle decompositions/
Morse theory is still not very well-known
(at least, they were not known to the authors
at the onset of this project).
Thus, while there are already several works
that lay out such a theory in full
(see e.g. \cite{borodzik2016morse},
\cite{laudenbach2011morse}),
we give a lightning tour through some of the theory,
presenting only constructions and propositions
(mostly without proof) that are relevant to our application.
The only things new are some terminology.

Let us briefly recall the theory of handle decompositions;
we will expand on this theory to include
handle decompositions for manifolds with corners/relative cobordisms.


Let $\Dk{k}$ be the standard $k$-dimensional disk.

\begin{definition}
\label{d:handle}
An \emph{abstract $n$-dimensional $k$-handle},
is $\cH_k := \Dk{k} \times \Dk{n-k}$,
with the following distinguished submanifolds:
\begin{itemize}
\item $\del \Dk{k} \times \Dk{n-k}$: the \emph{attaching region};
\item $\del \Dk{k} \times \{0\}$: the \emph{attaching sphere};
\item $\Dk{k} \times \{0\}$: the \emph{core};
\item $\Dk{k} \times \del \Dk{n-k}$: the \emph{belt region};
\item $\{0\} \times \del \Dk{n-k}$: the \emph{belt sphere};
\item $\{0\} \times \Dk{n-k}$: the \emph{co-core}.
\end{itemize}
\hfill $\triangle$
\end{definition}

\begin{definition}
\label{d:handle-attaching}
Let $M$ be an $n$-manifold with (possibly empty) boundary $\del M$.
Let $\vphi: \del \Dk{k} \times \Dk{n-k} \to \del M$
be an embedding of the attaching region of an abstract $k$-handle
into the boundary of $M$.
The manifold $M' = M \sqcup \Dk{k} \times \Dk{n-k} / \vphi$
is said to be
\emph{obtained from $M$ by attaching a $k$-handle}.
(We smooth corners at the boundary of the attaching region
in a canonical manner.)
We refer to the region corresponding to the abstract handle
as the ($k$-)handle.
\hfill $\triangle$
\end{definition}

\begin{definition}
An \emph{elementary cobordism of index $k$} is a cobordism
$M: \del_- M \to \del_+ M$
that is obtained from the identity cobordism
$\del_- M \times [0,1]$ by attaching a $k$-handle
to its outgoing boundary.
\hfill $\triangle$
\end{definition}

\begin{definition}
\label{d:handle-decomp}
A \emph{handle decomposition} of a cobordism
$M : \del_- M \to \del_+ M$
is a filtration of $M$
\[
\del_- M \times [0,1] =: M_0 \subset M_1 \subset \cdots
\subset M_{l-1} \subset M_l := M
\]
such that $M_{i+1}$ is obtained from $M_i$ by attaching a handle
to the outgoing boundary of $M_i$.
\hfill $\triangle$
\end{definition}

By smoothing corners at the boundary of attaching regions,
it is clear that a handle decomposition
allows us to write any cobordism as a
composition of elementary cobordisms.

\begin{definition}
\label{d:dual-decomp}
Given a handle decomposition of a cobordism
$M : \del_- M \to \del_+ M$,
the \emph{dual handle decomposition}
is the handle decomposition of
$M : \ov{\del_+ M} \to \ov{\del_- M}$
by simply treating each $k$-handle as a $(n-k)$-handle
(where $n$ is the dimension of $M$).
\hfill $\triangle$
\end{definition}

Thus, a handle decomposition on $M: \emptyset \to N$
naturally endows a handle decomposition on its double
$Q = M \cup_N \ov{M} : \emptyset \to N \to \emptyset$.

\begin{proposition}
\label{p:handle-decomp-exist}
Any cobordism admits a handle decomposition.
\end{proposition}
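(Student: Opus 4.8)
The plan is to build the handle decomposition from a Morse function on the cobordism, following the classical strategy adapted to the corner structure; since the manifold-with-corners Morse theory is already developed in \cite{borodzik2016morse}, \cite{laudenbach2011morse}, I would mainly assemble the pieces.

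First I would produce a Morse function adapted to the cobordism structure. Given $M : \del_- M \to \del_+ M$ of dimension $n$, I would exhibit a smooth $f : M \to [0,1]$ with $f^{-1}(0) = \del_- M$ and $f^{-1}(1) = \del_+ M$, having no critical points in collars of these two faces, all of whose interior critical points are nondegenerate, and such that distinct critical points receive distinct critical values. Existence rests on the genericity of Morse functions in $C^\infty(M)$, a partition-of-unity argument to prescribe the behavior near $\del_- M$ and $\del_+ M$ using the fixed collar structure, and a final small perturbation supported away from the boundary to separate the critical values.

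Next I would fix a Riemannian metric and a gradient-like vector field $X$ for $f$, and choose regular values separating the critical values so that exactly one critical point lies between consecutive ones. The sublevel sets $f^{-1}([0,a])$ at these regular values furnish the filtration: the flow of $X$ identifies the sublevel set below the lowest critical value with $\del_- M \times [0,1]$, and across a gap containing no critical point the flow gives a product diffeomorphism, so no handle is added there. Across a single critical point $p$ of index $k$, the Morse Lemma provides local coordinates in which $f$ is standard quadratic, and the classical computation of handle attachment across a critical point identifies the larger sublevel set as obtained from the smaller by attaching a $k$-handle, with attaching sphere the boundary of the descending disk of $p$ and core the descending disk itself, exactly as in Definition \ref{d:handle-attaching}.

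The hard part will be the corners. In a relative cobordism $\del M$ decomposes into $\del_- M$, $\del_+ M$, and a vertical face meeting these along corners, and one must choose $f$ and $X$ so that $X$ is tangent to the vertical face and its flow preserves the corner strata. This is precisely where the manifold-with-corners Morse theory of \cite{borodzik2016morse}, \cite{laudenbach2011morse} is required: to guarantee that critical points can be pushed off the boundary and the corner locus, and to ensure that the canonical smoothing of corners at attaching regions (Definition \ref{d:handle-attaching}) is compatible with the collar structure fixed at the outset. Granting those results, the assembly above goes through, yielding a handle decomposition as in Definition \ref{d:handle-decomp}.
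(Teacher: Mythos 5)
Your first two paragraphs are correct and take exactly the paper's route: the paper's entire proof is a citation to \cite{milnor2016morse}, whose content is precisely the classical argument you spell out (Morse function adapted to the cobordism, gradient-like field, sublevel sets, passing an index-$k$ critical point attaches a $k$-handle).

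Your final paragraph, however, is out of scope for this statement. \prpref{p:handle-decomp-exist} concerns ordinary cobordisms between closed manifolds, as in \defref{d:handle-decomp}; there is no vertical face and no corner locus, so none of the difficulties you defer to \cite{borodzik2016morse} and \cite{laudenbach2011morse} arise here. The cornered case is the separate \prpref{p:half-handle-decomp-exist} on half-handle decompositions of relative cobordisms, and there the paper argues differently from what you sketch: rather than constructing $f$ and $X$ tangent to the vertical wall, it doubles the relative cobordism along the vertical boundary, takes a $\ZZ/2$-invariant Morse function, and restricts the resulting handle decomposition back to the original half. This is not a gap in your proof of the stated proposition, but you should be aware that you have smuggled in a harder statement and attributed to it a proof strategy that is not the one the paper (or its references, as used here) actually employs for that case.
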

\begin{proof}
See \cite{milnor2016morse}.
\end{proof}

Now let us recall \emph{relative cobordisms},
which are cobordisms between manifolds with boundary
(see e.g. \cite{borodzik2016morse}).
Note that in some texts, relative cobordisms are only defined
between manifolds with boundary when the boundaries
are the same,
and the vertical wall (as defined below)
is always the identity cobordism.

\begin{definition}
\label{relative-cobordism}
A \emph{relative cobordism}
from a manifold with boundary $M_{in}$
to another manifold with boundary $M_{out}$,
denoted $(W, \del_v W):
(M_{in}, \del M_{in}) \to (M_{out}, \del M_{out})$
is a manifold $W$ with corners
$\del_-^2 W, \del_+^2 W$
which separate the boundary $\del W$
into three submanifolds $\del_- W, \del_+ W, \del_v W$,
and identifications
$M_{in} \simeq \overline{\del_- W},
M_{out} \simeq \overline{\del_+ W}$;
the three submanifolds satisfy:
\begin{itemize}
\item their union is $\del W$,
\item $\del_- W, \del_+ W$ are disjoint,
\item $\del_- W \cap \del_v W = \del_-^2 W$,
\item $\del_+ W \cap \del_v W = \del_+^2 W$.
\end{itemize}
We refer to $\del_- W, \del_+ W,$ and $\del_v W$
as the \emph{incoming boundary}, \emph{outgoing boundary},
and \emph{vertical wall} of $W$, respectively.

We call $\del_-^2 W$ and $\del_+^2 W$
the \emph{incoming corner} and \emph{outgoing corner}, respectively.
Note that $\del_v W$ is a cobordism from the former to the latter.

If $W$ is oriented, we make the following choices for orientations:
\begin{itemize}
\item $\del_+ W$ has the induced orientation,
\item $\del_- W, \del_v W$ have the opposite induced orientation,
\item $\del_+^2 W, \del_-^2 W$ have the induced orientation with respect to
	$\del_+ W, \del_- W$, respectively.
\end{itemize}

See \figref{f:half-handles}.
\hfill $\triangle$
\end{definition}

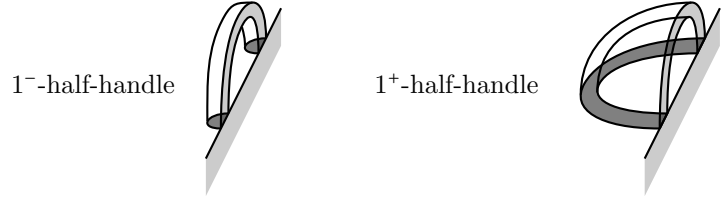
\begin{figure}[h] 
\centering
\begin{tikzpicture}
\node at (-1.5,1) {$1^-$-half-handle};
\draw[fill=gray]
	(0.2,0.4) .. controls +(180:0.3cm) and +(180:0.3cm) ..
	(0.3,0.6);
\draw[fill=gray]
	(0.8,1.6) .. controls +(180:0.3cm) and +(180:0.3cm) ..
	(0.7,1.4);
\draw[fill=black, opacity=0.2, draw=none]
	(0.2,0.4) .. controls +(90:0.5cm) and +(-130:0.4cm) ..
	(0.5,2) .. controls +(50:0.2cm) and +(90:0.5cm) ..
	(0.8,1.6) --
	(0.7,1.4) .. controls +(90:0.5cm) and +(60:0.2cm) ..
	(0.5,1.8) .. controls +(-120:0.3cm) and +(90:0.5cm) ..
	(0.3,0.6) -- (0.2,0.4);
\draw[fill=black, opacity=0.2, draw=none]
	(0,0) -- (1,2) -- (1,1.5) -- (0,-0.5) -- (0,0);
\draw (0,0) -- (1,2);
\draw (0.2,0.4) .. controls +(90:0.5cm) and +(-130:0.4cm) ..
	(0.5,2) .. controls +(50:0.2cm) and +(90:0.5cm) ..
	(0.8,1.6);
\draw (0.3,0.6) .. controls +(90:0.5cm) and +(-120:0.3cm) ..
	(0.5,1.8) .. controls +(60:0.2cm) and +(90:0.5cm) ..
	(0.7,1.4);
\draw (0.02,0.5) .. controls +(90:0.5cm) and +(-120:0.5cm) ..
	(0.25,1.9) .. controls +(60:0.2cm) and +(180:0.2cm) ..
	(0.62,2.075);
\draw (0.52,1.5) .. controls +(90:0.1cm) and +(-75:0.1cm) ..
	(0.48,1.75);
\end{tikzpicture}
\;\;\;\;\;\;\;\;\;
\begin{tikzpicture}
\node at (-2.5,1) {$1^+$-half-handle};
\draw[fill=gray]
	(0.2,0.4) .. controls +(180:0.8cm) and +(-120:0.4cm) ..
	(-0.8,1) .. controls +(60:0.4cm) and +(180:0.8cm) ..
	(0.8,1.6) --
	(0.7,1.4) .. controls +(180:0.6cm) and +(60:0.3cm) ..
	(-0.6,1) .. controls +(-120:0.3cm) and +(180:0.6cm) ..
	(0.3,0.6) -- (0.2,0.4);
\draw[fill=black, opacity=0.2, draw=none]
	(0.2,0.4) .. controls +(90:0.5cm) and +(-130:0.4cm) ..
	(0.5,2) .. controls +(50:0.2cm) and +(90:0.5cm) ..
	(0.8,1.6) --
	(0.7,1.4) .. controls +(90:0.5cm) and +(60:0.2cm) ..
	(0.5,1.8) .. controls +(-120:0.3cm) and +(90:0.5cm) ..
	(0.3,0.6) -- (0.2,0.4);
\draw[fill=black, opacity=0.2, draw=none]
	(0,0) -- (1,2) -- (1,1.5) -- (0,-0.5) -- (0,0);
\draw (0,0) -- (1,2);
\draw (0.2,0.4) .. controls +(90:0.5cm) and +(-130:0.4cm) ..
	(0.5,2) .. controls +(50:0.2cm) and +(90:0.5cm) ..
	(0.8,1.6);
\draw (0.3,0.6) .. controls +(90:0.5cm) and +(-120:0.3cm) ..
	(0.5,1.8) .. controls +(60:0.2cm) and +(90:0.5cm) ..
	(0.7,1.4);
\draw (-0.85,0.85) .. controls +(90:0.8cm) and +(180:0.7cm) ..
	(0.62,2.075);
\draw (-0.63,0.88) .. controls +(90:0.8cm) and +(180:0.5cm) ..
	(0.59,1.881);
\end{tikzpicture}
\caption{
Comparison of positive and negative type half-handles.
The light gray regions are part of the vertical boundary,
the dark gray regions are the attaching regions.
Note that the vertical boundaries are the same.
}
\label{f:half-handles}
\end{figure}

We may sometimes implicitly identify
$M_{in}$ with $\del_- W$ and $M_{out}$ with $\del_+ W$.

\begin{definition}
\label{d:half-handle}
Let $\hDk{k}$ denote the half-disk,
$\hDk{k} =
	\{\mathbf{x} = (x_1,\ldots,x_k)
	| \; |\mathbf{x}| \leq 1, x_1 \geq 0\}$.
Let $\delhDk{k}$ denote the hemisphere portion of the boundary,
$\delhDk{k} = 
	\{\mathbf{x} = (x_1,\ldots,x_k)
	| \; |\mathbf{x}| = 1, x_1 \geq 0\}$,
and let $\del_v \hDk{k}$ denote the flat portion of the boundary,
$\del_v \hDk{k} = 
	\{\mathbf{x} = (x_1,\ldots,x_k)
	| \; |\mathbf{x}| \leq 1, x_1 = 0\}$.

Let $0 \leq k \leq n-1$.
An \emph{abstract $n$-dimensional $k^-$-half-handle}
is $h\cH_k^- := \Dk{k} \times \hDk{n-k}$,
with the following distinguished submanifolds:
\begin{itemize}
\item $\del \Dk{k} \times \hDk{n-k}$:
	the \emph{attaching region};
\item $\del \Dk{k} \times \{0\}$:
	the \emph{attaching sphere};
\item $\Dk{k} \times \{0\}$: the \emph{core};
\item $\Dk{k} \times \delhDk{n-k}$:
	the \emph{belt region};
\item $\{0\} \times \delhDk{n-k}$:
	the \emph{belt disk};
\item $\{0\} \times \hDk{n-k}$: the \emph{co-core}.
\end{itemize}

An \emph{abstract $n$-dimensional $k^+$-half-handle}
is $h\cH_k^+ := \hDk{k+1} \times \Dk{n-k-1}$,
with the following distinguished submanifolds:
\begin{itemize}
\item $\delhDk{k+1} \times \Dk{n-k-1}$:
	the \emph{attaching region};
\item $\delhDk{k+1} \times \{0\}$:
	the \emph{attaching disk};
\item $\hDk{k+1} \times \{0\}$: the \emph{core};
\item $\hDk{k+1} \times \del \Dk{n-k-1}$:
	the \emph{belt region};
\item $\{0\} \times \del \Dk{n-k-1}$:
	the \emph{belt sphere};
\item $\{0\} \times \Dk{n-k-1}$: the \emph{co-core}.
\end{itemize}

We say that these are half-handles
of dimension $n$, index $k$, and type + and -, respectively.

The \emph{vertical wall}
of a $k^\pm$-half-handle
is the $(n-1)$-dimensional $k$-handle in its boundary given by:
\begin{itemize}
\item $\Dk{k} \times \del_v \hDk{n-k}$ for $k^-$ half-handles;
\item $\del_v \hDk{k+1} \times \hDk{n-k-1}$ for $k^+$ half-handles.
\end{itemize}
\hfill $\triangle$
\end{definition}

Essentially,
a $k^-$-half-handle is obtained from
a $k$-handle of the same dimension
by cutting the $\Dk{n-k}$ factor,
the ``thickening'' factor, in half;
similarly, a $k^+$-half-handle is obtained from
a $(k+1)$-handle of the same dimension
by cutting the $\Dk{k+1}$ factor,
the ``core'' factor, in half.

Positive/negative type half-handles are called
left/right half-handles, respectively,
in \cite{borodzik2016morse},
and are called
D-type/N-type in \cite{laudenbach2011morse}.

We also note that by swapping the (half-)disk factors,
and swapping the labels
``attaching'' $\leftrightarrow$ ``belt''
and ``core'' $\leftrightarrow$ ``co-core'',
a $k^\pm$-half-handle turns into an $(n-1-k)^\mp$-half-handle
(note the sign change);
in other words,
the dual of a $k^\pm$-half-handle,
i.e. when it is viewed ``upside-down'',
is a $(n-1-k)^\mp$-half-handle.

\begin{definition}
\label{d:half-handle-attaching}
Let $(W,\del_v W) : (\del_- W, \del_-^2 W)
\to (\del_+ W, \del_+^2 W)$ be a relative cobordism.
Let $\vphi$ be an embedding of the attaching region of
an abstract $k^\pm$-half-handle into $\del_+ W$
such that the restriction of $\vphi$ to the vertical wall
of the half-handle is an attaching map.
The manifold
$W' = W \sqcup (k^\pm\text{-half-handle}) / \vphi$
is said to be
\emph{obtained from $W$ by attaching a $k^\pm$-half-handle}.
We refer to the region corresponding to the abstract half-handle
as the ($k^\pm$-)half-handle.
\hfill $\triangle$
\end{definition}

Note that attaching a positive type half-handle
to a relative cobordism $W$
does not change the diffeomorphism type of $W$,
since the attaching region is a disk.
For negative type half-handles,
see \lemref{l:negative-half-handle-identity}.

\begin{definition}
\label{d:half-handle-decomp}
A \emph{half-handle decomposition} of a relative cobordism $W$
is a filtration of $W$
\[
\del_- W \times [0,1] =: W_0 \subset W_1 \subset \cdots
\subset W_{l-1} \subset W_l := W
\]
such that $W_{i+1}$ is obtained from $W_i$ by attaching a
handle or half-handle,
and the restriction of the filtration to the vertical wall $\del_v W$
is a handle decomposition
\footnote{Possibly with ``trivial steps'',
which occur when a handle (not a half-handle) is attached.}.
\hfill $\triangle$
\end{definition}

\begin{definition}
\label{d:dual-decomp-hf}
Given a handle decomposition of a relative cobordism
$(W,\del_v W) : (\del_- W, \del_-^2 W)
\to (\del_+ W, \del_+^2 W)$,
the \emph{dual handle decomposition}
is the handle decomposition of
$(W,\del_v W) : (\ov{\del_- W}, \ov{\del_-^2 W})
\to (\ov{\del_+ W}, \ov{\del_+^2 W})$,
by simply treating each $k$-handle as a $(n-k)$-handle,
and each $k^\pm$-half-handle as a $(n-1-k)^\mp$-half-handle
(where $n$ is the dimension of $M$).
\hfill $\triangle$
\end{definition}

\begin{proposition}
\label{p:half-handle-decomp-exist}
Any relative cobordism admits a half-handle decomposition.
\end{proposition}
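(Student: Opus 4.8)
The plan is to prove this by Morse theory adapted to the corner structure, mirroring the classical argument behind Proposition~\ref{p:handle-decomp-exist}. The essential new ingredient is an \emph{adapted Morse function} $f \colon W \to [0,1]$, by which I mean one satisfying: $\del_- W = f^{-1}(0)$ and $\del_+ W = f^{-1}(1)$ are regular level sets containing no critical points; every critical point of $f$ is nondegenerate and lies either in the interior of $W$ or in the interior of the vertical wall $\del_v W$, never on the corner locus $\del_-^2 W \cup \del_+^2 W$; the restriction $f|_{\del_v W}$ is itself a Morse function exhibiting $\del_v W$ as a cobordism $\del_-^2 W \to \del_+^2 W$; and all critical values are distinct. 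The existence of such functions on manifolds with corners is exactly the technical content developed in \cite{borodzik2016morse} and \cite{laudenbach2011morse}, and I would invoke it rather than reprove it.

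Granting such an $f$, I would read off the filtration in the standard way. Choose regular values $0 = c_0 < c_1 < \cdots < c_l = 1$ separating consecutive critical values and set $W_i := f^{-1}([0,c_i])$, smoothing corners along the attaching regions as in Definition~\ref{d:handle-attaching}. Passing a critical-point-free interval changes nothing up to diffeomorphism, so each step $W_i \subset W_{i+1}$ is governed by a single critical point. An interior critical point of index $k$ contributes an ordinary $k$-handle via the usual local model \cite{milnor2016morse}, and since it is disjoint from $\del_v W$ it leaves the wall unchanged --- this is precisely the ``trivial step'' anticipated in the footnote to Definition~\ref{d:half-handle-decomp}. A critical point lying on $\del_v W$ contributes a half-handle.

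The classification of wall critical points into the two half-handle types uses a relative Morse lemma: near such a point, $f$ has a nondegenerate Hessian whose restriction to $T(\del_v W)$ is the Hessian of $f|_{\del_v W}$, and the remaining inward normal direction is either an ascending or a descending direction for $f$. When the inward normal is an ascending direction (so that the total Morse index equals the index $k$ of $f|_{\del_v W}$ at that point), the local model is $\Dk{k} \times \hDk{n-k}$, a $k^-$-half-handle; when the inward normal is a descending direction (total index $k+1$, with $f|_{\del_v W}$ still of index $k$), it is $\hDk{k+1} \times \Dk{n-k-1}$, a $k^+$-half-handle. In either case the vertical wall of the attached half-handle is, by Definition~\ref{d:half-handle}, an $(n-1)$-dimensional $k$-handle, and it coincides with the handle that $f|_{\del_v W}$ attaches to the wall at this same critical value; hence the induced filtration of $\del_v W$ is a genuine handle decomposition, as Definition~\ref{d:half-handle-decomp} demands. (For negative type the diffeomorphism type of $W$ genuinely changes, consistent with \lemref{l:negative-half-handle-identity}.)

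The main obstacle is not the bookkeeping above but securing the adapted Morse function together with its local normal forms: one must perturb $f$ so that it is simultaneously Morse on $W$ and on $\del_v W$, push all critical points off the corners $\del_-^2 W \cup \del_+^2 W$, and establish a Morse lemma valid at a critical point sitting on a codimension-one boundary face with the correct interaction between the tangential Hessian and the normal direction. These are the delicate analytic points, and they are precisely what the cited references supply; the only genuinely new content here is the matching of each boundary critical point with the appropriately typed $k^\pm$-half-handle and the verification that the wall data assemble into a handle decomposition.
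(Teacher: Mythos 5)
Your proposal is correct, but it takes a different route from the paper. The paper's proof is a doubling trick: form the double of $W$ along the vertical wall $\del_v W$, find a $\mathbb{Z}/2$-invariant Morse function on the double, apply ordinary Morse theory to obtain an equivariant handle decomposition, and observe that intersecting with the original half turns each handle meeting the fixed locus into a half-handle, citing \cite{borodzik2016morse}*{Section 2} for the complete argument. You instead work directly on the cornered manifold with an adapted Morse function and a boundary Morse lemma, classifying wall critical points by whether the inward normal is ascending or descending. These are really two packagings of the same analysis --- a $\mathbb{Z}/2$-invariant function on the double restricts to an adapted function on $W$, and the sign of the normal Hessian eigenvalue at a fixed critical point is exactly your ascending/descending dichotomy --- and both outsource the same hard existence statement to the same references. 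What the doubling buys is economy: the local models at wall critical points come for free from intersecting an ordinary (equivariant) handle with half the double, so no separate corner Morse lemma is needed. What your direct argument buys is transparency: it exhibits intrinsically where the two types $k^\pm$ come from, with index bookkeeping that correctly matches Definition~\ref{d:half-handle} ($\Dk{k} \times \hDk{n-k}$ for ascending normal, $\hDk{k+1} \times \Dk{n-k-1}$ for descending), and it verifies explicitly the requirement of Definition~\ref{d:half-handle-decomp} that the induced filtration of $\del_v W$ is a genuine handle decomposition, with trivial steps exactly at the interior critical points.
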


\begin{proof}
The idea is to double the relative cobordism along the
vertical boundary,
find a $\ZZ/2$-invariant Morse function,
apply Morse theory to get a handle decomposition,
and observe that we get a half-handle decomposition
on the original relative cobordism.
See \cite{borodzik2016morse}*{Section 2}
for a complete proof.
\end{proof}

Next, we consider the dual decomposition:

\begin{proposition}
\label{p:dual-decomp-half}
Let $M$ be a relative cobordism,
and let $(M_i)_{i=0,\ldots,l}$ be a half-handle decomposition.
Let $M'$ be the relative cobordism
with the same underlying manifold $M$,
but incoming and outgoing boundaries/corners are swapped.

Then there exists a half-handle decomposition
$(M_i')_{i=0,\ldots,l}$ of $M'$
such that,
if the $i$-th (half-)handle attachment for $M$ is a
$k$-handle/$k^\pm$-half-handle,
then the $(l+1-i)$-th (half-)handle attachment for $M'$ is a 
$(n-k)$-handle/$(n-1-k)^\mp$-half-handle.
\end{proposition}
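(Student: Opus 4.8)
The plan is to reduce the statement to the classical dual handle decomposition (Definition~\ref{d:dual-decomp}) by passing through the doubling construction already used in the proof of Proposition~\ref{p:half-handle-decomp-exist}. Set $n := \dim M$ and double $M$ along its vertical wall to obtain an ordinary $n$-dimensional cobordism $DM = M \cup_{\del_v M} \ov{M}$, carrying the involution $\tau$ that fixes $\del_v M$ pointwise and exchanges the two copies of $M$. The construction in Proposition~\ref{p:half-handle-decomp-exist} sets up a bijection between half-handle decompositions of $M$ and $\tau$-equivariant handle decompositions of $DM$ (equivalently, $\tau$-invariant Morse functions): under this bijection a $k^-$-half-handle corresponds to a $k$-handle whose critical point lies on the fixed locus with the normal direction $x_1$ to $\del_v M$ \emph{ascending}, a $k^+$-half-handle to a $(k+1)$-handle whose on-locus critical point has $x_1$ \emph{descending}, and an ordinary $k$-handle of $M$ to a mirror pair of $k$-handles lying off the fixed locus. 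First I would transport the given decomposition $(M_i)$ to such an equivariant decomposition of $DM$, presented by an invariant Morse function $Df$ with $Df|_M = f$.

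Next I would apply the classical dual decomposition to $DM$, realized by the Morse function $g = 1 - Df$. Negating an invariant function keeps it invariant, so $g$ is again $\tau$-equivariant and its dual handle decomposition of $DM : \ov{\del_+(DM)} \to \ov{\del_-(DM)}$ is $\tau$-equivariant as well. Restricting this equivariant decomposition back to one half, i.e. to $M$, and invoking the same bijection in reverse, yields a half-handle decomposition of the underlying manifold $M$ with incoming and outgoing boundaries/corners interchanged — that is, of $M'$. Because the critical values of $g$ are obtained from those of $Df$ by $v \mapsto 1-v$, the order of the critical points (hence of the (half-)handle attachments) is reversed, producing the index shift $i \mapsto l+1-i$ demanded in the statement; the trivial collar steps $M_0$ and $M_0'$ correspond under the reversal.

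It remains to track the handle \emph{types}, which is where the sign change $\pm \mapsto \mp$ must be verified. At an on-locus critical point I would fix equivariant Morse coordinates $x_1,\dots,x_n$ with $\del_v M = \{x_1 = 0\}$ and $\tau : x_1 \mapsto -x_1$; whether $x_1$ is an ascending or descending direction of $Df$ is exactly what distinguishes $k^-$ from $k^+$ (co-core halved versus core halved). Passing to $g = 1-Df$ flips every direction between ascending and descending, so it interchanges ``core halved'' with ``co-core halved'', i.e. $+ \leftrightarrow -$, while sending the ambient index $\lambda$ to $n-\lambda$. Carrying this through the dictionary of the first paragraph gives $k^-\ (\lambda = k) \mapsto (n-1-k)^+$ and $k^+\ (\lambda = k+1) \mapsto (n-1-k)^-$, while an off-locus mirror pair of $k$-handles dualizes to a mirror pair of $(n-k)$-handles, i.e. an ordinary $k$-handle of $M$ becomes an $(n-k)$-handle of $M'$. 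This matches Definition~\ref{d:dual-decomp-hf} and the abstract half-handle duality noted after Definition~\ref{d:half-handle}. Finally, the restriction of $(M_i')$ to the vertical wall $\del_v M$ is a handle decomposition: it is precisely the classical dual (Definition~\ref{d:dual-decomp}) of the handle decomposition that $(M_i)$ restricts to on $\del_v M$, so the filtration condition of Definition~\ref{d:half-handle-decomp} holds.

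The main obstacle is the type bookkeeping in the third paragraph: everything hinges on checking that negating the Morse function swaps the two normal-direction cases in the equivariant local model, and that this swap is compatible with the core/co-core relabeling built into the abstract duality $k^\pm \mapsto (n-1-k)^\mp$. Setting up genuinely $\tau$-equivariant Morse coordinates — and an equivariant gradient-like field, so that the restricted filtration really is a half-handle filtration rather than merely a topological one — is the delicate point; the reindexing $i \mapsto l+1-i$ and the ordinary-handle case are then formal. I would cite \cite{borodzik2016morse} for the equivariant Morse theory underlying the doubling bijection.
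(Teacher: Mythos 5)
Your proposal is correct and takes essentially the same approach as the paper: the paper's proof is a one-line appeal to exactly this combination of the doubling argument (from Proposition~\ref{p:half-handle-decomp-exist}) and the classical dual-decomposition argument for ordinary handle decompositions. Your write-up simply supplies the details --- the equivariant Morse function, negation $Df \mapsto 1 - Df$, and the index/type bookkeeping $k^\pm \mapsto (n-1-k)^\mp$ --- all of which check out against Definition~\ref{d:dual-decomp-hf}.
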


\begin{proof}
Follows from the doubling argument
and the argument for usual handle decompositions.
\end{proof}

Before we move on to applications,
we consider another variant of cobordisms,
namely \emph{cornered cobordisms},
(or \emph{cobordism with corners} in \cite{yetter1997portrait}).
These are also cobordisms between manifolds with boundary,
but they are closer to the other notion of
relative cobordism as mentioned above
(when the boundary is fixed).

Cornered cobordisms were used in \cite{tham-thesis}
to formulate the extended Crane-Yetter theory
(see \defref{d:zcy-elementary} below).
Note the use of a semicolon instead of a comma
in $(W;N)$ below.

\begin{definition}
A \emph{cornered cobordism} $(W;N): M_{in} \to M_{out}$,
is a manifold $W$ with corner $N$
which separates $\del W$ into two submanifolds
$M_-, M_+$ with boundary $N$,
together with identifications
$M_{in} \simeq \ov{M_-}$ and $M_{out} \simeq M_+$.

The composition $(W';N') \circ (W;N)$
of cornered cobordisms
$(W;N) : M_{in} \to M_{out},
(W';N') : M_{in}' = M_{out} \to M_{out}'$
consists of the manifold
$W' \cup_{\ov{M_-'} \simeq M_{out} \simeq M_+} W$,
along with the obvious identifications
for the boundaries.
\hfill $\triangle$
\end{definition}

We sometimes forget the identifications and treat
$M_-,M_+$ as submanifolds of $\del W$,
but it will be important for the $S$-matrix computation
(see \xmpref{x:solid-torus-S-matrix-topology})
to keep this identification explicit.

One may treat a relative cobordism as a cornered cobordism
by taking, say, the outgoing corner
as the corner for the cornered cobordism.
An application of this is to use
the definition of extended Crane-Yetter theory
on cornered cobordisms (\defref{d:zcy-elementary})
to define maps associated to relative cobordisms
(see \eqnref{e:zrt-zcy}, \xmpref{x:cy-Yprod-2},
\figref{f:CY-RT}).

Another application is to help formulate the following
useful fact about negative type half-handles,
which intuitively says that
attaching negative type half-handles
does not change the ``bulk'' of a relative cobordism:

\begin{lemma}
\label{l:negative-half-handle-identity}
Let $M$ be a relative cobordism obtained from
attaching only negative type half-handles to
an identity relative cobordism.
Then, treating $M$ as a cornered cobordism
with corner $\del_+^2 M$,
$M$ is an identity cornered cobordism.
\end{lemma}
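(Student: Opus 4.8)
The plan is to build an explicit product structure on $M$ as a cornered cobordism by writing down a nowhere–vanishing gradient–like vector field; the crux is that every critical point coming from a negative half-handle lies on the vertical wall, hence on the \emph{incoming} boundary of the cornered cobordism, where it can be pushed off into the bulk.

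First I would record the half-handle decomposition by a Morse function $f\colon M\to[0,1]$ with $\del_- M=f^{-1}(0)$, $\del_+ M=f^{-1}(1)$, and $f|_{\del_v M}$ Morse. In the standard model $f=c-\lvert x\rvert^2+\lvert y\rvert^2$ on a $k^-$-half-handle $\Dk{k}\times\hDk{n-k}$ (with $x\in\Dk{k}$, $y\in\hDk{n-k}$) the critical point is $(0,0)$; since $0$ lies in the flat face $\del_v\hDk{n-k}$, it sits on $\del_v M$, and $\nabla f$ is tangent to $\{y_1=0\}=\del_v M$, its $y_1$-component being $2y_1$.

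Viewing $M$ as a cornered cobordism with corner $\del_+^2 M$ makes $\del_v M$ part of the incoming boundary $M_-=\del_- M\cup\del_v M$. Near each such critical point I would replace $f$ by $g=f+\delta\,\rho\,y_1$, with $\rho$ a bump centered at $(0,0)$; then along $\{y_1=0\}$ one has $\del_{y_1}g=\delta\rho>0$, so the gradient gains an inward normal component and the zero of $\nabla g$ is displaced to $y_1<0$, outside $M$. Leaving $f$ unchanged near the outgoing corner—where it is already tangent to $\del_v M$ and increases toward $\del_+^2 M$—would produce a $g$ with no interior critical points whose gradient is inward along $\del_- M$ and along the handle part of $\del_v M$, tangent along a collar of $\del_+^2 M$, and outward along $\del_+ M$. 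The flow of $\nabla g$ then trivializes the cornered cobordism as $M\cong M_+\times[0,1]$, with $M_+=\del_+ M$.

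The conceptual reason this should succeed is that the belt region $\Dk{k}\times\delhDk{n-k}\cong\Dk{n-1}$ of a negative half-handle is a disk: dually, under the correspondence $k^\pm\leftrightarrow(n{-}1{-}k)^\mp$ of \defref{d:dual-decomp-hf}, the half-handle is attached along a disk and contributes nothing—equivalently, the half-disk $\hDk{n-k}$ is itself an identity cornered cobordism from its flat face to its round face, so $\Dk{k}\times\hDk{n-k}$ with this flat-to-round time is a genuine product. I expect the main difficulty to be the bookkeeping at the outgoing corner: a negative half-handle really does attach an honest $k$-handle to $\del_v M$, so $f|_{\del_v M}$ has genuine critical points and $\del_v M$ is not a product. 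The hard part will be checking that the inward push absorbs this handle part of $\del_v M$ into the bottom face $M_+\times\{0\}$ while the trivial collar of $\del_+^2 M$ becomes the side $\del_+^2 M\times[0,1]$, with $\nabla g$ nowhere zero across the transition—a local verification in the half-handle model that turns precisely on the transverse $y_1$-direction to $\del_v M$ being an unobstructed product direction.
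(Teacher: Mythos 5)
Your perturbation step is sound, and it is exactly the Morse-theoretic content of the negative-type condition: in the model $f=c-|x|^2+|y|^2$ on $\Dk{k}\times\hDk{n-k}$ the Hessian is positive definite in the normal direction $y_1$, so the zero of $\nabla(f+\delta\rho\,y_1)$ moves to $y_1=-\delta/2<0$, off the manifold (for a positive half-handle the same perturbation would push it into the interior, which is why the statement is special to negative type). The gap is the sentence ``the flow of $\nabla g$ then trivializes the cornered cobordism as $M\cong M_+\times[0,1]$'' --- and, contrary to your closing paragraph, what is missing there is not corner bookkeeping but the actual content of the lemma. Since your perturbation vanishes on $\{y_1=0\}$, the restriction $g|_{\del_v M}=f|_{\del_v M}$ keeps all of its critical points, so level sets of $g$ are tangent to $\del_v M$ at those points (in the $0^-$-model, $g^{-1}(\min)$ is a single point of the flat face). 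Consequently the flow chart $\Phi(p)=(\text{forward exit point of }p,\ g(p))$ is not a diffeomorphism onto $\del_+M\times[0,1]$: a point of $\del_v M$ at which $\nabla g$ is tangent to the wall, but whose flow line later dives into the interior through the bump region, is sent by $\Phi$ to an \emph{interior} point of the cylinder, so $\Phi$ does not even carry $\del M$ into $\del(\del_+M\times[0,1])$; equivalently, backward flow from $\del_+M$ escapes through $\del_v M$ wherever you made the field inward, with exit times that jump discontinuously across flow lines tangent to the wall. A two-dimensional example isolates the failure: let $M=\{(u,v):0\le v\le 1,\ -1\le u\le w(v)\}$ with $w$ constant near $v=0$ and near $v=1$ and strictly increasing in between, with $g=v$ and field $\partial_v$. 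This field has every property on your list (nowhere zero, inward along the bottom and along the bent part of the wall, tangent near the corners, outward along the top), yet $\Phi$ is just the inclusion of $M$ as a proper subset of $\del_+M\times[0,1]$: the product structure exists but is not produced by the flow.

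What the argument still needs is the corner-straightening. Under any diffeomorphism $M\cong\del_+M\times[0,1]$ of cornered cobordisms, only a collar of $\del_+^2M$ inside $\del_v M$ can land on the side wall $\del(\del_+M)\times[0,1]$; the $k$-handles sitting in $\del_v M$ must be folded down into the bottom face $\del_+M\times\{0\}$, and a flow whose flow lines run \emph{along} the wall on the tangency locus cannot perform this fold. That straightening is precisely what the paper's short proof supplies, in either of two soft forms: dually, $M$ is built from $\ov{\del_+M}\times[0,1]$ by attaching positive half-handles, whose attaching regions are balls, so each attachment merely fattens a boundary collar; or directly, the identification $\hDk{n-k}\cong\Dk{n-k-1}\times[0,1]$ exhibits a $k^-$-half-handle as (its vertical-wall $k$-handle)$\times[0,1]$, which is the deformation retract onto the vertical handle that the paper invokes. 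Your outline becomes a proof once the flow-trivialization claim is replaced by such an argument, but as written you assert the step where the lemma actually lives.
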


\begin{proof}
This follows easily by dualizing
(i.e. taking the dual decomposition of)
the fact that attaching positive type half-handles
does not change the diffeomorphism type of the
relative cobordism
(because, as stated before, the attaching region for
positive type half-handles is a ball).
One can also prove this directly by
applying a deformation retract of
a negative half-handle onto the vertical handle within it.
\end{proof}

By considering dual handles, we see that
attaching a positive half-handle can be interpreted
as the identity cornered cobordism with $\del_-^2$
as corner.

\subsection{Half-handle Decomposition for Y-product}
\label{s:half-handle-decomp-Yprod}

Now let us consider the Y-product construction
from the perspective of (half-)handle decompositions.
Recall that the Y-product on $Q = M \cup_N \ov{M}$,
where $M$ is a manifold with boundary,
is obtained by gluing three copies of $M \times [-1,1]$
to $N \times \wdtld{\YY}$
(see \eqnref{e:Y-thick}).
Following that construction,
if we only glue on two copies of $M \times [-1,1]$,
say to $N \times I_1$ and $N \times I_2$,
the resulting manifold has two boundary components,
$M \cup_N N \times I_3 \cup_N \ov{M}$
and $M \cup_N \ov{M}$;
it is clearly the identity relative cobordism on $Q$.
Thus, most of the interesting topology happens
in $M \times I_3$,
here treated as a relative cobordism
from $\ov{M} \sqcup M$ to $N \times I$.
See \figref{f:Yprod-decomp} below.

\begin{figure}[h] 
\centering
\begin{tikzpicture}
\begin{scope}[shift={(-1.5,2)}]
\draw (-1,0) .. controls +(50:0.4cm) and +(180:0.6cm) ..
	(0.2,0.3) .. controls +(0:0.6cm) and +(50:0.4cm) ..
	(1,0) .. controls +(-130:0.4cm) and +(0:0.6cm) ..
	(-0.2,-0.3) .. controls +(180:0.6cm) and +(-130:0.4cm) ..
	(-1,0);
\draw[line width=0.2mm] (-0.3,-0.3) -- (0.1,0.3);
\draw[line width=0.2mm] (-0.1,-0.3) -- (0.3,0.3);
\end{scope}
\begin{scope}[shift={(1.5,2)}]
\draw (-1,0) .. controls +(50:0.4cm) and +(180:0.6cm) ..
	(0.2,0.3) .. controls +(0:0.6cm) and +(50:0.4cm) ..
	(1,0) .. controls +(-130:0.4cm) and +(0:0.6cm) ..
	(-0.2,-0.3) .. controls +(180:0.6cm) and +(-130:0.4cm) ..
	(-1,0);
\draw[line width=0.2mm] (-0.3,-0.3) -- (0.1,0.3);
\draw[line width=0.2mm] (-0.1,-0.3) -- (0.3,0.3);
\end{scope}
\begin{scope}[shift={(0,-2)}]
\draw (-1,0) .. controls +(50:0.4cm) and +(180:0.6cm) ..
	(0.2,0.3) .. controls +(0:0.6cm) and +(50:0.4cm) ..
	(1,0) .. controls +(-130:0.4cm) and +(0:0.6cm) ..
	(-0.2,-0.3) .. controls +(180:0.6cm) and +(-130:0.4cm) ..
	(-1,0);
\draw[line width=0.1mm] (-0.3,-0.3) -- (0.1,0.3);
\draw[line width=0.1mm] (-0.1,-0.3) -- (0.3,0.3);
\end{scope}
\begin{scope}[shift={(-1.5,0)}]
\draw[line width=0.1mm]
	(-1,0) .. controls +(50:0.4cm) and +(180:0.6cm) ..
	(0.2,0.3) --
	(3.2,0.3) .. controls +(0:0.6cm) and +(50:0.4cm) ..
	(4,0) .. controls +(-130:0.4cm) and +(0:0.6cm) ..
	(2.8,-0.3) --
	(-0.2,-0.3) .. controls +(180:0.6cm) and +(-130:0.4cm) ..
	(-1,0);
\end{scope}
\draw (-2.58,1.85) -- (-2.58,-0.15)
	.. controls +(-90:1cm) and +(90:1cm) ..
	(-1.08,-2.15);
\draw (2.58,2.15) -- (2.58,0.15)
	.. controls +(-90:1cm) and +(90:1cm) ..
	(1.08,-1.85);
\draw (-0.42,2.15) .. controls +(-90:0.5cm) and +(150:0.2cm) ..
	(0,1.2);
\draw (0.42,1.85) .. controls +(-90:0.4cm) and +(30:0.2cm) ..
	(0,1.2);
\begin{scope}[line width=0.2mm,shift={(-0.2,-0.3)}]
\draw (-1.4,2) -- (-1.4,0);
\draw (-1.6,2) -- (-1.6,0)
	.. controls +(-90:0.2cm) and +(180:0.2cm) ..
	(-1.4,-0.2) --
	(-0.1,-0.2) --
	(-0.1,-2);
\draw (1.4,2) -- (1.4,0);
\draw (1.6,2) -- (1.6,0)
	.. controls +(-90:0.2cm) and +(0:0.2cm) ..
	(1.4,-0.2) --
	(0.1,-0.2) --
	(0.1,-2);
\end{scope}
\begin{scope}[line width=0.1mm,shift={(0.2,0.3)}]
\draw (-1.4,2) -- (-1.4,0);
\draw (-1.6,2) -- (-1.6,0)
	.. controls +(-90:0.2cm) and +(180:0.2cm) ..
	(-1.4,-0.2) --
	(-0.1,-0.2) --
	(-0.1,-2);
\draw (1.4,2) -- (1.4,0);
\draw (1.6,2) -- (1.6,0)
	.. controls +(-90:0.2cm) and +(0:0.2cm) ..
	(1.4,-0.2) --
	(0.1,-0.2) --
	(0.1,-2);
\end{scope}
\node at (-3.5,0) {\small $M \times I_1$};
\node at (3.5,0) {\small $M \times I_2$};
\node at (0,2.5) {\small $M \times I_3$};
\draw[line width=0.1mm] (0,2.3) -- (-0.4,1.3);
\node at (3,-1.5) {\small $N \times \widetilde{\mathbf{Y}}$};
\draw[line width=0.1mm] (2.3,-1.3) -- (1.3,-0.4);
\end{tikzpicture}
\caption{}
\label{f:Yprod-decomp}
\end{figure}

Suppose we are given some half-handle decomposition
of $M$ as a relative cobordism from $\emptyset$ to $N$;
let the handles, in order of attachment,
be $H_1, H_2, \ldots, H_l$.
Lay out $M \times I_3$ like a ``folding fan'':
if $M$ is embedded in some half-space
$\RR_{\geq 0} \times \RR^l$ away from the boundary,
then sweeping the half-space through an extra dimension
will trace out
\[
M \times I_3 \simeq
\{(\cos \theta \cdot x, y, \sin \theta \cdot x) |
	(x,y) \in M \}
	\subset \RR \times \RR^l \times \RR
\]
Then it is clear that
$H_1 \times I_3, H_2 \times I_3, \ldots, H_l \times I_3$
give a half-handle decomposition of $M \times I_3$,
where each $H_i \times I_3$ is exactly one index higher
than $H_i$.

A half-handle decomposition for the counit
can be obtained by similar methods.
This time, we think of $M$ as a relative cobordism
from $N$ to $\emptyset$,
and properly embed $M$ in $\RR_{\geq 0} \times \RR^l$,
with $N$ in $\{0\} \times \RR^l$.
By sweeping through an extra dimension as before,
we will get the counit relative cobordism,
and each (half-)handle sweeps out a (half-)handle.

\subsection{Solid tori}
\label{s:topology-examples-torus}

We describe two examples of Y-products,
both are Y-products on the solid torus;
these will serve as the topological basis of
our main result.

Before that, let us fix some notation/conventions -
the large number of automorphisms of the solid torus
makes it necessary to fix particular choices
of curves/bases to avoid confusion;
for example, the seemingly innocuous automorphism
of ``flipping a donut'' actually implements
the ``charge conjugation'' operation on the Verlinde algebra
(see \eqnref{e:verlinde-solid-torus}).

Let $X$ be the solid torus $X = S^1 \times \Dk{2}$.
Choose $\vec{l} := S^1 \times \{*\}$,
$\vec{m} := \{*\} \times \del \Dk{2}$
to be the longitude and meridian, respectively.
orientations are chosen so that
$(\vec{n},\vec{l},\vec{m})$ is the orientation of $X$,
where $\vec{n}$ is the outward normal vector at the boundary.

Given a diagram of another solid torus,
we may represent an identification of that solid torus
with $X$ by drawing and labeling two simple closed curves
on that solid torus's boundary,
indicating which curve goes to $\vec{l}$ and $\vec{m}$.

\begin{example}
\label{x:solid-torus-S-matrix-topology}
Define $X'$ as $X$ after applying surgery along
the core circle with 0-framing
(trivial with respect to the product structure of $X$,
or equivalently,
$\vec{l}$ provides the framing);
by definition, there is a cornered cobordism
$\cH_2: X \to X'$, corresponding to the attachment
of a 2-handle to the core circle.

Now $X'$ is clearly another solid torus.
We define $\Phi: X' \simeq X$ to be the diffeomorphism
that sends $\vec{m}$ and $\vec{l}$
(as curves in $\del X' = \del X$)
to $\vec{l}$ and $-\vec{m}$ respectively.
Similarly,
we define $\ov{\Phi}: X' \simeq X$ to be the diffeomorphism
that sends $\vec{m}$ and $\vec{l}$
to $-\vec{l}$ and $\vec{m}$ respectively.

We denote the cornered cobordisms resulting from
composing $\Phi,\ov{\Phi}$ with $\cH_2$ by
$\Psi := \Phi \circ \cH_2 : X \to X$,
$\ov{\Psi} := \ov{\Phi} \circ \cH_2 : X \to X$
respectively.

Note that $\ov{\Psi}$ is the dual cobordism of $\Psi$,
and vice versa
(with appropriate choice of identification $\ov{X} \simeq X$).

If we identify the boundary $\del X$
with $\RR^2$,
such that $\vec{l} = (1\;\;0)^T$
and $\vec{m} = (0\;\;1)^T$,
then $\Psi|_{\del X}$ can be identified
with a clockwise $\pi/2$ rotation,
which has two fixed points $(0,0)$ and $(1/2,1/2)$,
and swaps $(1/2,0)$ and $(0,1/2)$.
Similarly, $\ov{\Psi}|_{\del X}$ can be identified
with an anti-clockwise $\pi/2$ rotation.
This will be useful for extending $\Psi$
to higher genus solid handlebodies
(see \secref{s:topology-examples-handlebody}).
\hfill $\triangle$
\end{example}

Now let us describe the two Y-products on $X$.

\begin{example}[Solid torus I]
\label{x:solid-torus-1}
Consider $M = \Dk{2} \times [0,1]$ as a relative cobordism
\[
(\Dk{2} \times [0,1], \del \Dk{2} \times [0,1]) :
(\emptyset, \emptyset) \to
(\Dk{2} \times \{0,1\}, \del \Dk{2} \times \{0,1\})
\]
which is half-way to building $X$;
we give it the following half-handle decomposition:
start with the $0^-$-half-handle,
followed by a $1^-$-half-handle,
and finally a $2$-handle
\footnote{Note that the last two steps can in fact
be replaced with a single $1^+$-half-handle;
we choose this half-handle decomposition
for use in computations later.}
(see \figref{f:solid-torus-half-1}).

To be more specific \footnotemark,
the $0^-$-half-handle
starts at the $(1/4,0)$ point on $\del X$.
Then the $1^-$-half-handle grows out of the $0^-$-half-handle
and closes up into a meridian.
Finally, the $2$-handle fills in the meridian.

\begin{figure}[h] 
\centering
\input{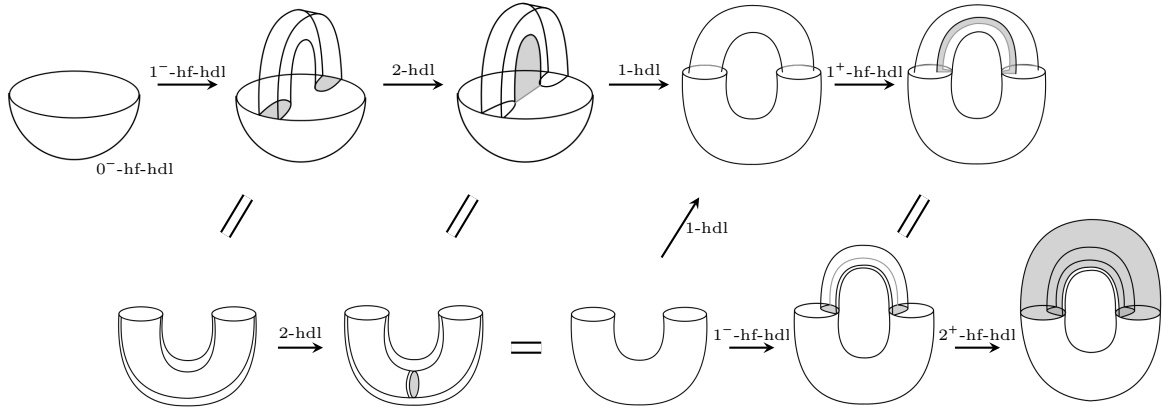}
\caption{Half-handle decomposition for $X$,
for $\YY_X^{(1)}$}
\label{f:solid-torus-half-1}
\end{figure}

This should define a Y-product on $X$,
which we denote $\YY_X^{(1)}$,
with the corresponding half-handle decomposition
on the Y-product on the solid torus $X$
as shown in \figref{f:solid-torus-Yprod-1} below.

\begin{figure}[h] 
\centering
\input{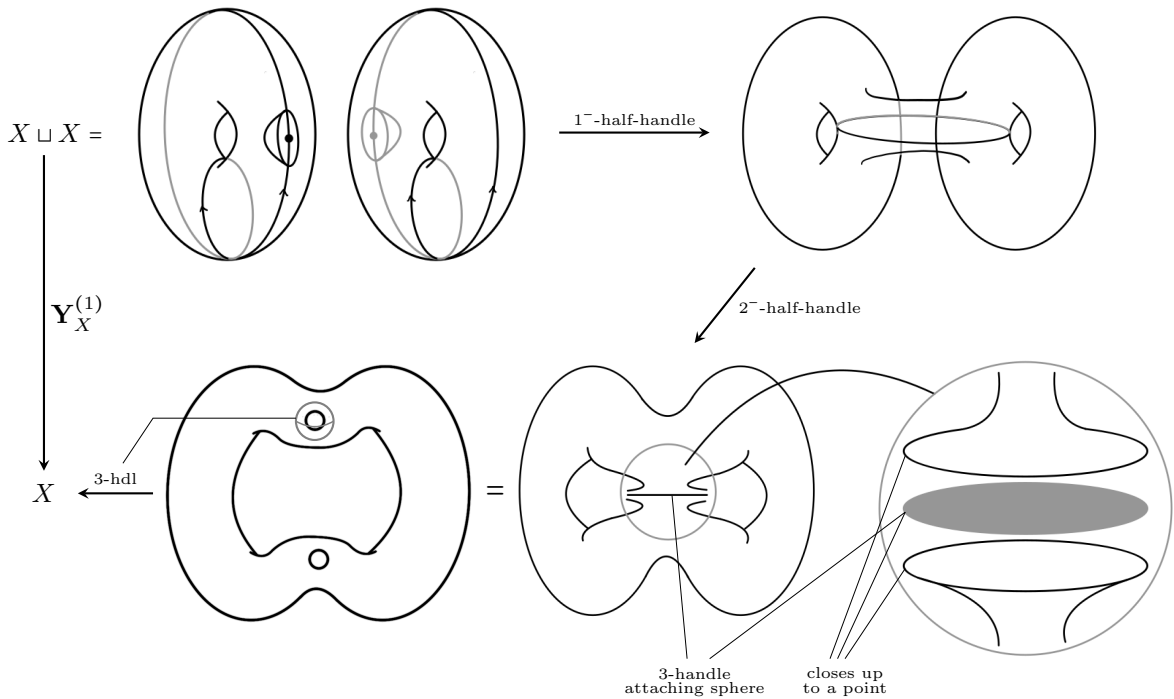}
\caption{
$\YY_X^{(1)}$ half-handle decomposition.
Note that the overall effect of the last two steps
($2^-$-half-handle and $3$-handle)
is to remove a solid cylinder,
or more precisely,
move the solid cylinder into the vertical boundary;
this is consistent with the idea that
the $2^-$- and $3$-(half)-handles
can be combined into one $2^+$-half-handle.
}
\label{f:solid-torus-Yprod-1}
\end{figure}


Here's a more wordy description of the half-handle decomposition
of $\YY_X^{(1)}$,
in terms of a surgery picture,
i.e. we keep track of just what happens to the 3-manifold
as we add half-handles.
Start with $X \sqcup X$.
The $1^-$-half-handle
cuts out two half 3-disks $\hDk{3}$ from the boundaries of $X$'s
(to be precise \footnotemark[\value{footnote}],
we take out a $\hDk{3}$ around
the $(1/4,0)$ point from the left $X$, and
a $\hDk{3}$ around the $(3/4,0)$ point from the right $X$),
and attaches a solid cylinder.
The boundary of this new 3-manifold $M_1$
is a genus two surface.

\footnotetext{This may seem overly pedantic,
as the boundary of $X$ is so symmetric
as to have no special points,
but these choices matter in the interaction between
$\Psi/\ov{\Psi}$ and the Y-products;
see \prpref{p:Psi-Yprod-equiv}.
It becomes particularly important when discussing
higher genus handlebodies in
\secref{s:topology-examples-handlebody},
as the Y-products are no longer symmetric.
}

Next,
the attaching sphere of the $2^-$-half-handle
is a circle on the boundary of $M_1$ (the genus two surface)
that travels along the meridians of the $X$'s
and goes back and forth along the solid cylinder just attached.
A neighborhood of the attaching sphere is removed,
and replaced with a thickened 2-disk,
obtaining $M_2$.

In terms of the boundary, $\del M_2$ is now already a torus.
Indeed, just focusing on what happens to the boundary surfaces,
we see that we have performed a Y-product on the torus.
However, there is one more handle, a 3-handle corresponding
to the 2-handle of $M$.
What is the attaching sphere of this 3-handle?
Observe that the attaching sphere of the previous
$2^-$-half-handle also bounds a 2-disk in $M_1$
(it goes through the solid cylinder).
The attaching sphere of the 3-handle is the union of
this 2-disk and the core of the $2^-$-half-handle.

Another way to view this Y-product is that it is simply
the usual pair of pants times $\Dk{2}$.
This is apparent from the construction
as described in \secref{s:basic-construction}.
In particular, we see that the vertical boundary
is the usual pair of pants times $S^1$.

A half-handle decomposition for the
counit for the corresponding Y-coproduct is given
in \figref{f:solid-torus-counit-1}.
\hfill $\triangle$
\end{example}

\begin{figure}[h] 
\centering
\begin{tikzpicture}
\node at (-3.5,0) {$X$};
\draw[->, line width=0.3mm] (-3.0,0) -- (-1.7,0);
\node at (-2.4,0.25) {\scriptsize $2^-$-hf-hdl};
\draw (0,0) ellipse (1.5cm and 1cm);
\draw (-0.4,0.1) to[out=-60,in=-120] (0.4,0.1);
\draw (-0.29,0) to[out=60,in=120] (0.29,0);
\draw[fill=black,opacity=0.2] (0,0) ellipse (0.3cm and 0.2cm);
\draw[line width=0.1mm] (-2,0.5) -- (-1.8,0.8) -- (0,0);
\node at (3.8,0.1) {$= \mathcal{D}^3
\xrightarrow{3^- \text{-hf-hdl}}
S^3
\xrightarrow{4 \text{-hdl}}
\emptyset
$};
\draw[->, line width=0.3mm] (2.3,-0.3) to[out=-30,in=-150] (5.5,-0.3);
\node at (3.9,-0.95) {\scriptsize $3^+$-hf-hdl};
\end{tikzpicture}
\caption{
Half-handle decomposition for counit for $\YY_X^{(1)}$.
The $2^-$-half-handle corresponds to the
$1^-$-half-handle in the bottom row in
\figref{f:solid-torus-half-1},
and the $3^+$-half-handle,
here shown to be broken into a $3^-$-half-handle
followed by a $4$-handle,
corresponds to the final $2^+$-half-handle
in \figref{f:solid-torus-half-1}.
}
\label{f:solid-torus-counit-1}
\end{figure}
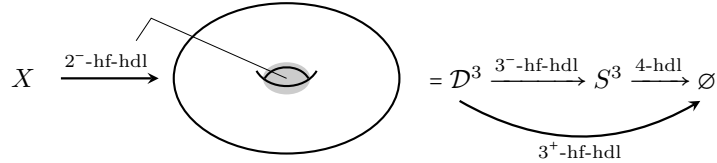

\begin{example}[Solid torus II]
\label{x:solid-torus-2}
Now let us consider the other way to have the solid torus
be the double of a manifold with corners.
Slicing the solid torus like a bagel,
we see that it is the double of
$\hDk{2} \times S^1$
(with corner $\{-1,1\} \times S^1$,
where $\{-1,1\}$ are the corners of $\hDk{2}$),
so that the two halves glue along an annulus.

This half-handle decomposition is simpler than
the previous one.
Indeed, it is clear from \figref{f:solid-torus-half-2}
that just taking the $0^-$- and $1^-$-half-handle
gives us a half-handle decomposition of $\hDk{2} \times S^1$.
Again, to be precise,
the $0^-$-half-handle starts at the $(0,3/4)$ point on $\del X$.
Then the $1^-$-half-handle grows out of the $0^-$-half-handle
and closes up into a longitude.

\begin{figure}[h] 
\centering
\input{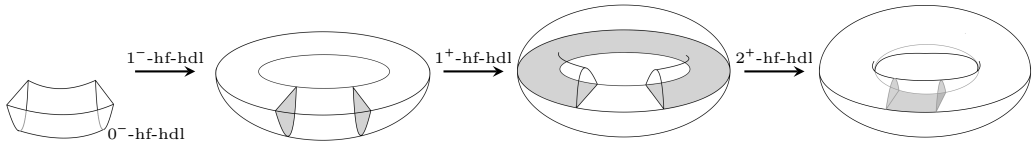}
\caption{Half-handle decomposition for $X$,
for $\YY_X^{(2)}$.
Note that in the third diagram,
the attaching region of the $1^+$-half-handle
is the shaded region,
the outgoing boundary is made of the two half-disks
facing each other,
and the vertical boundary is the top curved surface.
}
\label{f:solid-torus-half-2}
\end{figure}

The corresponding half-handle decomposition for $\YY_X^{(2)}$
is shown in \figref{f:solid-torus-Yprod-2}.
In terms of surgery on 3-manifolds,
for the $1^-$-half-handle,
we take out a $\hDk{3}$ around
the $(0,3/4)$ point from the left $X$, and
a $\hDk{3}$ around the $(0,1/4)$ point from the right $X$,
and attaches a solid cylinder.
Next, the attaching sphere of the $2^-$-half-handle
is a circle on the boundary
that travels along the longitudes of the $X$'s
and goes back and forth along the solid cylinder just attached.
A neighborhood of the attaching sphere is removed,
and replaced with a thickened 2-disk.

\begin{figure}[h] 
\centering
\input{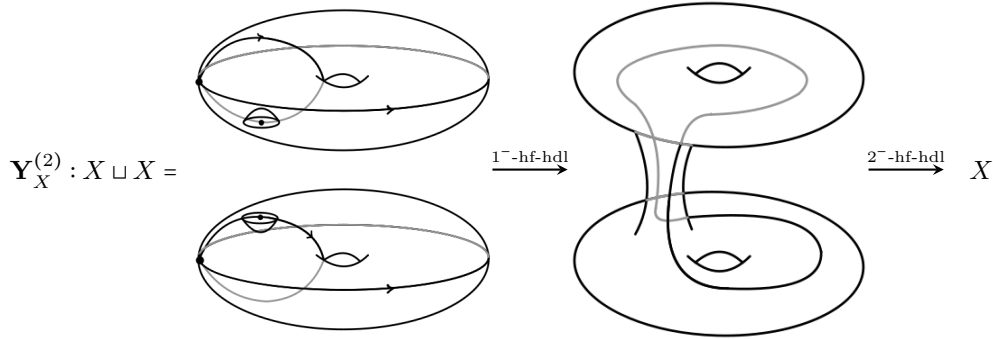}
\caption{$\YY_X^{(2)}$ half-handle decomposition}
\label{f:solid-torus-Yprod-2}
\end{figure}

The resulting Y-product is different from the previous example.
Indeed, we may view the solid torus as $I \times \text{Ann}$.
Then this Y-product is (Y-product on $I$) times Ann.
The Y-product on $I$ is homotopy equivalent to a point,
and thus this Y-product $\YY_X^{(2)}$
is homotopy equivalent to a circle,
whereas the previous Y-product $\YY_X^{(1)}$
is homotopy equivalent to a usual pair of pants.

Note that the vertical boundary of this Y-product
is also diffeomorphic to the usual pair of pants times $S^1$.

A half-handle decomposition for the
counit for the corresponding Y-coproduct is given
in \figref{f:solid-torus-counit-2} below.
\hfill $\triangle$
\end{example}

\begin{figure}[h] 
\centering
\begin{tikzpicture}
\node at (-2.2,0) {$X =$};
\draw (0,0) ellipse (1.5cm and 1cm);
\draw (-0.4,0.1) to[out=-60,in=-120] (0.4,0.1);
\draw (-0.29,0) to[out=60,in=120] (0.29,0);
\draw[fill=black,opacity=0.2] (0,1)
	.. controls +(-150:0.1cm) and +(90:0.2cm) .. (-0.1,0.55)
	.. controls +(-90:0.2cm) and +(150:0.1cm) .. (0,0.15)
	-- (0.1,0.15)
	.. controls +(30:0.1cm) and +(-90:0.2cm) .. (0.2,0.55)
	.. controls +(90:0.2cm) and +(-30:0.1cm) .. (0.1,1);
\draw[opacity=0.2] (0.1,0.15)
	.. controls +(150:0.1cm) and +(-90:0.2cm) .. (0,0.55)
	.. controls +(90:0.2cm) and +(-150:0.1cm) .. (0.1,1);
\node at (3.5,0) {
$\xrightarrow{2^+ \text{-hf-hdl}}
\mathcal{D}^3
\xrightarrow{3^+ \text{-hf-hdl}}
\emptyset
$};
\draw[line width=0.1mm] (0.1,0.55) -- (1.6,0.8) -- (1.8,0.4);
\end{tikzpicture}
\caption{
Half-handle decomposition for counit for $\YY_X^{(2)}$.
The $2^+$- and $3^+$-half-handles correspond to the
$1^+$- and $2^+$-half-handles in
\figref{f:solid-torus-half-2}.
}
\label{f:solid-torus-counit-2}
\end{figure}

\begin{remark}
\label{r:comment-two-decompositions}
We note that, as one might expect,
the two half-handle decompositions for $X$
can be related by handle cancellations and sliding.
As mentioned before, the $1^-$-half-handle and $2$-handle
in the first example can be merged/canceled
to get a $1^+$-half-handle,
so that the overall half-handle decomposition of the
solid torus consists of half-handles of index
$0^-, 1^+, 1^-, 2^+$ in that order.
In the second example, the half-handle decomposition consists of
half-handles of index $0^-, 1^-, 1^+, 2^+$ in that order.
It is easy to check that the $1^-$- and $1^+$-half-handles
can slide off each other
and be swapped in the order of handle attachments.
But this changes the resulting Y-products because
the manifold obtain from the first half of the handles
is changed.
\end{remark}

\begin{proposition}
\label{p:Psi-Yprod-equiv}
As relative cobordisms $X \sqcup X \to X$,
\begin{align*}
\YY_X^{(1)} \circ (\ov{\Psi} \sqcup \ov{\Psi})
&=
\ov{\Psi} \circ \YY_X^{(2)}
\\
\YY_X^{(1)} \circ (\Psi \sqcup \Psi)
&=
\Psi \circ \YY_X^{(2)} \circ P
\end{align*}
where $P$ swaps the two copies of $X$.
\end{proposition}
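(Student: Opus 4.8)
The plan is to reduce each identity, which is an equality of relative cobordisms (i.e.\ a diffeomorphism of total spaces compatible with the incoming and outgoing identifications), to two largely independent checks: first that the underlying cornered cobordisms are diffeomorphic, and second that the boundary identifications agree. The second check is the conceptual heart and is much the cleaner of the two, so I would carry it out first. Recall from \xmpref{x:solid-torus-1} and \xmpref{x:solid-torus-2} that $\YY_X^{(1)} \cong P \times \Dk{2}$ has its ``active'' pair-of-pants direction along the longitude $\vec{l}$, while $\YY_X^{(2)} \cong \YY_I \times \text{Ann}$ has its active direction along the meridian $\vec{m}$; and that by \xmpref{x:solid-torus-S-matrix-topology}, $\ov{\Psi}|_{\del X}$ (resp.\ $\Psi|_{\del X}$) is the anticlockwise (resp.\ clockwise) quarter rotation of $\del X \cong \RR^2/\ZZ^2$ exchanging $\vec{l}$ and $\vec{m}$ up to sign.

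The key bookkeeping is to track the distinguished basepoints at which the half-handles are attached, as the footnotes in \xmpref{x:solid-torus-1} and \xmpref{x:solid-torus-2} stress. In coordinates $(\text{longitude},\text{meridian})$, the left and right inputs of $\YY_X^{(2)}$ are marked at $(0,3/4)$ and $(0,1/4)$ on the meridian, while those of $\YY_X^{(1)}$ are marked at $(1/4,0)$ and $(3/4,0)$ on the longitude. Applying $\ov{\Psi}|_{\del X}\colon (x,y)\mapsto(-y,x)$ sends $(0,3/4)\mapsto(1/4,0)$ and $(0,1/4)\mapsto(3/4,0)$, so it carries the left/right inputs of $\YY_X^{(2)}$ to the left/right inputs of $\YY_X^{(1)}$ with no reordering; by contrast $\Psi|_{\del X}\colon (x,y)\mapsto(y,-x)$ sends $(0,3/4)\mapsto(3/4,0)$ and $(0,1/4)\mapsto(1/4,0)$, interchanging them. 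This is precisely the origin of the permutation $P$ in the second identity and its absence in the first, and it pins down the boundary identifications on both sides.

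For the diffeomorphism of the underlying cobordisms, I would build explicit half-handle decompositions of both sides by concatenating the decompositions already produced: the decomposition of $\YY_X^{(1)}$ (\figref{f:solid-torus-Yprod-1}) or of $\YY_X^{(2)}$ (\figref{f:solid-torus-Yprod-2}) with the $\cH_2$-plus-rotation decomposition of $\Psi,\ov{\Psi}$ from \xmpref{x:solid-torus-S-matrix-topology}, invoking \prpref{p:dual-decomp-half} wherever an input half-handle of the second factor must be read as an output half-handle of the composite. One then compares the two resulting handle presentations of a cobordism $X\sqcup X \to X$ and shows they are related by handle slides and by the introduction or cancellation of canceling pairs. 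The starting input for this is Remark~\ref{r:comment-two-decompositions}, which already records that the decompositions of $X$ underlying $\YY_X^{(1)}$ and $\YY_X^{(2)}$ differ only by swapping a $1^+$- and a $1^-$-half-handle; I expect the bulk of the work to consist of propagating this swap through the $\times I_3$ ``folding fan'' description of the Y-product decomposition in \secref{s:half-handle-decomp-Yprod}.

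The \emph{main obstacle} is reconciling the surgeries. The left-hand side attaches the $2$-handle of $\cH_2$ to \emph{each} of the two inputs, whereas the right-hand side attaches a single $\cH_2$ to the output. I would resolve this by sliding the two input $2$-handles, which are attached along the two input longitude cores of $P\times\Dk{2}$, across the pair-of-pants region, using that the two input cores cobound the output core in $P$; after handle slides and a canceling pair (supplied by the extra half-handle content distinguishing the two decompositions of $X$) they become equivalent to the single output $2$-handle of $\ov{\Psi}\circ\YY_X^{(2)}$, with the filling meridian disks of $\Dk{2}$ giving the room to perform the slides. The framing and orientation conventions $\vec{l}\mapsto-\vec{m}$ for $\Psi$ versus $\vec{m}\mapsto-\vec{l}$ for $\ov{\Psi}$ must be carried through this step, since they are exactly what distinguish the two identities and force $P$ onto the correct side. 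Once the handle presentations are matched and shown to restrict to the boundary identifications computed above, both equalities follow.
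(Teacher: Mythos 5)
Your proposal is correct and takes essentially the same route as the paper: your coordinate bookkeeping (that $\ov{\Psi}$ carries the marked attaching data of $\YY_X^{(2)}$ to that of $\YY_X^{(1)}$ input-by-input, while $\Psi$ rotates the other way and interchanges the inputs, forcing $P$) is exactly the paper's observation about the attaching regions, and your handle-slide-plus-cancellation step is the paper's ``simple 2--3 handle cancellation.'' The only refinement worth noting is that the canceling partner is most directly identified as the $3$-handle of $\YY_X^{(1)}$ (precisely the content distinguishing it from $\YY_X^{(2)}$ as a cornered cobordism), which cancels one of the two input $2$-handles and leaves the other to become the single output $2$-handle of $\ov{\Psi}$.
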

\begin{proof}
\figref{f:Psi-Yprod-equiv}
proves the first equality.
They are equal as relative cobordisms by a simple
2-3 handle cancellation.
We just note that in the left diagram,
the attaching region for the
$1^-$- and $2^-$-half-handles of $\YY_X^{(1)}$
looks like that of $\YY_X^{(2)}$
because of $\ov{\Psi}$.

The second equality needs an extra $P$
because $\Psi$ turns $\del X$ the other way
compared to $\ov{\Psi}$,
so the diagram for $\YY_X^{(1)} \circ (\Psi \sqcup \Psi)$
would look like the left diagram
in \figref{f:Psi-Yprod-equiv},
but the second $X$ would be stacked on top of the first.

\begin{figure}[h] 
\centering
\input{diagram-Psi-Y1-Y2.tikz}
\caption{
$\YY_X^{(1)} \circ (\ov{\Psi} \sqcup \ov{\Psi})
= \ov{\Psi} \circ \YY_X^{(2)}$
}
\label{f:Psi-Yprod-equiv}
\end{figure}

\end{proof}

\begin{proposition}
\label{p:Psi-Yprod-equiv-rev}
Let $K$ be the cornered cobordism $K: X \to X$
given by attaching two 2-handles as in
\figref{f:solid-torus-K}.
Then as relative cobordisms $X \sqcup X \to X$,
\begin{align*}
\YY_X^{(2)} \circ (\Psi \sqcup \Psi)
&=
\Psi \circ \YY_X^{(1)} \circ (K \sqcup \id_X)
=
\Psi \circ \YY_X^{(1)} \circ (\id_X \sqcup K)
\\
\YY_X^{(2)} \circ (\ov{\Psi} \sqcup \ov{\Psi}) \circ P
&=
\ov{\Psi} \circ \YY_X^{(1)} \circ (K \sqcup \id_X)
=
\ov{\Psi} \circ \YY_X^{(1)} \circ (\id_X \sqcup K)
\end{align*}
\end{proposition}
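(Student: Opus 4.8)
The plan is to prove these relations as the ``backwards'' companions of \prpref{p:Psi-Yprod-equiv}: that proposition rewrites $\YY_X^{(1)}$-compositions in terms of $\YY_X^{(2)}$, and here the roles of the two Y-products are exchanged, the price of the exchange being the cobordism $K$. I would establish them by exactly the method used for \prpref{p:Psi-Yprod-equiv}, namely drawing the half-handle decompositions of the two sides and matching them by handle slides and a $2$--$3$ handle cancellation, so the real work is isolating the two genuinely new features: why $K$ appears and why a single copy, on either input, suffices. The source of $K$ is the key lemma I would record first: a composite such as $\ov{\Psi}\circ\Psi$ is a cornered cobordism $X\to X$ assembled from the two copies of $\cH_2$ hidden in $\Psi$ and $\ov{\Psi}$, and since $\Psi|_{\del X}$ and $\ov{\Psi}|_{\del X}$ are inverse $\pi/2$-rotations (\xmpref{x:solid-torus-S-matrix-topology}), this composite is the identity on $\del X$ while carrying two $2$-handles in the bulk; that is precisely the cobordism $K$ of \figref{f:solid-torus-K}.

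I would dispose of the two input-equalities first, since they are independent of the main computation. The $1^-$-half-handle that forms the Y-junction (\figref{f:solid-torus-Yprod-1}) is symmetric in the two incoming copies of $X$, so the two $2$-handles constituting $K$ on the first input can be slid across the junction onto the second; this pure handle-slide gives
$\Psi\circ\YY_X^{(1)}\circ(K\sqcup\id_X)=\Psi\circ\YY_X^{(1)}\circ(\id_X\sqcup K)$,
and likewise for the $\ov{\Psi}$-line.

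For the main equality I would lay out both sides as half-handle decompositions. The essential structural input is that $\YY_X^{(1)}$ and $\YY_X^{(2)}$ differ exactly by one extra $3$-handle in $\YY_X^{(1)}$ (compare \figref{f:solid-torus-Yprod-1}, \figref{f:solid-torus-Yprod-2}, and \ref{r:comment-two-decompositions}). On the right-hand side $\Psi\circ\YY_X^{(1)}\circ(K\sqcup\id_X)$, the two $2$-handles of $K$ together with this extra $3$-handle reorganize---one copy of $K$ being removed by a $2$--$3$ handle cancellation, the other surviving---into precisely the decomposition of the left-hand side $\YY_X^{(2)}\circ(\Psi\sqcup\Psi)$, in the same spirit as \figref{f:Psi-Yprod-equiv}. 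A convenient check that the counting is right comes from precomposing the first relation of \prpref{p:Psi-Yprod-equiv} with $\Psi\sqcup\Psi$ and using $\ov{\Psi}\circ\Psi\simeq K$, which turns its left side into $\YY_X^{(1)}\circ(K\sqcup K)$; the reduction $K\sqcup K\leadsto K\sqcup\id_X$ under $\YY_X^{(1)}$ is exactly the absorption effected by the extra $3$-handle. The second line is obtained identically from the second relation of \prpref{p:Psi-Yprod-equiv}, with the swap $P$ surviving for the same reason it does there, namely that $\Psi$ turns $\del X$ the opposite way from $\ov{\Psi}$.

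The step I expect to be the main obstacle is the bookkeeping of $K$: the naive composition produces a copy of $K$ on each incoming factor, and one must verify that precisely one copy survives the $2$--$3$ cancellation and that it may be placed on either input. Both points depend on tracking the framings carefully---the $0$-framing of $\cH_2$ and the clockwise/anticlockwise rotations distinguishing $\Psi$ from $\ov{\Psi}$---so I would carry out this reduction explicitly at the level of the figures rather than formally, exactly as in the proof of \prpref{p:Psi-Yprod-equiv}.
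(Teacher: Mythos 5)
Your overall plan --- recognize $K$ as $\ov{\Psi}\circ\Psi$, then compare explicit half-handle/surgery diagrams of the two sides and match them through a $2$--$3$ handle cancellation in the spirit of \figref{f:Psi-Yprod-equiv} --- is the route the paper takes: it draws $\YY_X^{(1)}\circ(K\sqcup\id_X)$ and $\YY_X^{(2)}\circ(\Psi\sqcup\Psi)$ as handle pictures (Figures \ref{f:solid-torus-K-Yprod-1}, \ref{f:solid-torus-K-Yprod-2}), cancels handles, notes that the same half-handles are added in both composites, and observes that the final diagrams differ by exactly one $\Psi$. However, there is a genuine error at exactly the step you single out as the crux. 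You claim that precomposing \prpref{p:Psi-Yprod-equiv} with $\Psi\sqcup\Psi$ turns its left side into $\YY_X^{(1)}\circ(K\sqcup K)$ (true), and then that ``the reduction $K\sqcup K\leadsto K\sqcup\id_X$ under $\YY_X^{(1)}$ is exactly the absorption effected by the extra $3$-handle'' (false). No such reduction exists: $K$ consists of two $2$-handles, and the single $3$-handle of $\YY_X^{(1)}$ can cancel only one $2$-handle, never a whole copy of $K$. Invariantly, $\ZCY(K)$ is multiplication by $\cD\vphi_J$ (a factor of $\cD$ in the modular case), so $\ZCY$ distinguishes $\YY_X^{(1)}\circ(K\sqcup K)$ from $\YY_X^{(1)}\circ(K\sqcup\id_X)$ whenever $\cD\neq 1$; their Euler characteristics also differ by $2$, so they are not equal as cobordisms. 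What is actually true, and what your precomposition argument needs, is $\YY_X^{(1)}\circ(K\sqcup K)=K\circ\YY_X^{(1)}\circ(K\sqcup\id_X)$: the surplus copy of $K$ slides through the product onto the \emph{outgoing} leg; it does not vanish. Since $K$ is not invertible it can never be cancelled from such an identity --- this is precisely why the proposition must carry the outer $\Psi$, $\ov{\Psi}$. The same confusion infects the wording of your main argument (``one copy of $K$ being removed by a $2$--$3$ handle cancellation, the other surviving''): in $\YY_X^{(1)}\circ(K\sqcup\id_X)$, what cancels against the $3$-handle is one of the two $2$-handles of the single $K$, and what survives is its \emph{other} $2$-handle, as in \figref{f:solid-torus-K-Yprod-1}.

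A secondary imprecision: your ``essential structural input,'' that $\YY_X^{(1)}$ and $\YY_X^{(2)}$ ``differ exactly by one extra $3$-handle,'' is not correct as a statement about the Y-products themselves; their half-handles attach along different curves (meridional versus longitudinal), which is exactly why the two Y-products are different manifolds (Remark \ref{r:comment-two-decompositions}). The handle-by-handle comparison only becomes available after the inputs are twisted: $K$ on one side and $\Psi\sqcup\Psi$ on the other rotate the boundary identifications so that the $1^-$- and $2^-$-half-handles of the two composites attach along the same curves --- this is the point the paper's proof makes explicitly (``the same half-handles are added in both figures''), and it is where the specific definitions of $\Phi,\ov{\Phi}$ enter, so your diagram matching must verify it rather than assume it. With these two corrections your figure-based plan coincides with the paper's proof; and the formal derivation your cross-check gestures at is carried out correctly --- via the Y-coproduct relation \prpref{p:Psi-Ycoprod-equiv} and Frobenius moves, with $K$ sliding rather than cancelling --- in Remark \ref{r:frobenius-topology}, \eqnref{e:K-Psi-Y1-Y2}.
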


\begin{proof}
In Figures \ref{f:solid-torus-K-Yprod-1},
\ref{f:solid-torus-K-Yprod-2},
half-handles are only added in the second step
(at $\YY_X^{(1)},\YY_X^{(2)}$),
and the same half-handles are added in both figures.
Thus, from the final diagrams,
it is clear that performing an extra $\Psi$
in \figref{f:solid-torus-K-Yprod-1}
results in \figref{f:solid-torus-K-Yprod-2}.

\begin{figure}[h] 
\centering
\begin{tikzpicture}
\node at (-3.5,0) {$K := \Psi \circ \overline{\Psi} =
\overline{\Psi} \circ \Psi =$};
\draw (0,0) ellipse (1.5cm and 1cm);
\draw (-0.2,0.1) to[out=-60,in=-120] (0.2,0.1);
\draw (-0.1,0) to[out=60,in=120] (0.1,0);
\draw (0,0) ellipse (0.8cm and 0.5cm);
\draw[overline] (-0.8,0) circle (0.2cm);
\draw[overline] (0.8,0) arc (0:-180:0.8cm and 0.5cm);
\node at (-2,0.8) {\tiny 2-handle};
\draw[line width=0.1mm] (-1.5,0.6) -- (-1,0.1);
\draw[line width=0.1mm] (-1.4,0.8) -- (-0.5,0.4);
\end{tikzpicture}
\caption{
Cornered cobordism $K: X \to X$;
as a 4-manifold, $K$ is $\Dk{2} \times S^2$.
}
\label{f:solid-torus-K}
\end{figure}

\begin{figure}[h] 
\centering
\input{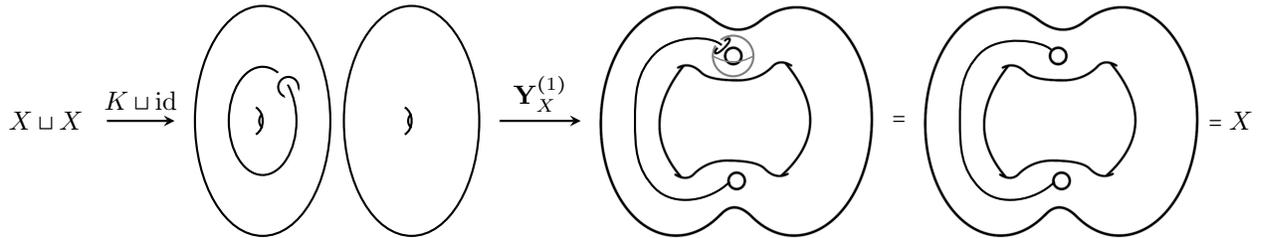}
\caption{
$\YY_X^{(1)} \circ (K \sqcup \id_X)$.
In the penultimate equality,
the 2- and 3-handles cancel.
}
\label{f:solid-torus-K-Yprod-1}
\end{figure}

\begin{figure}[h] 
\centering
\input{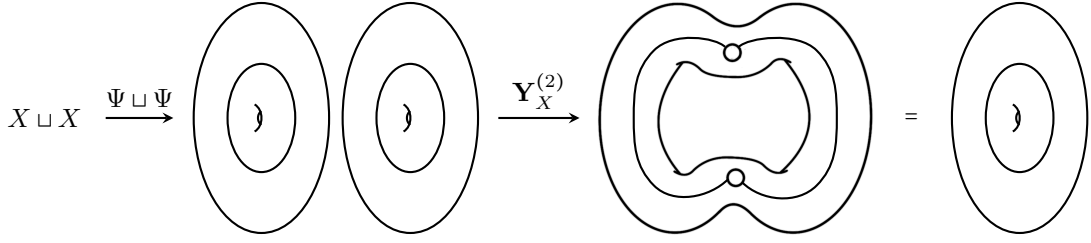}
\caption{
$\YY_X^{(2)} \circ (\Psi \sqcup \Psi)$.
In the last equality, the 1-handle is canceled by
one of the 2-handles.
}
\label{f:solid-torus-K-Yprod-2}
\end{figure}

\end{proof}

We also consider similar results for the coproducts,
which follow simply from dualizing the picture:

\begin{proposition}
\label{p:Psi-Ycoprod-equiv}
As relative cobordisms $X \to X \sqcup X$,
\begin{align*}
(\Psi \sqcup \Psi) \circ \coYY_X^{(1)} 
&=
\coYY_X^{(2)} \circ \Psi
\\
(\ov{\Psi} \sqcup \ov{\Psi}) \circ \coYY_X^{(1)} 
&=
P \circ \coYY_X^{(2)} \circ \ov{\Psi}
\end{align*}
where $P$ swaps the two copies of $X$.
\end{proposition}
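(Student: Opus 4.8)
The plan is to derive both identities by dualizing the two equalities of \prpref{p:Psi-Yprod-equiv}, so that no half-handle pictures need to be redrawn. The structural facts I would invoke are: (i) by construction the Y-coproduct $\coYY_X^{(i)}$ is the dual cobordism of the Y-product $\YY_X^{(i)}$; (ii) as recorded in \xmpref{x:solid-torus-S-matrix-topology}, $\ov{\Psi}$ is the dual cobordism of $\Psi$ and vice versa, under a fixed identification $\ov{X} \simeq X$; (iii) the swap $P$ is self-dual; and (iv) passing to the dual handle decomposition (\defref{d:dual-decomp-hf}, \prpref{p:dual-decomp-half}) sends a composite $f \circ g$ to $g^\ast \circ f^\ast$ and a disjoint union $f \sqcup g$ to $f^\ast \sqcup g^\ast$.

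First I would dualize the first equation of \prpref{p:Psi-Yprod-equiv}, namely $\YY_X^{(1)} \circ (\ov{\Psi} \sqcup \ov{\Psi}) = \ov{\Psi} \circ \YY_X^{(2)}$. Applying $(-)^\ast$ and using (iv) to reverse the order, the left-hand side becomes $(\ov{\Psi} \sqcup \ov{\Psi})^\ast \circ (\YY_X^{(1)})^\ast = (\Psi \sqcup \Psi) \circ \coYY_X^{(1)}$, while the right-hand side becomes $(\YY_X^{(2)})^\ast \circ \ov{\Psi}^\ast = \coYY_X^{(2)} \circ \Psi$; this is the first claimed identity. Dualizing the second equation $\YY_X^{(1)} \circ (\Psi \sqcup \Psi) = \Psi \circ \YY_X^{(2)} \circ P$ in the same way gives $(\ov{\Psi} \sqcup \ov{\Psi}) \circ \coYY_X^{(1)} = P \circ \coYY_X^{(2)} \circ \ov{\Psi}$, where the factor $P$ migrates from the right end of the product side to the left end of the coproduct side under the order reversal, and remains $P$ because it is self-dual. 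This is the second claimed identity.

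The only genuine content is to make precise that the bookkeeping in (iv) is compatible with the dualization of relative cobordisms and respects the orientation and corner conventions of \defref{relative-cobordism}. Concretely, I would verify that taking the dual of a half-handle decomposition --- swapping $\del_-$ with $\del_+$ and each $k^\pm$-half-handle with an $(n-1-k)^\mp$-half-handle --- realizes $\YY_X^{(i)} : X \sqcup X \to X$ as $\coYY_X^{(i)} : X \to X \sqcup X$ on the same underlying manifold, and that under the chosen identification $\ov{X} \simeq X$ this carries the dual of $\Psi$ to $\ov{\Psi}$. Once these identifications are confirmed, both equalities follow formally, with no handle slides or cancellations beyond those already performed for \prpref{p:Psi-Yprod-equiv}.

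The step I expect to be the main obstacle is the orientation tracking around (ii) and (iii): confirming that the identification $\ov{X} \simeq X$ implicit in ``$\ov{\Psi}$ is dual to $\Psi$'' is the same one used in \prpref{p:Psi-Yprod-equiv}, and that the dual of the swap $P$ is genuinely $P$ rather than $P$ twisted by a charge-conjugation automorphism. If these conventions are set consistently the argument is a purely formal transpose; if not, one risks an extra charge conjugation sneaking in, exactly the kind of subtlety flagged in \secref{s:topology-examples-torus}.
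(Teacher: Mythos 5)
Your proposal is correct and is exactly the paper's argument: the paper introduces \prpref{p:Psi-Ycoprod-equiv} with the remark that the coproduct identities ``follow simply from dualizing the picture,'' i.e.\ by applying the dual-cobordism operation to both equations of \prpref{p:Psi-Yprod-equiv}, using that $\coYY_X^{(i)}$ is dual to $\YY_X^{(i)}$, that $\ov{\Psi}$ is dual to $\Psi$ (as recorded in \xmpref{x:solid-torus-S-matrix-topology}), that $P$ is self-dual, and that dualization reverses composition. Your bookkeeping of how the order reversal moves $P$ from one end to the other matches the stated equalities, so nothing further is needed.
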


\begin{proposition}
\label{p:Psi-Ycoprod-equiv-rev}
Let $K$ be the cornered cobordism $K: X \to X$
given by attaching two 2-handles as in
\figref{f:solid-torus-K}.
Then as relative cobordisms $X \sqcup X \to X$,
\begin{align*}
(\ov{\Psi} \sqcup \ov{\Psi}) \circ \coYY_X^{(2)}
&=
(K \sqcup \id_X) \circ \coYY_X^{(1)} \circ \ov{\Psi} 
=
(\id_X \sqcup K) \circ \coYY_X^{(1)} \circ \ov{\Psi} 
\\
P \circ (\Psi \sqcup \Psi) \circ \coYY_X^{(2)}
&=
(K \sqcup \id_X) \circ \coYY_X^{(1)} \circ \Psi
=
(\id_X \sqcup K) \circ \coYY_X^{(1)} \circ \Psi
\end{align*}
where $P$ swaps the two copies of $X$.
\end{proposition}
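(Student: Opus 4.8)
The plan is to derive every equality in \prpref{p:Psi-Ycoprod-equiv-rev} by applying the dual-cobordism operation to the corresponding equality in \prpref{p:Psi-Yprod-equiv-rev}, in exactly the same way that \prpref{p:Psi-Ycoprod-equiv} arises as the dual of \prpref{p:Psi-Yprod-equiv}. The key point is that passing to the dual cobordism is contravariant: it reverses the order of composition, $(g \circ f)^* = f^* \circ g^*$, commutes with disjoint union, $(f \sqcup g)^* = f^* \sqcup g^*$, and sends each elementary piece of our diagrams to its dual partner. The dictionary we need is: $(\YY_X^{(i)})^* = \coYY_X^{(i)}$, which is just the definition of the Y-coproduct as the dual of the Y-product; $\Psi^* = \ov{\Psi}$ and $\ov{\Psi}^* = \Psi$, as recorded in \xmpref{x:solid-torus-S-matrix-topology}; $P^* = P$ and $\id_X^* = \id_X$, since the swap and the identity are self-dual; and $K^* = K$, which follows from the factorization $K = \Psi \circ \ov{\Psi} = \ov{\Psi} \circ \Psi$ via $K^* = \ov{\Psi}^* \circ \Psi^* = \Psi \circ \ov{\Psi} = K$, or equivalently from the symmetric description $K \cong \Dk{2} \times S^2$ of \figref{f:solid-torus-K}.

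Granting this dictionary, the four equalities become formal. For the first line I would dualize the first line of \prpref{p:Psi-Yprod-equiv-rev}. The left-hand side $\YY_X^{(2)} \circ (\Psi \sqcup \Psi)$ dualizes to $(\Psi \sqcup \Psi)^* \circ (\YY_X^{(2)})^* = (\ov{\Psi} \sqcup \ov{\Psi}) \circ \coYY_X^{(2)}$, and $\Psi \circ \YY_X^{(1)} \circ (K \sqcup \id_X)$ dualizes to $(K \sqcup \id_X) \circ \coYY_X^{(1)} \circ \ov{\Psi}$ (and the $\id_X \sqcup K$ variant similarly), giving
\[
(\ov{\Psi} \sqcup \ov{\Psi}) \circ \coYY_X^{(2)}
= (K \sqcup \id_X) \circ \coYY_X^{(1)} \circ \ov{\Psi}
= (\id_X \sqcup K) \circ \coYY_X^{(1)} \circ \ov{\Psi}.
\]

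For the second line I would dualize the second line of \prpref{p:Psi-Yprod-equiv-rev}. Here the extra swap $P$ sits at the far right of $\YY_X^{(2)} \circ (\ov{\Psi} \sqcup \ov{\Psi}) \circ P$; because dualization reverses the order of composition and $P^* = P$, this $P$ migrates to the far left, producing $P \circ (\Psi \sqcup \Psi) \circ \coYY_X^{(2)}$, which is exactly the stated left-hand side. The right-hand side dualizes as in the first line, yielding $(K \sqcup \id_X) \circ \coYY_X^{(1)} \circ \Psi$ and $(\id_X \sqcup K) \circ \coYY_X^{(1)} \circ \Psi$.

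The genuine content, and the step I expect to demand the most care, is not the algebraic manipulation but the orientation and identification bookkeeping that underwrites the duality dictionary. Specifically, one must verify that the identification $\ov{X} \simeq X$ implicit in the statement ``$\ov{\Psi}$ is the dual of $\Psi$'' is the very identification used to regard each $\coYY_X^{(i)}$ as a cobordism $X \to X \sqcup X$, and that $K$ is genuinely self-dual with respect to it; the parenthetical ``with appropriate choice of identification $\ov{X} \simeq X$'' in \xmpref{x:solid-torus-S-matrix-topology} is what fixes this choice. Once these identifications are pinned down consistently -- as they already are for \prpref{p:Psi-Ycoprod-equiv} -- all four equalities follow at once from applying the contravariant dual to \prpref{p:Psi-Yprod-equiv-rev}.
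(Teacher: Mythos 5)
Your proposal is correct and takes essentially the same route as the paper: the paper introduces \prpref{p:Psi-Ycoprod-equiv} and \prpref{p:Psi-Ycoprod-equiv-rev} with only the remark that they ``follow simply from dualizing the picture,'' i.e., by applying the contravariant dual-cobordism operation to \prpref{p:Psi-Yprod-equiv} and \prpref{p:Psi-Yprod-equiv-rev}, exactly as you do. Your write-up actually supplies more detail than the paper does --- the explicit dictionary $(\YY_X^{(i)})^* = \coYY_X^{(i)}$, $\Psi^* = \ov{\Psi}$, $K^* = K$, and the caveat about fixing the identification $\ov{X} \simeq X$ consistently --- all of which the paper leaves implicit.
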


\begin{remark}
\label{r:frobenius-topology}
\prpref{p:Psi-Yprod-equiv-rev} essentially formally
follows from \prpref{p:Psi-Ycoprod-equiv}
(which in turn essentially formally follows from
\prpref{p:Psi-Yprod-equiv}).
Represent the first equation in \prpref{p:Psi-Ycoprod-equiv}
as follows:
\begin{equation}
\begin{tikzpicture}
\node[dotnode] (a) at (0,0) {};
\node at (0.125,0.1) {\tiny $1$};
\draw (a) -- (0,0.5);
\draw (a) -- (-0.4, -0.3);
\draw (a) -- (0.4, -0.3);
\node[dotnode] at (-0.2,-0.15) {};
\node[dotnode] at (0.2,-0.15) {};
\node at (-0.3,-0.05) {\tiny $\Psi$};
\node at (0.3,-0.05) {\tiny $\Psi$};
\end{tikzpicture}
=
\begin{tikzpicture}
\node[dotnode] (a) at (0,0) {};
\node at (0.15,0.05) {\tiny $2$};
\draw (a) -- (0,0.5);
\draw (a) -- (-0.4, -0.3);
\draw (a) -- (0.4, -0.3);
\node[dotnode] at (0,0.25) {};
\node at (0.15,0.3) {\tiny $\Psi$};
\end{tikzpicture}
\end{equation}

Then, we have
\begin{equation}
\label{e:K-Psi-Y1-Y2}
\begin{tikzpicture}
\node[dotnode] (a) at (0,0) {};
\draw (a) -- (0,-0.5);
\draw (a) -- (-0.4,0.3);
\draw (a) -- (0.4,0.3);
\node[dotnode] at (-0.2,0.15) {};
\node[dotnode] at (0,-0.35) {};
\node at (0.13,-0.1) {\tiny 1};
\node at (-0.25,0) {\tiny $K$};
\node at (0.15,-0.35) {\tiny $\Psi$};
\end{tikzpicture}
=
\begin{tikzpicture}
\node[dotnode] (a) at (0,0) {};
\draw (a) -- (0,0.3);
\draw (a) .. controls +(-150:0.3cm) and +(90:0.2cm) ..
	(-0.3,-0.3) .. controls +(-90:0.3cm) and +(-90:0.3cm) ..
	(-0.6,-0.3) -- (-0.6,0.3);
\draw (a)  .. controls +(-30:0.3cm) and +(90:0.2cm) ..
	(0.3,-0.3) -- (0.3,-0.6);
\node[dotnode] at (-0.6,0.15) {};
\node[dotnode] at (-0.6,-0.15) {};
\node[dotnode] at (0.3,-0.5) {};
\node at (-0.75,0.15) {\tiny $\Psi$};
\node at (-0.75,-0.15) {\tiny $\ov{\Psi}$};
\node at (0.45,-0.5) {\tiny $\Psi$};
\node at (0.1,0.1) {\tiny 1};
\end{tikzpicture}
=
\begin{tikzpicture}
\node[dotnode] (a) at (0,0) {};
\draw (a) -- (0,0.3);
\draw (a) .. controls +(-150:0.3cm) and +(90:0.2cm) ..
	(-0.3,-0.3) .. controls +(-90:0.3cm) and +(-90:0.3cm) ..
	(-0.6,-0.3) -- (-0.6,0.3);
\draw (a)  .. controls +(-30:0.3cm) and +(90:0.2cm) ..
	(0.3,-0.3) -- (0.3,-0.6);
\node[dotnode] at (-0.6,0.15) {};
\node[dotnode] at (-0.2,-0.1) {};
\node[dotnode] at (0.2,-0.1) {};
\node at (-0.75,0.15) {\tiny $\Psi$};
\node at (-0.3,0) {\tiny $\Psi$};
\node at (0.15,-0.25) {\tiny $\Psi$};
\node at (0.1,0.1) {\tiny 1};
\end{tikzpicture}
=
\begin{tikzpicture}
\node[dotnode] (a) at (0,0) {};
\draw (a) -- (0,0.3);
\draw (a) .. controls +(-150:0.3cm) and +(90:0.2cm) ..
	(-0.3,-0.3) .. controls +(-90:0.3cm) and +(-90:0.3cm) ..
	(-0.6,-0.3) -- (-0.6,0.3);
\draw (a)  .. controls +(-30:0.3cm) and +(90:0.2cm) ..
	(0.3,-0.3) -- (0.3,-0.6);
\node[dotnode] at (-0.6,0.15) {};
\node[dotnode] at (0,0.15) {};
\node at (-0.75,0.15) {\tiny $\Psi$};
\node at (0.15,0.15) {\tiny $\Psi$};
\node at (0,-0.15) {\tiny 2};
\end{tikzpicture}
=
\begin{tikzpicture}
\node[dotnode] (a) at (0,0) {};
\draw (a) -- (0,-0.5);
\draw (a) -- (-0.4,0.3);
\draw (a) -- (0.4,0.3);
\node[dotnode] at (-0.2,0.15) {};
\node[dotnode] at (0.2,0.15) {};
\node at (0.1,-0.1) {\tiny 2};
\node at (-0.35,0.1) {\tiny $\Psi$};
\node at (0.35,0.1) {\tiny $\Psi$};
\end{tikzpicture}
\end{equation}

\end{remark}


\subsection{Solid handlebodies}
\label{s:topology-examples-handlebody}

We can generalize the discussion above to
handlebodies of higher genus.
We fix a specific genus $g$ solid handlebody,
denoted $X_g$,
constructed from $g$ copies of $X$,
with a 1-handle joining the
$(1/2,1/2)$ point on the $i$-th $\del X$
to the $(0,0)$ point on the $(i+1)$-st $\del X$.
This is a little awkward to draw
(see \figref{f:solid-handlebody-skein-basis})
but the point is that $\Psi$ naturally extends to $X_g$.

Indeed, applying $\Psi$ to the $i$-th $X$
is compatible with applying $\ov{\Psi}$ to the $(i+1)$-st $X$.
We thus define $\Psi_g$ to be the cornered cobordism
that results from alternately applying $\Psi$
and $\ov{\Psi}$ to the copies of $X$ in $X_g$
(starting with $\Psi$ on the first copy);
we define $\ov{\Psi}_g$ to be the same except
it starts with $\ov{\Psi}$ on the first copy of $X$.

For application in \xmpref{x:Psi-S-matrix-handlebody},
we also consider slight variants
$\Psi_g', \ov{\Psi}_g'$ of $\Psi_g, \ov{\Psi}_g$.
The only alteration made is near the $(0,0)$ point
of the boundary of the first solid torus,
where the identifications $\Phi, \ov{\Phi}: X' \simeq X$
are twisted so that they also fix a neighborhood of $(0,0)$.
(In the diagram below,
the segments labeled $\vec{m},\vec{l}$ in the right diagram
are the images of $\vec{m},\vec{l}$ in the left diagram.)

\begin{equation}
\Phi' = \Phi
\textrm{ except in a neighborhood of }(0,0):
\begin{tikzpicture}
\draw[midarrow={0.8}] (0,-1) -- (0,1);
\draw[midarrow={0.8}] (-1,0) -- (1,0);
\node at (-0.2,0.6) {\tiny $\vec{m}$};
\node at (0.6,0.2) {\tiny $\vec{l}$};
\node[dotnode] at (0,0) {};
\draw (0,0) circle (1cm);
\end{tikzpicture}
\mapsto
\begin{tikzpicture}
\foreach \x in {0,270}
\draw[rotate=\x, midarrow={0.6}] (0,0) --
  (0,0.05) .. controls +(up:0.5cm) and +(180:0.4cm) .. (1,0);
\foreach \x in {90,180}
\draw[rotate=\x] (0,0) --
  (0,0.05) .. controls +(up:0.5cm) and +(180:0.4cm) .. (1,0);
\draw (0,0) circle (1cm);
\node[dotnode] at (0,0) {};
\node at (0.4,0.35) {\tiny $\vec{m}$};
\node at (0.35,-0.4) {\tiny $\vec{l}$};
\end{tikzpicture}
\end{equation}
\begin{equation}
\ov{\Phi}' = \ov{\Phi}
\textrm{ except in a neighborhood of }(0,0):
\begin{tikzpicture}
\draw[midarrow={0.8}] (0,-1) -- (0,1);
\draw[midarrow={0.8}] (-1,0) -- (1,0);
\node at (-0.2,0.6) {\tiny $\vec{m}$};
\node at (0.6,0.2) {\tiny $\vec{l}$};
\node[dotnode] at (0,0) {};
\draw (0,0) circle (1cm);
\end{tikzpicture}
\mapsto
\begin{tikzpicture}
\foreach \x in {0,270}
\draw[rotate=\x, midarrow={0.6}] (0,0) --
  (0,0.05) .. controls +(up:0.5cm) and +(0:0.4cm) .. (-1,0);
\foreach \x in {90,180}
\draw[rotate=\x] (0,0) --
  (0,0.05) .. controls +(up:0.5cm) and +(0:0.4cm) .. (-1,0);
\draw (0,0) circle (1cm);
\node[dotnode] at (0,0) {};
\node at (-0.4,0.35) {\tiny $\vec{m}$};
\node at (0.35,0.4) {\tiny $\vec{l}$};
\end{tikzpicture}
\end{equation}




We describe the analogs of the two Y-products
from the previous section.
We first need good half-handle decompositions for $X_g$.
In our definition of $X_g$ above,
it suggests that we build $g$ copies of $X$'s first,
and then add a 1-handle between each pair of adjacent $X$'s.
This ruins the symmetry and is not suitable for
constructing Y-products,
so we take a different approach.

Let us build $g$ copies of half of $X$
in the manner that would yield $\YY_X^{(1)}$
(as in the left half of \figref{f:solid-torus-half-1}).
Now add a $1^-$-half-handle wherever the 1-handles
between $X$'s should be added.
Observe that upon adding the dual $1^+$-half-handles,
these make up those 1-handles.
From here, it is easy to see that by continuing with
the dual handles on each $X$
(as in the right half of \figref{f:solid-torus-half-1}),
we arrive at $X_g$.

We denote the Y-product that results from the above
half-handle decomposition of $X_g$
by $\YY_{X_g}^{(1)}$,
since it is closely related to $\YY_X^{(1)}$.
Indeed, it is not hard to see that $\YY_{X_g}^{(1)}$
can also be described as follows:
starting with $X_g \sqcup X_g$,
``apply'' $\YY_X^{(1)}$ at every position,
now we have $g$ copies of $X$,
but between adjacent $X$'s, we have not one but two 1-handles;
apply a $2^-$-half-handle to eliminate one of them.

We define $\YY_{X_g}^{(2)}$ the same way,
except using the half-handle decomposition of $X$
that would yield $\YY_X^{(2)}$ instead of $\YY_X^{(1)}$.

A half-handle decomposition for the
counit for the corresponding Y-coproduct
follows directly from the solid torus case;
apply $2^+$-half-handles that cut the connection
between solid tori,
and then apply the counit corresponding to $\YY_X^{(1)}$
or $\YY_X^{(2)}$ to each solid torus.

\begin{proposition}
\label{p:Psi-Yprod-equiv-handlebody}
As relative cobordisms $X_g \sqcup X_g \to X_g$,
\begin{align*}
\YY_{X_g}^{(1)} \circ (\ov{\Psi}_g \sqcup \ov{\Psi}_g)
&=
\ov{\Psi}_g \circ \YY_{X_g}^{(2)}
\\
\YY_{X_g}^{(1)} \circ (\Psi_g \sqcup \Psi_g)
&=
\Psi_g \circ \YY_{X_g}^{(2)} \circ P
\end{align*}
where $P$ swaps the two copies of $X_g$.
\end{proposition}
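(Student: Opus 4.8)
The plan is to reduce the genus-$g$ identities to the solid-torus identities of \prpref{p:Psi-Yprod-equiv} by exploiting the local, ``position-by-position'' nature of both the Y-products and the twisting maps. Recall from the construction that, starting from $X_g \sqcup X_g$, one builds $\YY_{X_g}^{(1)}$ by applying $\YY_X^{(1)}$ at each of the $g$ solid-torus positions and then attaching a $2^-$-half-handle between each adjacent pair to merge the two resulting $1$-handles; $\YY_{X_g}^{(2)}$ is built identically with $\YY_X^{(2)}$ in place of $\YY_X^{(1)}$. Likewise $\ov{\Psi}_g$ (resp. $\Psi_g$) acts on each copy of $X_g$ by alternately applying $\ov{\Psi}$ and $\Psi$ (resp. $\Psi$ and $\ov{\Psi}$). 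First I would insert \prpref{p:Psi-Yprod-equiv} at each position: at a position where both incoming tori carry $\ov{\Psi}$ one uses the first identity $\YY_X^{(1)} \circ (\ov{\Psi} \sqcup \ov{\Psi}) = \ov{\Psi} \circ \YY_X^{(2)}$, and at a position carrying $\Psi$ one uses the second identity $\YY_X^{(1)} \circ (\Psi \sqcup \Psi) = \Psi \circ \YY_X^{(2)} \circ P$. The alternating pattern built into $\ov{\Psi}_g$ and $\Psi_g$ guarantees that both incoming tori at a given position carry the same twist, so exactly one of the two forms applies.

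Collecting the outgoing twists produced by these local applications reassembles precisely $\ov{\Psi}_g$ (resp. $\Psi_g$) on the outgoing $X_g$, while the local $\YY_X^{(2)}$'s together with the merging $2^-$-half-handles reassemble $\YY_{X_g}^{(2)}$. For this reassembly to be legitimate I would verify that the twists do not disturb the $1$-handles being merged, which is where the choice of connection points pays off: the $1$-handle joining the $(1/2,1/2)$ point of one $\del X$ to the $(0,0)$ point of the next is attached at the two fixed points of the $\pi/2$-rotations $\Psi|_{\del X}$ and $\ov{\Psi}|_{\del X}$ identified in \xmpref{x:solid-torus-S-matrix-topology}. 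Hence both $\Psi$ and $\ov{\Psi}$ carry each connecting $1$-handle to itself, the merging half-handles commute past the twists, and the alternation $\Psi/\ov{\Psi}$ matches the clockwise/anticlockwise rotations across the handle so that the map genuinely extends over $X_g$. Each individual position is then handled by a $2$--$3$ handle cancellation exactly as in \figref{f:Psi-Yprod-equiv}.

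The hard part will be the bookkeeping of the swap $P$. Every use of the second form of \prpref{p:Psi-Yprod-equiv} contributes a local swap of the two solid tori at that position, and I must show these assemble into exactly the stated global statement: no $P$ for the $\ov{\Psi}_g$ equation, and a single global $P$ for the $\Psi_g$ equation. The cleanest way to organize this is induction on $g$: form $X_g$ from $X_{g-1}$ by attaching one more solid torus along a $1$-handle, extend the Y-products and twists over the new piece, apply the solid-torus identity to the new copy, and invoke the inductive hypothesis on $X_{g-1}$. The inductive step must track how a local swap at the new position interacts with the merging $2^-$-half-handle connecting it to $X_{g-1}$; because that half-handle identifies the two $1$-handles coming from the two input copies, a swap confined to the merged region is absorbed, whereas the orientation reversal of $\del X$ under $\Psi$ versus $\ov{\Psi}$ recorded in \xmpref{x:solid-torus-S-matrix-topology} is what survives to give the single global $P$.

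Finally, the second identity should follow from the first by the same argument with the two forms of \prpref{p:Psi-Yprod-equiv} interchanged, mirroring how the two solid-torus identities are related: passing from $\ov{\Psi}_g$ to $\Psi_g$ flips the parity of which positions invoke the swap-carrying form, and the net effect is the outgoing twist $\Psi_g$ together with the single global $P$ swapping the two copies of $X_g$. I expect the topological content to be entirely captured by the solid-torus case and the fixed-point property of the connection points, with the only genuinely new labor being the careful propagation of $P$ through the inductive assembly.
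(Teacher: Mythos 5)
Your overall route is the one the paper intends: reduce to \prpref{p:Psi-Yprod-equiv} position by position, using that the connecting $1$-handles attach at the common fixed points $(0,0)$ and $(1/2,1/2)$ of $\Psi|_{\del X}$ and $\ov{\Psi}|_{\del X}$, and that the alternation of twists is exactly what lets $\Psi_g$ glue across those handles. The gap is in the step you yourself flag as the hard part, the bookkeeping of $P$. You assign to each position one of the two solid-torus identities according to which of $\Psi,\ov{\Psi}$ acts there \emph{in that torus's own standard coordinates}; for the $\ov{\Psi}_g$ equation this gives the swap-free identity at odd positions and the swap-carrying identity at even positions (and the reverse for $\Psi_g$), and you then need the even-position local swaps to be ``absorbed'' while the odd-position ones ``assemble into a single global $P$.'' Neither mechanism exists. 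A global $P$ exchanges the inputs at \emph{all} $g$ positions, so ``swaps at the odd positions only'' can equal $\YY_{X_g}^{(2)}\circ P$ only if swapping the two inputs at the even positions alone is a trivial modification of $\YY_{X_g}^{(2)}$ --- which is precisely your absorption claim, so both of your requirements reduce to the same assertion. That assertion is false for $g\geq 2$: a local swap exchanges two entire solid tori between the two input copies (it is not ``confined to the merged region,'' and the merging $2^-$-half-handle does not see it), and under Crane--Yetter it flips the stacking order at that position, producing mixed over/under crossings instead of the coherent stacking in the generalized fusion product of \defref{d:generalized-fusion-product}; since the genus-$g$ algebra is noncommutative and the braiding nontrivial, this genuinely changes the operation, so no diffeomorphism of relative cobordisms can absorb it.

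The missing idea is that the two solid-torus identities should not be matched to positions via the standard coordinates of each torus. The identification between neighborhoods of $(1/2,1/2)$ on the $i$-th boundary torus and $(0,0)$ on the $(i+1)$-st, through the connecting $1$-handle, is orientation-reversing --- the very fact that forces $\Psi_g$ to alternate $\Psi$ and $\ov{\Psi}$ --- and under this reversal the two identities of \prpref{p:Psi-Yprod-equiv} are exchanged (clockwise becomes anticlockwise, and ``first input on top'' becomes ``second input on top''). Hence the alternation of the twists cancels against the alternation of local chirality: read in one consistent global convention, \emph{every} position of the $\ov{\Psi}_g$ equation instantiates the swap-free identity, and \emph{every} position of the $\Psi_g$ equation instantiates the swap-carrying one, whose $g$ simultaneous local swaps are exactly the single global $P$. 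With that correction your first two paragraphs (fixed-point compatibility plus the $2$--$3$ handle cancellation of \figref{f:Psi-Yprod-equiv} at each position) already complete the proof, and the induction on $g$, together with its absorption step, becomes unnecessary.
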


\begin{proof}
Follows easily from the solid torus case
(\prpref{p:Psi-Yprod-equiv}).
\end{proof}

\section{Verlinde Formula}

The main application we have for this construction
is a topological explanation,
via Crane-Yetter theory,
for the Verlinde formula \cite{VERLINDE1988360},
as stated in \prpref{p:verlinde-formula}
in the context of premodular categories.

As is well known, the skein module of a solid torus
is the Verlinde algebra.
We prove that the Y-products
considered in \secref{s:topology-examples-torus}
give rise to the fusion and convolution products
on the Verlinde algebra,
and the cornered cobordism $\Psi$
gives rise to the $S$-matrix.
Thus, \prpref{p:Psi-Yprod-equiv} gives a topological
proof of the Verlinde formula.

We discuss implications of applying the same techniques
to the higher genus solid handlebodies
(\secref{s:topology-examples-handlebody}),
in particular we derive a generalized $S$-matrix.

See the discussion in \cite{mathoverflowVerlinde}
on the Verlinde formula.

\subsection{Skeins, Witten-Reshetikhin-Turaev}

Skeins, or skein relations,
have been around, at least implicitly,
since Alexander introduced his
polynomial invariant of knots and links
\cite{alexander1928topological},
and brought to greater attention by Conway
\cite{conway1970enumeration}.
After Jones's discovery of his polynomial invariant
\cite{jones1985polynomial},
there was an explosion of activity on skeins,
in particular,
Kauffman found a skein formulation \cite{kauffman1987state},
and a common generalization, the HOMFLY-PT polynomial
was constructed
\cite{freyd1985new},\cite{przytycki1988invariants}.

Skeins are, in a sense, 1-chains with non-abelian coefficients.
A skein in a 3-manifold $M$
is an embedded graph with extra data,
known as a ``coloring'',
attached to its edges and vertices.
More generally,
a skein is a linear combination of such graphs.
There should be a well-defined notion of the
evaluation of a skein in a ball;
two skeins are considered equivalent if they
agree outside a ball and have the same evaluation
in the ball.
In general, the graphs are allowed to intersect the boundary
of $M$ transversely at a set of points $\VV$,
known as a ``boundary value'',
which inherit the coloring from the graph.
The skein module of $M$ with boundary value $\VV$,
denoted $\Skein(M; \VV)$,
is the space of skeins with a fixed boundary value $\VV$.

From here on,
we will assume that the reader is familiar
with the basics of skein theory.
We give a quick summary of the
notations and conventions about premodular categories,
as presented in \cite{tham-reduced}
that are relevant for our applications in this paper;
see also \cite{kirillov-stringnet}.

We fix a semisimple premodular
(i.e. ribbon fusion) category $\cA$
over an algebraically closed field $\kk$.
Here are some notation:
\begin{itemize}
\item $\one$: unit simple object;

\item $X^*$: the (left) dual of $X$;
	we suppress associativity, unit, and pivotal morphisms
	$X \simeq X^{**}$;

\item $X_i$ for $i \in \Irr(\cA)$:
	representative for each isomorphism class of simple object,
	such that $X_{i^*} = X_i^*$;

\item $d_i$: categorical dimension of $X_i$;

\item $\sqrt{d_i}$: choice of square root such that
	$\sqrt{d_\one} = 1$ and $\sqrt{d_i} = \sqrt{d_{i^*}}$;

\item $\cD = \sum_{i \in \Irr(\cA)} d_i^2$:
	the global dimension;
	$\cD\neq 0$ by \cite{ENO2005-fusion};

\item $c_{X,Y}$: braiding of $X$ strand over $Y$ strand:
\begin{equation}
c_{X,Y} = 
\begin{tikzpicture}
\begin{scope}[scale={0.6}]
\draw[->] (1,0.5) to[out=-90,in=90] (0,-0.5);
\draw[->, overline] (0,0.5) to[out=-90,in=90] (1,-0.5);
\node at (0,0.7) {\tiny $X$};
\node at (1,0.7) {\tiny $Y$};
\end{scope}
\end{tikzpicture}
\label{e:braiding-convention}
\end{equation}

\item $\theta_X$: the twist operator,
	given by a left-hand twist:
\begin{equation}
\theta_X =
\begin{tikzpicture}
 \draw[->]
  (0.2,0) .. controls +(up:0.1cm) and +(up:0.4cm) ..
  (0,-0.3);
\draw[thin_overline={1.5}]
  (0,0.3) .. controls +(down:0.4cm) and +(down:0.1cm) ..
  (0.2,0);
\node at (0,0.4) {\tiny $X$};
\end{tikzpicture}
\end{equation}

\item $s_{ij} = \trace c_{j,i^*} \circ c_{i^*,j}$:
	the $S$-matrix; we also define
$\ov{s}_{ij} = \trace c_{i^*,j}^\inv \circ c_{j,i^*}^\inv
= s_{i^* j}$,
which we refer to as the $\ov{S}$-matrix; equivalently,
\begin{equation} \label{e:S-matrix}
\begin{tikzpicture}
\draw (0,0) -- (0,0.6);
\node at (0.15,-0.4) {\tiny $j$};
\draw[overline={1.5}] (0,0.1) circle (0.25cm);
\draw[midarrow] (-0.25,0.05) -- (-0.25,0.04);
\node at (-0.5,0.1) {\tiny $i$};
\draw[overline={1.5},midarrow={0.9}] (0,0) -- (0,-0.5);
\end{tikzpicture}
=
\frac{s_{ij}}{d_j} \cdot \id_{X_j}
\;\; ; \;\;
\begin{tikzpicture}
\draw[midarrow={0.9}] (0,0) -- (0,-0.5);
\node at (0.15,-0.4) {\tiny $j$};
\draw[overline={1.5}] (0,0.1) circle (0.25cm);
\draw[midarrow] (-0.25,0.05) -- (-0.25,0.04);
\node at (-0.5,0.1) {\tiny $i$};
\draw[overline={1.5}] (0,0) -- (0,0.6);
\end{tikzpicture}
=
\frac{\ov{s}_{ij}}{d_j} \cdot \id_{X_j}
\end{equation}
(Note $s_{ij}$ as defined here is denoted $\tilde{s}_{ij}$
in \cite{BakK}).
\end{itemize}

Concatenation of objects will mean tensor product;
this will always be the ``standard'' one,
e.g. the tensor product in $\cA$
or tensor product of vector spaces.
Most sums over simple objects, typically indexed
with a lower case Latin alphabet (usually $i,j,k$)
will implicitly be over $\Irr(\cA)$.

We define the functor
$\eval{} : \cA^{\boxtimes n} \to \textrm{Vec}$
by
\begin{equation} \label{e:vev}
\eval{V_1,\ldots,V_n} = \Hom_\cA(\one,V_1\cdots V_n)
\;\;.
\end{equation}
The pivotal structure gives functorial isomorphisms
\begin{equation} \label{e:cyclic}
z:\eval{V_1,\ldots,V_n} \simeq
  \eval{V_n,V_1,\ldots,V_{n-1}}
\end{equation}
such that $z^n = \id$,
so up to canonical isomorphism,
$\eval{V_1,\ldots,V_n}$ only depends on the cyclic
order of $V_1,\ldots,V_n$.

There is a non-degenerate pairing
$\mathbf{\text{ev}}: \eval{V_1,\ldots,V_n} \tnsr
\eval{V_n^*,\ldots,V_1^*} \to \kk$
obtained by post-composing with evaluation maps.
When two nodes in a graph are labeled by
the same Greek letter, say $\alpha$,
it stands for a summation over
a pair of dual bases:
\begin{equation}\label{e:summation_convention}
\begin{tikzpicture}
\begin{scope}[scale={0.7}]
\node[small_morphism] (ph) at (0,0) {$\alpha$};
\draw[->] (ph)-- +(240:1cm) node[pos=0.7, left] {\tiny $V^{*}_1$} ;
\draw[->] (ph)-- +(180:1cm);
\draw[->] (ph)-- +(120:1cm);
\draw[->] (ph)-- +(60:1cm);
\draw[->] (ph)-- +(0:1cm);
\draw[->] (ph)-- +(-60:1cm) node[pos=0.7, right] {\tiny $V^{*}_n$};
\node[small_morphism] (ph') at (3,0) {$\alpha$};
\draw[->] (ph')-- +(240:1cm) node[pos=0.7, left] {\tiny $V_n$} ;
\draw[->] (ph')-- +(180:1cm);
\draw[->] (ph')-- +(120:1cm);
\draw[->] (ph')-- +(60:1cm);
\draw[->] (ph')-- +(0:1cm);
\draw[->] (ph')-- +(-60:1cm) node[pos=0.7, right] {\tiny $V_1$};
\end{scope}
\end{tikzpicture}
\quad = \sum_\alpha\quad
\begin{tikzpicture}
\begin{scope}[scale={0.7}]
\node[small_morphism] (ph) at (0,0) {$\vphi_\alpha$};
\draw[->] (ph)-- +(240:1cm) node[pos=0.7, left] {\tiny $V^{*}_1$} ;
\draw[->] (ph)-- +(180:1cm);
\draw[->] (ph)-- +(120:1cm);
\draw[->] (ph)-- +(60:1cm);
\draw[->] (ph)-- +(0:1cm);
\draw[->] (ph)-- +(-60:1cm) node[pos=0.7, right] {\tiny $V^{*}_n$};
\node[small_morphism] (ph') at (3,0) {$\vphi^\alpha$};
\draw[->] (ph')-- +(240:1cm) node[pos=0.7, left] {\tiny $V_n$} ;
\draw[->] (ph')-- +(180:1cm);
\draw[->] (ph')-- +(120:1cm);
\draw[->] (ph')-- +(60:1cm);
\draw[->] (ph')-- +(0:1cm);
\draw[->] (ph')-- +(-60:1cm) node[pos=0.7, right] {\tiny $V_1$};
\end{scope}
\end{tikzpicture}
\end{equation}
where $\vphi_\alpha\in \eval{V_1,\dots, V_n}$,
$\vphi^\alpha\in \eval{V_n^*,\dots, V_1^*}$
are dual bases with respect to the pairing 
$\mathbf{\text{ev}}$.

A dashed line stands for the regular coloring,
i.e. the sum of all colorings
by simple objects $i$, each taken with coefficient $d_i$:
\begin{equation} \label{e:regular_color}
\begin{tikzpicture}
\draw[regular] (0,0.3) -- (0,-0.3);
\end{tikzpicture}
=\sum_{i\in \Irr(\cA)} d_i \quad 
\begin{tikzpicture}
\draw[midarrow={0.55}] (0,0.3)--(0,-0.3)
  node[pos=0.5, right] {$i$};
\end{tikzpicture}
\;\;.
\end{equation}
An oriented edge labeled $X$ is the same as
the oppositely oriented edge labeled $X^*$.

Note that although we will be discussing 
various pivotal multifusion categories,
the morphisms are described in terms of morphisms
in $\cA$, in particular
all morphisms depicted graphically are
of morphisms in $\cA$,
unless specified otherwise.

Finally, we record some facts and lemmas that are useful
for computations.
We refer readers to \cite{kirillov-stringnet} for proofs.

\begin{equation} \label{e:combine}
\begin{tikzpicture}
\node[small_morphism] (T) at (0,-0.4) {$\alpha$};
\node[small_morphism] (B) at (0,0.4) {$\alpha$};
\node at (0,1) {$\dots$};
\node at (0,-1) {$\dots$};
\draw[midarrow] (T)-- +(-120:0.8cm)
  node[pos=0.5,left] {\small $V_1$};
\draw[midarrow] (T)-- +(-60:0.8cm)
  node[pos=0.5,right] {\small $V_n$};
\draw[midarrow_rev] (B)-- +(120:0.8cm)
  node[pos=0.5,left] {\small $V_1$};
\draw[midarrow_rev] (B)-- +(60:0.8cm)
  node[pos=0.5,right] {\small $V_n$};
\draw[regular] (T) -- (B); 
\end{tikzpicture}
=
\begin{tikzpicture}
\node at (0,0) {$\dots$};
\draw[midarrow] (-0.3,1)-- (-0.3,-1)
  node[pos=0.5,left] {\small $V_1$};
\draw[midarrow] (0.3,1)-- (0.3,-1)
  node[pos=0.5,right] {\small $V_n$};
\end{tikzpicture}
\end{equation}
\begin{equation}
\label{e:combine-alt}
\begin{tikzpicture}
\node[small_morphism] (B) at (0,-0.5) {$\alpha$};
\node[small_morphism] (T) at (0,0.5) {$\alpha$};
\draw[midarrow_rev={0.6}] (T) -- (0,1);
\draw[midarrow={0.6}] (B) -- (0,-1);
\node at (0,0) {$\dots$};
\draw[midarrow] (T) .. controls +(-150:0.5cm) and +(150:0.5cm) .. (B);
\draw[midarrow] (T) .. controls +(-30:0.5cm) and +(30:0.5cm) .. (B);
\node at (-0.7,0) {\small $V_1$};
\node at (0.7,0) {\small $V_n$};
\node at (0.3,0.8) {\small $X_k$};
\node at (0.3,-0.8) {\small $X_k$};
\end{tikzpicture}
=
\frac{\dim (\Hom(X_k, V_1 \tnsr \cdots \tnsr V_n))}{d_k}
\;
\cdot
\;
\begin{tikzpicture}
\draw[midarrow] (0,1)-- (0,-1)
  node[pos=0.5,right] {\tiny $k$};
\end{tikzpicture}
\end{equation}
\begin{equation} \label{e:sliding}
\textrm{Sliding lemma:}\;\;\;\;\;\;
\begin{tikzpicture}
\draw[regular] (0,0) circle(0.4cm);
\path[subgraph] (0,0) circle(0.3cm); 
\draw (0,1.2)..controls +(-90:0.8cm) and +(90:0.5cm) ..
  (-0.55, 0) ..controls +(-90:0.5cm) and +(90:0.8cm) ..
  (0,-1.2);
 \end{tikzpicture}
\quad=\quad
\begin{tikzpicture}
\draw[regular] (0,0) circle(0.4cm);
\node[small_morphism] (top) at (0,0.45) {\tiny $\alpha$};
\node[small_morphism] (bot) at (0,-0.45) {\tiny $\alpha$};
\path[subgraph] circle(0.3cm); 
\draw (0,1.2)--(top) (bot)--(0,-1.2);    
\end{tikzpicture}
\quad=\quad 
 \begin{tikzpicture}
 \draw[regular] (0,0) circle(0.4cm);
 \path[subgraph] (0,0) circle(0.3cm); 
 \draw (0,1.2)..controls +(-90:0.8cm) and +(90:0.5cm) ..
   (0.55, 0) ..controls +(-90:0.5cm) and +(90:0.8cm) ..
   (0,-1.2);
 \end{tikzpicture}
\end{equation}
where the shaded region can contain anything.

\begin{equation}
\label{e:charge-conservation}
\frac{1}{\cD}
\begin{tikzpicture}
\draw[midarrow={0.9}] (0,0) -- (0,-0.5);
\node at (0.15,-0.4) {\tiny $i$};
\draw[overline={1.5},regular] (0,0.1) circle (0.25cm);
\draw[overline={1.5}] (0,0) -- (0,0.5);
\end{tikzpicture}
=
\delta_{i,1} \id_{X_i}
\;\;
\text{(when $\cA$ is modular)}
\end{equation}

More generally,
from \cite{muger2003structure},
when $\cA$ is just premodular,
if $J \subset \Irr(\cA)$ is the subset of
transparent simple objects,
then
\begin{equation}
\label{e:charge-conservation-premodular}
\frac{1}{\cD}
\begin{tikzpicture}
\draw[midarrow={0.9}] (0,0) -- (0,-0.5);
\node at (0.15,-0.4) {\tiny $i$};
\draw[overline={1.5},regular] (0,0.1) circle (0.25cm);
\draw[overline={1.5}] (0,0) -- (0,0.5);
\end{tikzpicture}
=
\begin{cases}
	\id_{X_i} & \textrm{ if } i \in J \\
	0 & \textrm{ else}
\end{cases}
\end{equation}

The Verlinde algebra is the Grothendieck ring
$V = K(\cA) \tnsr_{\mathbb{Z}} \kk$.
It has a natural basis $x_i := [X_i]$.
The multiplication is given by the fusion rules:
\begin{equation}
x_i x_j = \sum_{k \in \Irr(\cA)} N_{ij}^k x_k
\end{equation}
where $N_{ij}^k$ is the multiplicity of $X_k$
in the direct sum decomposition of $X_i \tnsr X_j$.
We refer to this as the \emph{fusion product}.

We can define another algebra structure $*$ on $V$,
which we refer to as the \emph{convolution product},
given by:
\begin{equation}
x_i * x_j = (\delta_{ij} / d_i) \cdot x_i
\end{equation}

The $S$-matrix diagonalizes the fusion rules
(see \cite{muger2003structure}*{Lemma 2.4 iii},
\cite{BakK}):

\begin{proposition}[Verlinde Formula]
\label{p:verlinde-formula}
Let $s : V \simeq V$ be the linear operator that,
in the basis $\{x_i\}$,
is given by the $S$-matrix; that is,
\[
s(x_i) = \sum_j s_{ij} x_j
\]
Then
\[
s(xy) = s(x) * s(y)
\]
\end{proposition}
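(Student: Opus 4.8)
The plan is to reduce the operator identity to a scalar identity in the entries $s_{ij}$ and the fusion coefficients $N_{ij}^k$, and then to establish that scalar identity by evaluating a single skein in two ways via the encircling relation \eqnref{e:S-matrix}. By linearity of $s$ and bilinearity of both the fusion and convolution products, it suffices to check the equation on basis elements $x = x_i$, $y = x_j$.

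First I would expand both sides in the basis $\{x_l\}$. The left-hand side is
\[
s(x_i x_j) = s\Bigl(\sum_k N_{ij}^k x_k\Bigr) = \sum_l \Bigl(\sum_k N_{ij}^k s_{kl}\Bigr) x_l,
\]
while the right-hand side, using $x_a * x_b = (\delta_{ab}/d_a)\, x_a$, is
\[
s(x_i) * s(x_j) = \Bigl(\sum_a s_{ia} x_a\Bigr) * \Bigl(\sum_b s_{jb} x_b\Bigr) = \sum_l \frac{s_{il}\, s_{jl}}{d_l}\, x_l .
\]
Comparing coefficients of $x_l$, the proposition becomes equivalent to the scalar identities
\[
\sum_k N_{ij}^k s_{kl} = \frac{s_{il}\, s_{jl}}{d_l}
\qquad (i,j,l \in \Irr(\cA)).
\]

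Next I would prove these identities topologically. Fix a simple strand colored $X_l$ and encircle it with two concentric, mutually unlinked loops, one colored $i$ and one colored $j$, each linking the $X_l$-strand once. Evaluating the inner loop by \eqnref{e:S-matrix} replaces it with the scalar $s_{jl}/d_l$, after which the outer loop contributes $s_{il}/d_l$, so the skein equals $(s_{il} s_{jl}/d_l^2)\,\id_{X_l}$. Alternatively, since the two loops are parallel copies, I can fuse them first: by semisimplicity $X_i \tnsr X_j \cong \bigoplus_k N_{ij}^k X_k$, so the pair of loops is rewritten as $\sum_k N_{ij}^k$ loops colored $k$ (each still linking $X_l$ once), and applying \eqnref{e:S-matrix} to each gives $\sum_k N_{ij}^k (s_{kl}/d_l)\,\id_{X_l}$. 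Equating the two scalars and multiplying by $d_l$ yields the claim; as a sanity check, $s_{1l} = d_l$ recovers the expected unit behavior.

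The bookkeeping above is routine; the one point that needs genuine care is the fusion step. I must verify that the loops colored $i$ and $j$ are unlinked from each other — concentric circles in a plane transverse to the strand — so that fusing them introduces no braiding or twist factor and produces precisely $N_{ij}^k = \dim \Hom(X_k, X_i \tnsr X_j)$ rather than some braided variant of it. I would also emphasize that modularity is never invoked: the encircling relation \eqnref{e:S-matrix} holds in any premodular category because $X_l$ is simple, so the encircling endomorphism is forced to be a scalar, determined by its trace (the Hopf-link value $s_{il}$) divided by $d_l$. Finally I would keep all orientations consistent with the convention of \eqnref{e:S-matrix}, ensuring the $s$-matrix rather than the $\ov{s}$-matrix appears throughout.
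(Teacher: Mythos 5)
Your proof is correct, but it takes a genuinely different route from the paper. What you have written is the classical diagrammatic argument: reduce to the scalar identity $\sum_k N_{ij}^k s_{kl} = s_{il}s_{jl}/d_l$ and prove it by encircling a simple strand $X_l$ with two parallel loops, evaluated once loop-by-loop via \eqnref{e:S-matrix} and once after fusing the loops into $\sum_k N_{ij}^k$ loops colored $k$. This is precisely the proof in the references the paper cites for this statement (\cite{muger2003structure}*{Lemma 2.4 iii}, \cite{BakK}); the paper itself states \prpref{p:verlinde-formula} as a known fact and never reproves it algebraically. The paper's own proof is the topological one assembled in \thmref{t:main}: identify $V$ with the Crane-Yetter skein module $\ZCY(X)$ of the solid torus, show that the fusion product, convolution product, and $S$-matrix arise as $\ZCY$ applied to the cobordisms $\YY_X^{(2)}$, $\YY_X^{(1)}$, and $\Psi$ (Examples \ref{x:cy-Yprod-2}, \ref{x:cy-Yprod-1}, \ref{x:Psi-S-matrix}), and then deduce $s(xy)=s(x)*s(y)$ by applying the functor $\ZCY$ to the purely topological identity $\YY_X^{(1)} \circ (\ov{\Psi} \sqcup \ov{\Psi}) = \ov{\Psi} \circ \YY_X^{(2)}$ of \prpref{p:Psi-Yprod-equiv}, which is proved by a 2--3 handle cancellation. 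Your argument is shorter, self-contained, and makes transparent that only premodularity is used (invertibility of $s$ is never needed), with the one genuine care point correctly flagged: the two loops must cobound an untwisted annulus disjoint from the strand so that fusing them yields $N_{ij}^k$ with no braiding or twist corrections. The paper's argument buys what your computation does not: a topological explanation of where the formula comes from, and a generalization, since the same cobordism identity at higher genus produces the generalized $S$-matrix and \thmref{t:main-handlebody}, to which the loop-fusion computation does not obviously extend.
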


When $s$ is invertible, setting $x = x_i, y = x_j$,
we get the more familiar form
\[
N_{ij}^k = \sum_l \frac{ s_{jl} s_{il} (s^\inv)_{lk} }{s_{0l}}
\]

\subsection{Crane-Yetter}
In the second author's PhD thesis
\cite{tham-thesis},
we developed an extended Crane-Yetter theory
in terms of skeins.


\begin{definition}
The state spaces associated to a 3-manifold with boundary
$M$, depending on choice of boundary value $\VV$ on $\del M$,
is defined to be the skein module $\ZCY(M;\VV) = \Skein(M;\VV)$.
\hfill $\triangle$
\end{definition}

Before we give the value of Crane-Yetter on
a cornered cobordism, let us briefly discuss
the relation to WRT.
For this discussion $\cA$ is modular.
We showed in \cite{tham-thesis} that,
as widely expected (see e.g. \cite{barrett-etal-2007}),
WRT theory is a boundary theory of the Crane-Yetter theory,
which we formulate as follows.
Given a cobordism between closed 2-manifolds
$M : N_- \to N_+$,
and a skein $\vphi$ in $M$ with boundary values
$\VV_-,\VV_+$ at $N_-,N_+$ respectively,
WRT assigns vector spaces $\ZRT(N_-;\VV_-), \ZRT(N_+;\VV_+)$
and a linear map between them
\begin{equation}
\label{e:zrt}
\ZRT(M,\vphi) : \ZRT(N_-;\VV_-) \to \ZRT(N_+;\VV_+)
\end{equation}
Now given a relative cobordism between 3-manifolds with
boundary which restricts to $M: N_- \to N_+$
on the vertical boundary, say
$(W, M) : (M_-, N_-) \to (M_+, N_+)$,
Crane-Yetter theory gives us a map
\begin{align}
\label{e:zrt-zcy}
\begin{split}
\ZCY(W,M,\vphi) : \ZCY(M_-;\VV_-) &\to \ZCY(M_+;\VV_+)
\\
\vphi_- &\mapsto \ZCY(W;N_+)(\vphi_- \cup \vphi)
\end{split}
\end{align}
that is, we glue a skein $\vphi_-$ in $M_-$ to $\vphi$
along the boundary value $\VV_-$,
treat it as a skein in $M_- \cup_{N_-} M$,
and apply the Crane-Yetter map associated to the
4-manifold $W$ with corner $N_+$
(see \figref{f:CY-RT} below).

\begin{figure}[h] 
\centering
\input{diagram-CY-RT.tikz}
\caption{$\ZRT$ in terms of $\ZCY$}
\label{f:CY-RT}
\end{figure}

We showed in \cite{tham-thesis} that
Equations \ref{e:zrt} and \ref{e:zrt-zcy}
agree up to a scalar depending only on
$\cA$ and the signature $\sigma$
and Euler characteristic $\chi$ of $W$:
$\ZCY(W) = \kappa^{\sigma(W)} \cD^{\chi(W)/2} \ZRT(\del_v W)$.
\footnote{
Note here $\ZCY$ is $Z^{\text{sk}}_{CY}$ in \cite{tham-thesis}.
}

We define the value of $\ZCY$ on a cornered cobordism $(W;N)$
by first defining it on handles,
or more precisely elementary (cornered) cobordisms,
and then, by decomposing $(W;N)$ into elementary ones,
we define $\ZCY(W;N)$ to be the composition;
we then show that the result is independent
of the choice of decomposition.
We will only need to compute cobordisms with
handles of index $2$ and $3$,
but we the value of $\ZCY$ on handles of all indices
for completeness:

\begin{definition}
\label{d:zcy-elementary}
Let $W : M \to_N M'$
be an elementary cornered cobordism of index $k$,
that is,
an identity cornered cobordism on $M$
composed with a $k$-handle attachment $\cH_k$.
For a boundary value $\VV \in \ZCY(N)$,
we define
\[
\ZCY(W;N) : \ZCY(M;\VV) \to \ZCY(M';\VV)
\]
case-by-case: given a colored ribbon graph
$\Gamma \in \ZCY(M;\VV)$,
$\ZCY(W;N)(\Gamma)$ is constructed as follows:
\begin{itemize}
\item $k=0$: $\cH_0$ adds a new $S^3$ component to $M$;
	we define
	\[
		\ZCY(W;N)(\Gamma) :=
	\cD \cdot \Gamma \cup \emptyskein{S^3}
	\]
	where $\emptyskein{S^3}$ is the empty skein in $S^3$;
\item $k=4$: $\cH_4$ kills off an $S^3$ component of $M$;
	writing $\Gamma = \Gamma' \sqcup \Gamma''$
	with $\Gamma'$ in the $S^3$ component
	and $\Gamma''$ in the other component of $M$,
	\[
		\ZCY(W;N)(\Gamma) :=
		\ZRT(\Gamma') \cdot \Gamma''
	\]
\item $k=1$: the attaching region of $\cH_1$ is a pair of balls;
	by an isotopy, we may arrange that $\Gamma$ is disjoint
	from the attaching region,
	and regard $\Gamma$ as a graph in $M'$,
	and define
	\[
		\ZCY(W;N)(\Gamma) := \cD^\inv \cdot \Gamma
	\]
\item $k=2$: similar to the $k=1$ case,
	arrange $\Gamma$ to be disjoint from the attaching region.
	The belt region of $\cH_2$ (and its neighborhood)
	defines an embedding of the solid torus
	$\Dk{2} \times \del \Dk{2} \to M'$.
	Let $\gamma = [-\veps,\veps] \times \del \Dk{2}
		\subset \Dk{2} \times \del \Dk{2}$
	be the core of the solid torus, trivially framed,
	and let $\Gamma' = (\gamma, \text{regular})$
	be the colored ribbon graph obtained by applying the regular coloring
	to $\gamma$.
	Then, sending $\Gamma'$ to $M'$ under the embedding,
	\[
		\ZCY(W;N)(\Gamma) = \Gamma \cup \Gamma'
	\]
\item $k=3$: first suppose that $\Gamma$ intersects
	the co-core of $\cH_3$ transversely and in exactly one ribbon,
	and suppose the label of this ribbon is a simple object $i$.
	If $i=\one$, we may ignore this ribbon and isotope
	$\Gamma$ to be disjoint from $\cH_3$;
	then we may define
	\[
		\ZCY(W;N)(\Gamma) = \delta_{i,\one} \cdot \Gamma
	\]
	In general, by isotopy, we may arrange $\Gamma$ to be transverse
	to the co-core of $\cH_3$, and then apply
	\eqnref{e:combine} 
	to get $\Gamma = \sum_j \Gamma_j$,
	where each $\Gamma_j$ satisfies the previous assumptions.
	Then we extend to this case by linearity.
\end{itemize}
\hfill $\triangle$
\end{definition}

\begin{proposition}[\cite{tham-thesis}*{Prop. 5.13}]
\label{p:zcy-indep}
Let $W : M \to_N M'$ be a cornered cobordism,
and let $W = W_l \circ \cdots \circ W_1$
be a decomposition of $W$ into a composition of
elementary cornered cobordisms.
We define
\[
\ZCY(W;N) = \ZCY(W_l;N) \circ \cdots \circ \ZCY(W_1;N)
\]

Then $\ZCY(W;N)$ is independent of handle decomposition of $W$.
\end{proposition}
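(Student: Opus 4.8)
The plan is to invoke the standard fact from cornered/relative Morse theory that any two handle decompositions of a fixed cornered cobordism $W$ are connected by a finite sequence of elementary moves — isotopy of attaching maps, reordering of handles whose attaching and belt regions are disjoint, handle slides, and creation or annihilation of a canceling $(k,k+1)$-handle pair (see \cite{borodzik2016morse}). Since $\ZCY(W;N)$ is by construction the composition of the elementary maps of \defref{d:zcy-elementary}, it suffices to check that this composite is invariant under each move; independence of the decomposition then follows, so that the formula for $\ZCY(W;N)$ is well-defined.

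First I would dispose of the easy moves. Invariance under isotopy of attaching maps is immediate, because $\Skein(M;\VV)$ and every operation in \defref{d:zcy-elementary} depend only on the isotopy class of the relevant attaching and belt data. Reordering of independent handles is a routine verification that the operations commute: when the attaching region of one handle is disjoint from the belt region of the other, the two elementary operations act on disjoint portions of the skein, and the scalar factors ($\cD^{\pm 1}$ and the $\ZRT$-evaluations of index $0$ and $4$) commute with everything. Next are the canceling-pair moves. For a $(0,1)$-pair, creation of an $S^3$ summand (factor $\cD$, empty skein) followed by a $1$-handle joining it to $M$ (factor $\cD^{-1}$) returns the original skein and the scalars cancel; the $(3,4)$-pair is its dual. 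For the $(2,3)$-pair one uses the identities recorded before the proposition: the $2$-handle deposits a regular-colored belt circle, and the canceling $3$-handle — whose attaching sphere meets this belt sphere once transversally — resolves it via \eqnref{e:combine}, after which \eqnref{e:charge-conservation} (or \eqnref{e:charge-conservation-premodular} in the premodular case) shows that only the $i=\one$ term survives with coefficient $d_\one = 1$, so the circle is removed and the composite is the identity on $\ZCY(M;\VV)$.

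The main obstacle will be the handle slides, and specifically those involving index-$2$ handles, since this is where the nontrivial skein theory enters. The key tool is the sliding lemma \eqnref{e:sliding}: sliding one $2$-handle over another sweeps the regular-colored belt circle across a portion of the graph, and \eqnref{e:sliding} guarantees that the resulting skein is unchanged; slides past $3$-handles are controlled similarly using \eqnref{e:combine} and \eqnref{e:combine-alt}. I would treat each pair of handle indices in turn and confirm that in every case the elementary operations of \defref{d:zcy-elementary} intertwine correctly with the slide. The bookkeeping here — tracking framings, the precise position of the regular circle, and the resolution into simple components — is the delicate part, but no single new idea beyond the listed skein identities is required. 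Once all four classes of moves are verified, the proposition follows.
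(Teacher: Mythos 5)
Your proposal is correct and is essentially the paper's own argument: the paper likewise reduces the statement to checking that handle slides and handle-pair creation/annihilation leave $\ZCY(W;N)$ unchanged, verified by exactly the skein identities you cite (\eqnref{e:combine}, \eqnref{e:sliding}, \eqnref{e:charge-conservation}), deferring details to \cite{tham-thesis}. The only small omission is the $(1,2)$-canceling pair, which is handled by the same reasoning as your other cases (it is dual to the $(2,3)$-pair; directly, the $\cD^{-1}$ from the $1$-handle cancels the factor $\cD$ from the now-contractible regular-colored belt circle of the $2$-handle).
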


The proof boils down to checking that
modifying the decomposition (into elementary cobordisms)
by handle slides and handle pair creation/annihilation
does not affect $\ZCY(W;N)$,
and is a simple exercise in skein theory.
Note that in \cite{tham-thesis} we work in PL topology,
but since we work in low dimensions,
it is the same.

\subsection{Solid tori}

Recall that we let $X = \Dk{2} \times S^1$ denote the solid torus,
Let $C$ denote the core circle $\{0\} \times S^1$ in $X$,
framed trivially with respect to $X$ as a product
(i.e. the normal vector is a fixed vector $v$
in every slice $\Dk{2} \times \{\theta\}$),
and fixed the orientation on $C$ agreeing with $\vec{l}$.

The skein module of $X$ with empty boundary
condition has basis $\{\vphi_i\}_{i \in \Irr(\cA)}$,
where $\vphi_i$ is the skein with underlying graph being $C$
and labeled by the simple object $X_i$
(this is stated more generally in \lemref{l:V-ZCY-Xg};
see e.g. \cite{tham-thesis}*{Eqn. 9.15}).
Then $\ZCY(X)$ can be identified with
the Verlinde algebra:
\begin{equation}
\label{e:verlinde-solid-torus}
\ZCY(X) \simeq V \;\;,\;\;
\vphi_i \mapsto x_i
\end{equation}

Then, as mentioned at the beginning of
\secref{s:topology-examples-torus},
the ``flipping a donut'' automorphism translate into
charge conjugation $i \mapsto i^*$
in the Verlinde algebra.

\begin{example}
\label{x:Psi-S-matrix}
The cornered cobordisms $\Psi,\ov{\Psi}$
implement the $S$,$\ov{S}$-matrices on $\ZCY(X)$,
i.e.
\begin{align*}
\ZCY(\Psi)(\vphi_j)
&= s(\vphi_j)
= \sum_i s_{ij} \vphi_i
\\
\ZCY(\ov{\Psi})(\vphi_j)
&= \ov{s}(\vphi_j)
= \sum_i \ov{s}_{ij} \vphi_i
\end{align*}
This follows from a direct computation as in
\figref{f:solid-torus-S-matrix-skein} below.
\hfill $\triangle$
\end{example}

\begin{figure}[h] 
\centering
\begin{tikzpicture}
\node at (-2.1,0) {$\varphi_j =$};
\draw (0,0) ellipse (1.5cm and 1cm);
\draw (-0.2,0.1) to[out=-60,in=-120] (0.2,0.1);
\draw (-0.1,0) to[out=60,in=120] (0.1,0);
\draw[midarrow={0.8}] (0,0) ellipse (0.8cm and 0.5cm);
\node at (0.3,-0.6) {\tiny $j$};
\node at (1.3,-0.9) {$X$};
\draw[|->] (1.8,0) -- (3.2,0);
\node at (2.5,0.3) {\small 2-handle};
\begin{scope}[shift={(5,0)}]
\draw (0,0) ellipse (1.5cm and 1cm);
\draw (-0.2,0.1) to[out=-60,in=-120] (0.2,0.1);
\draw (-0.1,0) to[out=60,in=120] (0.1,0);
\draw (0,0) ellipse (0.6cm and 0.4cm);
\draw[thin_overline={2},dashed] (-0.6,0) circle (0.2cm);
\draw[thin_overline={1}] (-0.6,0) arc (180:360:0.6cm and 0.4cm);
\draw[midarrow={0.8}] (0,0) ellipse (1.2cm and 0.8cm);
\node at (0.1,-0.65) {\tiny $j$};
\node at (1.3,-0.9) {$X'$};
\draw[line width=0.1mm] (-1.9,0.5) -- (-1.3,0.9) -- (-0.2,0.38);
\end{scope}
\draw[|->] (6.8,0) -- (8.2,0);
\node at (7.65,0.7) {$m \mapsto l$};
\node at (7.8,0.3) {$l \mapsto -m$};
\node at (6.8,0.5) {$\Psi:$};
\begin{scope}[shift={(10,0)}]
\draw (0,0) ellipse (1.5cm and 1cm);
\draw (-0.2,0.1) to[out=-60,in=-120] (0.2,0.1);
\draw (-0.1,0) to[out=60,in=120] (0.1,0);
\draw[dashed] (0,0) ellipse (0.8cm and 0.5cm);
\draw[thin_overline={1}] (-0.8,0) circle (0.2cm);
\draw[dashed,thin_overline={2}] (-0.8,0) arc (180:360:0.8cm and 0.5cm);
\node at (-1.2,0) {\tiny $j$};
\draw[->] (-1,0.05) -- (-1,-0.05);
\node at (1.3,-0.9) {$X$};
\end{scope}
\node at (12.5,0) {$= \sum s_{ij} \varphi_i$};
\end{tikzpicture}
\caption{Crane-Yetter applied to $\Psi$
results in the $S$-matrix on $V$;
computation for $\ov{S}$-matrix is similar.
}
\label{f:solid-torus-S-matrix-skein}
\end{figure}
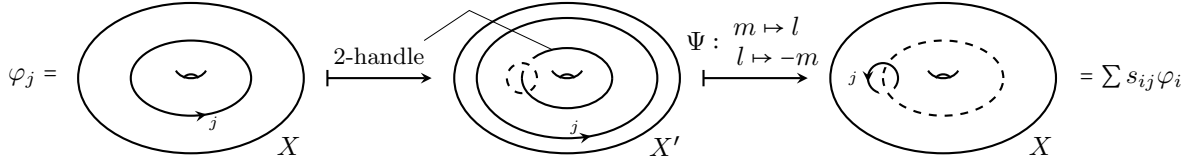

\begin{example}
\label{x:cy-Yprod-2}
Consider the Y-product $\YY_X^{(2)}$ on $X$
as described in \xmpref{x:solid-torus-2},
where we worked out a half-handle decomposition.
This is a relative cobordism
\[
(\YY_X^{(2)}, \YY_{\del X}^{(2)}) :
(X \sqcup X, \del X \sqcup \del X) \to (X, \del X)
\]

In order to apply Crane-Yetter theory,
we interpret this relative cobordism
as a cornered cobordisms
by taking the outgoing corner $\del_+^2 = \del X$
as the corner:
\[
(\YY_X^{(2)}; \del X):
\YY_{\del X}^{(2)} \cup (X \sqcup X) \to X
\]

By \lemref{l:negative-half-handle-identity},
this cornered cobordism is in fact the identity cobordism,
since only half-handles
of the negative type are used to build $\YY_X^{(2)}$.

Let us put the empty skein in the
vertical boundary $\YY_{\del X}^{(2)}$,
$\vphi_i$ in one $X$, and $\vphi_j$ in the other $X$.
It is easy to see that under
$\YY_{\del X}^{(2)} \cup (X \sqcup X) \simeq X$,
the two skeins $\vphi_i, \vphi_j$ simply stack
on top of each other
(see \figref{f:solid-torus-Yprod-2-CY});
as skeins, this is equal to the linear combination
$\sum_k N_{ij}^k \vphi_k$
(use \eqnref{e:combine-alt}).


\begin{figure}[h] 
\centering
\begin{tikzpicture}
\node at (-3.6,0) {$Z_{CY}(\mathbf{Y}_X^{(2)})(\varphi_i \otimes \varphi_j) =$};
\draw (0,0) ellipse (1.5cm and 1cm);
\draw (-0.2,0.1) to[out=-60,in=-120] (0.2,0.1);
\draw (-0.1,0) to[out=60,in=120] (0.1,0);
\draw[midarrow={0.7}] (0,-0.2) ellipse (0.8cm and 0.5cm);
\draw[thin_overline={2},midarrow={0.7}] (0,0.2) ellipse (0.8cm and 0.5cm);
\node at (-0.2,-0.4) {\tiny $i$};
\node at (-0.2,-0.82) {\tiny $j$};
\node at (1.8,0) {$=$};
\begin{scope}[shift={(3.6,0)}]
\draw (0,0) ellipse (1.5cm and 1cm);
\draw (-0.2,0.1) to[out=-60,in=-120] (0.2,0.1);
\draw (-0.1,0) to[out=60,in=120] (0.1,0);
\draw (-1.1,0) arc (180:40:1.1cm and 0.7cm);
\draw[midarrow={0.5}] (-1.1,0) arc (180:320:1.1cm and 0.7cm);
\draw (-0.8,0) arc (180:40:0.8cm and 0.5cm);
\draw[midarrow={0.4}] (-0.8,0) arc (180:320:0.8cm and 0.5cm);
\node at (-0.2,-0.36) {\tiny $j$};
\node at (-0.2,-0.82) {\tiny $i$};
\node[dotnode] (a1) at (0.9,0.2) {};
\node[dotnode] (a2) at (0.9,-0.2) {};
\node at (1.1,0.2) {\tiny $\alpha$};
\node at (1.1,-0.2) {\tiny $\alpha$};
\draw (a1) to[out=90,in=-30] (0.83,0.46);
\draw (a1) to[out=150,in=-30] (0.6,0.33);
\draw (a2) to[out=-90,in=30] (0.83,-0.46);
\draw (a2) to[out=-150,in=30] (0.6,-0.33);
\draw[dotted] (a1) to[out=-60,in=60] (a2);
\end{scope}
\node at (5.4,0) {$=$};
\begin{scope}[shift={(7.2,0)}]
\draw (0,0) ellipse (1.5cm and 1cm);
\draw (-0.2,0.1) to[out=-60,in=-120] (0.2,0.1);
\draw (-0.1,0) to[out=60,in=120] (0.1,0);
\draw[dashed] (0.8,0) arc (0:140:0.8cm and 0.5cm);
\draw[dashed] (0.8,0) arc (0:-140:0.8cm and 0.5cm);
\draw[white, line width=0.8mm] (0.78,0) -- (0.82,0);
\node[dotnode] (a3) at (-0.6,0.33) {};
\node[dotnode] (a4) at (-0.6,-0.33) {};
\node at (-0.74,0.4) {\tiny $\alpha$};
\node at (-0.74,-0.4) {\tiny $\alpha$};
\draw[midarrow] (a3) to[out=-150,in=150] (a4);
\draw[midarrow] (a3) to[out=-70,in=70] (a4);
\node at (-0.9,0) {\tiny $i$};
\node at (-0.4,0) {\tiny $j$};
\end{scope}
\node at (9.7,0) {$= \sum N_{ij}^k \varphi_k$};
\end{tikzpicture}
\caption{}
\label{f:solid-torus-Yprod-2-CY}
\end{figure}

Thus, $\YY_X^{(2)}$ gives the fusion product
on the Verlinde algebra:
\begin{equation}
\ZCY(\YY_X^{(2)})(\vphi_i \tnsr \vphi_j) =
\sum_k N_{ij}^k \vphi_k
\end{equation}
and so \eqnref{e:verlinde-solid-torus}
is an isomorphism of algebras.
\hfill $\triangle$
\end{example}

\begin{example}
\label{x:cy-Yprod-1}
Now consider the Y-product $\YY_X^{(1)}$ from
\xmpref{x:solid-torus-1}.
The (half-)handles of $\YY_X^{(1)}$ are
of index $1^-, 2^-, 3$ in that order.
Since the $3$-handle does not affect the vertical boundary,
we see that $\YY_X^{(1)}$, treated again as a
cornered cobordism with $\del_+^2$ as corner,
is the identity cornered cobordism
(from the $1^-$-,$2^-$-half-handles)
composed with a $3$-handle:
\[
(\YY_X^{(1)}; \del X):
\YY_{\del X}^{(1)} \cup (X \sqcup X) \xrightarrow{3\text{-handle}} X
\]
\begin{figure}[h] 
\centering
\input{diagram-Y1-skein.tikz}
\caption{}
\label{f:solid-torus-Yprod-1-CY}
\end{figure}

From the computation in \figref{f:solid-torus-Yprod-1-CY},
we see that $\YY_X^{(1)}$ gives the convolution product
on the Verlinde algebra
\begin{equation}
\ZCY(\YY_X^{(1)})(\vphi_i \tnsr \vphi_j) =
1/d_i \cdot \delta_{ij} \cdot \vphi_i
\end{equation}
\hfill $\triangle$
\end{example}

Finally, we put everything together
to give an alternative proof of the Verlinde formula
based on Crane-Yetter theory:

\begin{theorem}[Verlinde Formula]
\label{t:main}
Under an identification of the Verlinde algebra $V$
with the skein module $\ZCY(X)$,
the fusion product, convolution product,
and $S$,$\ov{S}$-matrix on $V$
are the results of evaluating the Crane-Yetter functor
on the relative cobordisms
$\YY_X^{(2)},\YY_X^{(1)}: X \sqcup X \to X$,
and $\Psi,\ov{\Psi}: X \to X$, respectively.

Thus, since the Y-products are related by
$\Psi$ (resp. $\ov{\Psi}$),
the $\ov{S}$-matrix (resp. $S$-matrix)
is an algebra morphism from
$V_g$ with the fusion product to
$V_g$ with the convolution product, i.e.

\begin{align}
\label{e:main}
\begin{split}
s(x) * s(y) &= s(xy)
\\
\ov{s}(x) * \ov{s}(y) &= \ov{s}(xy)
\end{split}
\end{align}
for $x,y \in V$.

\end{theorem}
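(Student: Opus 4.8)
The plan is to first record the three computations that identify the algebraic operations on $V$ with Crane--Yetter evaluations, and then to push the topological identities of \prpref{p:Psi-Yprod-equiv} through the functor $\ZCY$.

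The first paragraph of the statement is exactly the content of the preceding examples, so I would simply invoke them. By \xmpref{x:cy-Yprod-2}, $\ZCY(\YY_X^{(2)})(\vphi_i \otimes \vphi_j) = \sum_k N_{ij}^k \vphi_k$, i.e. $\ZCY(\YY_X^{(2)})$ is the fusion product, which I will write as $m$; by \xmpref{x:cy-Yprod-1}, $\ZCY(\YY_X^{(1)})$ is the convolution product, written $m_*$; and by \xmpref{x:Psi-S-matrix}, $\ZCY(\Psi) = s$ and $\ZCY(\ov{\Psi}) = \ov{s}$. In each case the relative cobordism is read as a cornered cobordism by taking the outgoing corner, as in the discussion before \defref{d:zcy-elementary}, with the empty skein placed in the vertical boundary.

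For the two displayed formulas I would apply $\ZCY$ to the identities of \prpref{p:Psi-Yprod-equiv}, using that $\ZCY$ carries composition of cornered cobordisms to composition of linear maps and disjoint union to tensor product. Applying it to $\YY_X^{(1)} \circ (\ov{\Psi} \sqcup \ov{\Psi}) = \ov{\Psi} \circ \YY_X^{(2)}$ gives
\[
m_* \circ (\ov{s} \otimes \ov{s}) = \ov{s} \circ m,
\]
and evaluating on $x \otimes y$ yields $\ov{s}(x) * \ov{s}(y) = \ov{s}(xy)$. Applying it to $\YY_X^{(1)} \circ (\Psi \sqcup \Psi) = \Psi \circ \YY_X^{(2)} \circ P$ gives $m_* \circ (s \otimes s) = s \circ m \circ P_*$, where $P_*$ is the transposition on $V \otimes V$; since the Verlinde algebra is commutative we have $m \circ P_* = m$, and evaluating on $x \otimes y$ yields $s(x) * s(y) = s(xy)$.

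The main obstacle is justifying this use of functoriality, since $\ZCY$ was defined on single cornered cobordisms rather than as a functor \emph{per se}. I would argue that a handle decomposition of a composite can be taken to be the concatenation of handle decompositions of its factors, so that by \prpref{p:zcy-indep} the value of $\ZCY$ on the composite is, by construction, the composite of the elementary maps; the only point to check is that gluing vertical boundaries is compatible with the empty-skein boundary condition, which is immediate since the empty skein restricts to empty skeins. Compatibility of disjoint union with tensor product is checked the same way. The absorption of the swap $P$ is then purely algebraic, via commutativity of the fusion product.
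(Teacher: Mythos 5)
Your proposal is correct and follows essentially the same route as the paper's proof: invoke Examples \ref{x:cy-Yprod-1}, \ref{x:cy-Yprod-2}, and \ref{x:Psi-S-matrix} to identify the two products and the $S$,$\ov{S}$-matrices with Crane--Yetter evaluations, then apply $\ZCY$ to the identities of \prpref{p:Psi-Yprod-equiv} and absorb the swap $P$ by commutativity of the fusion product. Your final paragraph justifying functoriality of $\ZCY$ via concatenation of handle decompositions and \prpref{p:zcy-indep} is a point the paper leaves implicit, and it is a sound way to fill that in.
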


\begin{proof}
Examples \ref{x:cy-Yprod-1}, \ref{x:cy-Yprod-2}
show that under the identification $x_i \mapsto \vphi_i$,
the Y-products $\YY_X^{(2)}, \YY_X^{(1)}$
give the fusion and convolution products, respectively.
\xmpref{x:Psi-S-matrix} shows that $\Psi$
gives the $S$-matrix operator.

\prpref{p:Psi-Yprod-equiv}
shows that the Y-products are related by $\Psi,\ov{\Psi}$;
by evaluating under Crane-Yetter, we get
\begin{align*}
\ZCY(\YY_X^{(1)}) \circ (\ZCY(\ov{\Psi}) \tnsr \ZCY(\ov{\Psi}))
&=
\ZCY(\ov{\Psi}) \circ \ZCY(\YY_X^{(2)})
\\
\ZCY(\YY_X^{(1)}) \circ (\ZCY(\Psi) \tnsr \ZCY(\Psi))
&=
\ZCY(\Psi) \circ \ZCY(\YY_X^{(2)}) \circ P
=
\ZCY(\Psi) \circ \ZCY(\YY_X^{(2)})
\end{align*}
since both products are commutative,
and we have \eqnref{e:main};
here $P$ is the swapping of factors for tensor product
of vector spaces.

\end{proof}

For the other way, that is,
$S,\ov{S}$-matrices as algebra maps
from convolution to fusion, we have:

\begin{theorem}
\label{t:verlinde-formula-reverse}
Let $J \subset \Irr(\cA)$ be the subset of
transparent simple objects,
and let $\vphi_J = \sum_{i \in J} d_i \vphi_i$.
Then 
\begin{align}
s((\cD \vphi_J \cdot x) * y)
= s(x * (\cD \vphi_J \cdot y))
= s(x) \cdot s(y)
\\
\ov{s}((\cD \vphi_J \cdot x) * y)
= \ov{s}(x * (\cD \vphi_J \cdot y))
= \ov{s}(x) \cdot \ov{s}(y)
.
\end{align}
\end{theorem}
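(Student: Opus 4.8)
The plan is to run the argument of Theorem~\ref{t:main} in reverse, replacing the clean relation \prpref{p:Psi-Yprod-equiv} between the two Y-products by \prpref{p:Psi-Yprod-equiv-rev}, which differs only by the insertion of the cornered cobordism $K$. Concretely, I would apply $\ZCY$ to the first line of \prpref{p:Psi-Yprod-equiv-rev},
\[
\YY_X^{(2)} \circ (\Psi \sqcup \Psi)
= \Psi \circ \YY_X^{(1)} \circ (K \sqcup \id_X)
= \Psi \circ \YY_X^{(1)} \circ (\id_X \sqcup K),
\]
and evaluate each factor using the identifications already in place: $\ZCY(\YY_X^{(2)})$ is the fusion product and $\ZCY(\YY_X^{(1)})$ the convolution product (\xmpref{x:cy-Yprod-2}, \xmpref{x:cy-Yprod-1}), while $\ZCY(\Psi)=s$ and $\ZCY(\ov\Psi)=\ov s$ (\xmpref{x:Psi-S-matrix}). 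By the functoriality of $\ZCY$ used already in Theorem~\ref{t:main}, the topological identity becomes
\[
s(x)\cdot s(y) = s\bigl(\ZCY(K)(x) * y\bigr) = s\bigl(x * \ZCY(K)(y)\bigr),
\]
where $\cdot$ denotes the fusion product. The whole statement thus reduces to identifying the operator $\ZCY(K)$ on $V\simeq\ZCY(X)$.

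The heart of the proof is to show that $\ZCY(K)$ is fusion multiplication by $\cD\vphi_J$. I would compute $\ZCY(K)$ directly from the handle presentation in \figref{f:solid-torus-K} rather than from the factorization $K=\Psi\circ\ov\Psi$, since the direct computation manifestly produces the transparent element in the form required and remains valid in the genuinely premodular setting, where $S$ is degenerate. By \defref{d:zcy-elementary}, each of the two $2$-handles of $K$ contributes a regular-colored belt loop to the core skein $\vphi_j$. Evaluating these loops against the core is a charge-conservation computation: the encircling identity \eqnref{e:charge-conservation-premodular} projects the core color onto the transparent subcategory $J\subset\Irr(\cA)$ and supplies the factor $\cD$, while the remaining regular loop, slid parallel to the core, becomes fusion with $\vphi_J=\sum_{i\in J}d_i\vphi_i$. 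The net effect is $\ZCY(K)(x)=\cD\vphi_J\cdot x$.

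Substituting this into the displayed identity gives the first line of the theorem, the two equalities arising from the $(K\sqcup\id_X)$ and $(\id_X\sqcup K)$ forms in \prpref{p:Psi-Yprod-equiv-rev}. For the $\ov s$ statements I would apply $\ZCY$ to the second line of \prpref{p:Psi-Yprod-equiv-rev} in the same way; the swap $P$ disappears upon evaluation because both the fusion and convolution products are commutative, so the extra permutation acts trivially on $V\tnsr V$. This yields $\ov s(x)\cdot\ov s(y)=\ov s((\cD\vphi_J\cdot x)*y)=\ov s(x*(\cD\vphi_J\cdot y))$, which is the second line.

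The main obstacle is the direct evaluation of $\ZCY(K)$, and in particular pinning down both the scalar $\cD$ and the restriction to the transparent objects $J$. One must check that the two belt loops are read off in the correct framing and linking pattern relative to the core, so that one loop supplies the encircling (hence the projection onto $J$ together with the factor $\cD$) while the other reduces to a parallel copy fusing into $\vphi_J$. Getting this bookkeeping right in the non-modular case, where the subset $J$ is genuinely larger than $\{\one\}$ and $\vphi_J$ is the object that the transparent part of the regular coloring picks out, is the delicate point; everything else is the formal transport of \prpref{p:Psi-Yprod-equiv-rev} through $\ZCY$, exactly as in Theorem~\ref{t:main}.
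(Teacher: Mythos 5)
Your proposal is correct and follows the paper's own proof essentially step for step: both reduce the theorem to \prpref{p:Psi-Yprod-equiv-rev} via functoriality of $\ZCY$ exactly as in \thmref{t:main}, then compute $\ZCY(K)$ directly from \defref{d:zcy-elementary} as the insertion of the two $\Omega$-labeled surgery circles of \figref{f:solid-torus-K}, identified with fusion multiplication by $\cD\vphi_J$ through the premodular charge-conservation identity \eqnref{e:charge-conservation-premodular}. The only cosmetic difference is that the paper records the intermediate identity $\Xi = s(\sum_i d_i \vphi_i)$ for this two-circle skein, whereas you phrase the same evaluation as an encircling-loop/parallel-loop computation; both yield $\ZCY(K)(x) = \cD\vphi_J \cdot x$ and hence the stated formulas.
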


\begin{proof}
Similar to \thmref{t:main},
this follows from \prpref{p:Psi-Yprod-equiv-rev}
and computing $\ZCY(K)$.

It follows from \defref{d:zcy-elementary} that,
given $\vphi_i$, its image under $\ZCY(K)$,
is obtained by adding $\Xi$ to $\vphi_i$,
where $\Xi$ is the skein consisting
of the two 2-handle attaching circles that define $K$
(in \figref{f:solid-torus-K}),
both labeled with $\Omega$.
In other words,
$\Xi = s(\sum_{i \in \Irr(\cA)} d_i \vphi_i)$,
and $\ZCY(K)(\vphi_i) = \vphi_i \cdot \Xi$.

By \eqnref{e:charge-conservation},
when $\cA$ is modular, we have
$\Xi = \cD \cdot \vphi_{\one} = \cD \cdot \emptyskein{X}$;
more generally, by \eqnref{e:charge-conservation-premodular},
for $\cA$ premodular, we have
$\Xi = \cD \cdot (\sum_{i \in J} d_i \vphi_i) = \cD \vphi_J$.
\end{proof}

\begin{remark}
We may also prove \thmref{t:verlinde-formula-reverse}
for modular $\cA$
by appealing to the more general result that,
as mentioned at the beginning of this section
(in the discussion relating Crane-Yetter to Reshetikhin-Turaev),
$\ZRT$ and $\ZCY$ should only differ by a factor
$\kappa^{\sigma} \cD^{\chi / 2}$
which only depends on $\cA$
and on the signature $\sigma$ and Euler characteristic $\chi$
of the 4-manifold in question.
Here, as a 4-manifold, $K$ is $S^2 \times \Dk{2}$,
which has $\sigma = 0, \chi = 2$,
while the identity cobordism on $X$, as a 4-manifold,
is $S^1 \times \Dk{2} \times \Dk{2}$
and has $\sigma = 0, \chi = 0$.
This contributes a factor of $\cD^{\chi / 2} = \cD$
to $\ZCY(K)$ as compared to $\ZCY(\id_X)$,
which agrees with out computation above.
\end{remark}

\subsection{Y-coproducts, units, counits}

\begin{example}
\label{x:torus-frobenius-1}
[Y-coproduct etc for $\YY_X^{(1)}$,
continuation of \xmpref{x:cy-Yprod-1}]
To compute $\coYY_X^{(1)}$,
we use the half-handle decomposition
dual to \figref{f:solid-torus-Yprod-1}.

We start with $\vphi_i$.
The dual to the 3-handle is a 1-handle,
which essentially does not affect $\vphi_i$
except give a factor of $1/\cD$.
Next, the dual to the $2^-$-half-handle
is a $1^+$-half-handle,
which we present as a $1^-$-half-handle
followed by a $2$-handle.
The $1^-$-half-handle only changes the boundary,
and the $2$-handle's belt circle is an unknot
that is contractible, so it gives a factor of $\cD$.
Finally, the dual to the $1^-$-half-handle
is a $2^+$-half-handle,
which we present as a $2^-$-half-handle
followed by a $3$-handle.
Again, the $2^-$-half-handle only changes the boundary,
and the $3$-handle cuts $\vphi_i$ twice.
The overall composition gives
\[
\ZCY(\coYY_X^{(1)})(\vphi_i) =
\frac{1}{d_i} \cdot \vphi_i \tnsr \vphi_i
\]

The counit, as given in \figref{f:solid-torus-counit-1},
when evaluated under $\ZCY$,
simply amounts to embedding $X$ in $S^3$ in a standard way
($\vec{l}$ must give 0-framing),
and evaluating the skein under $\ZRT$.
In particular, $\vphi_i \mapsto d_i$.
It is clear that this is a counit for $\ZCY(\coYY_X^{(1)})$,
as expected.

By performing a similar analysis to the
dual of the counit,
we see that the main interesting step for the unit
is the attachment of a $2$-handle
(part of the $1^+$-half-handle that
``punches out the donut hole''),
which gives us the skein consisting of
the core circle labeled by $\Omega$,
i.e. $\sum d_i \vphi_i$,
which is clearly the unit for the convolution product.

To summarize, we have the following structures,
which form a Frobenius algebra
(see discussion around \eqnref{e:frobenius-topology},
or check by direction computation):
\begin{align}
\label{e:frob-1}
\begin{split}
\vphi_i \tnsr \vphi_j &\mapsto
\frac{\delta_{ij}}{d_i} \vphi_i
\\
1 &\mapsto \sum d_i \vphi_i
\\
\vphi_i &\mapsto \frac{1}{d_i} \vphi_i \tnsr \vphi_i
\\
\vphi_i &\mapsto d_i
\end{split}
\end{align}
\hfill $\triangle$
\end{example}

\begin{example}
\label{x:torus-frobenius-2}
Similar to \xmpref{x:torus-frobenius-1},
we use the half-handle decomposition dual to
\figref{f:solid-torus-Yprod-2}.

We start with $\vphi_i$.
The dual to the $2^-$-half-handle is a $1^+$-half-handle,
which effectively punches out a hole in the side of the donut,
while adding a $\Omega$-labeled circle around the hole.
Enlarge the hole so that our 3-manifold resembles
the second diagram in \figref{f:solid-torus-Yprod-2}.
The dual to the $1^-$-half-handle is a $2^+$-half-handle,
which effectively cuts of the $1^-$-half-handle that
connects the two solid tori.
The result is $\sum_k \vphi_i \vphi_k \tnsr \vphi_{k^*}$,
or if we chose to push $\vphi_i$ to the bottom solid torus,
$\sum_k \vphi_k \tnsr \vphi_{k^*} \vphi_i$,
(the $d_k$ factor from $\Omega$
is canceled by the $\alpha$ that arises from the
$2^+$-half-handle, as we saw in \xmpref{x:cy-Yprod-1}).

The counit, as given in \figref{f:solid-torus-counit-2},
when evaluated under $\ZCY$,
is given by $\vphi_i \mapsto \delta_{i0}$:
the $2^+$-half-handle denies any $\vphi_i$
except for $i=\one$.
It is clear from the dual to this half-handle decomposition
that the unit is given by $\vphi_0$.

To summarize, we have the following structures,
which form a Frobenius algebra
(see discussion around \eqnref{e:frobenius-topology},
or check by direction computation):
\begin{align}
\label{e:frob-2}
\begin{split}
\vphi_i \tnsr \vphi_j &\mapsto
\sum_k N_{ij}^k \vphi_k
\\
1 &\mapsto \vphi_0
\\
\vphi_i &\mapsto \sum_k \vphi_i \vphi_k \tnsr \vphi_{k^*}
= \sum_k \vphi_k \tnsr \vphi_{k^*} \vphi_i
\\
\vphi_i &\mapsto \delta_{i0}
\end{split}
\end{align}
\hfill $\triangle$
\end{example}

In \cite{Moore1989}, they give a generalization
of the Verlinde formula to the
``$n$-point function characters at genus $g$''
(here $S_{ij} = s_{ij}/\cD^{1/2}$):
\begin{equation}
\label{e:verlinde-genus-g}
\dim V(g,i_1,\ldots,i_n) =
\sum_p \frac{S_{i_1 p} \cdots S_{i_n p}}{S_{0p} \cdots S_{0p}}
(\frac{1}{S_{0p}})^{2g - 2}
\end{equation}

We may arrive at \eqnref{e:verlinde-genus-g} as follows.
Consider the following computation,
each diagram representing a cobordism
$X^{\sqcup n} \to \emptyset$
(see \eqnref{e:K-Psi-Y1-Y2}),
here shown for $n=3$ inputs and $g=1$ loop
(the unlabeled dots mean $\Psi$,
and the numbered dots refer to the Y-(co)product or counit):
\begin{equation}
\label{e:verlinde-higher-genus}
\begin{tikzpicture}
\begin{scope}[shift={(0,0)}]
\node[dotnode] (a1) at (0,0.5) {};
\node[dotnode] (a2) at (0,0) {};
\node[dotnode] (a3) at (0,-0.5) {};
\node[dotnode] (a4) at (0,-1) {};
\draw (a1) -- (a2);
\draw (a2) .. controls +(-150:0.3cm) and +(150:0.3cm) .. (a3);
\draw (a2) .. controls +(-30:0.3cm) and +(30:0.3cm) .. (a3);
\draw (a3) -- (a4);
\draw (a1) -- (-0.5,1);
\draw (a1) -- (0,1);
\draw (a1) -- (0.5,1);
\node[dotnode] at (-0.21,-0.25) {}; 
\node at (0.15,0.45) {\tiny 1};
\node at (0.15,0.05) {\tiny 1};
\node at (0.15,-0.55) {\tiny 1};
\node at (0.15,-1) {\tiny 1};
\node at (-0.35,-0.25) {\tiny $K$};
\node[dotnode] at (-0.25,0.75) {};
\node[dotnode] at (0,0.75) {};
\node[dotnode] at (0.25,0.75) {};
\end{scope}
\node at (1,0) {$=$};
\begin{scope}[shift={(2,0)}]
\node[dotnode] (a1) at (0,0.5) {};
\node[dotnode] (a2) at (0,0) {};
\node[dotnode] (a3) at (0,-0.5) {};
\node[dotnode] (a4) at (0,-1) {};
\draw (a1) -- (a2);
\draw (a2) .. controls +(-150:0.3cm) and +(150:0.3cm) .. (a3);
\draw (a2) .. controls +(-30:0.3cm) and +(30:0.3cm) .. (a3);
\draw (a3) -- (a4);
\draw (a1) -- (-0.5,1);
\draw (a1) -- (0,1);
\draw (a1) -- (0.5,1);
\node[dotnode] at (-0.21,-0.25) {}; 
\node at (0.15,0.45) {\tiny 2};
\node at (0.15,0.05) {\tiny 1};
\node at (0.15,-0.55) {\tiny 1};
\node at (0.15,-1) {\tiny 1};
\node at (-0.35,-0.25) {\tiny $K$};
\node[dotnode] at (0,0.25) {};
\end{scope}
\node at (3,0) {$=$};
\begin{scope}[shift={(4,0)}]
\node[dotnode] (a1) at (0,0.5) {};
\node[dotnode] (a2) at (0,0) {};
\node[dotnode] (a3) at (0,-0.5) {};
\node[dotnode] (a4) at (0,-1) {};
\draw (a1) -- (a2);
\draw (a2) .. controls +(-150:0.3cm) and +(150:0.3cm) .. (a3);
\draw (a2) .. controls +(-30:0.3cm) and +(30:0.3cm) .. (a3);
\draw (a3) -- (a4);
\draw (a1) -- (-0.5,1);
\draw (a1) -- (0,1);
\draw (a1) -- (0.5,1);
\node[dotnode] at (-0.21,-0.25) {}; 
\node at (0.15,0.45) {\tiny 2};
\node at (0.15,0.05) {\tiny 2};
\node at (0.15,-0.55) {\tiny 1};
\node at (0.15,-1) {\tiny 1};
\node[dotnode] at (0.21,-0.25) {};
\end{scope}
\node at (5,0) {$=$};
\begin{scope}[shift={(6,0)}]
\node[dotnode] (a1) at (0,0.5) {};
\node[dotnode] (a2) at (0,0) {};
\node[dotnode] (a3) at (0,-0.5) {};
\node[dotnode] (a4) at (0,-1) {};
\draw (a1) -- (a2);
\draw (a2) .. controls +(-150:0.3cm) and +(150:0.3cm) .. (a3);
\draw (a2) .. controls +(-30:0.3cm) and +(30:0.3cm) .. (a3);
\draw (a3) -- (a4);
\draw (a1) -- (-0.5,1);
\draw (a1) -- (0,1);
\draw (a1) -- (0.5,1);
\node at (0.15,0.45) {\tiny 2};
\node at (0.15,0.05) {\tiny 2};
\node at (0.15,-0.55) {\tiny 2};
\node at (0.15,-1) {\tiny 1};
\node[dotnode] at (0,-0.75) {};
\end{scope}
\node at (7,0) {$\approx$};
\begin{scope}[shift={(8,0)}]
\node[dotnode] (a1) at (0,0.5) {};
\node[dotnode] (a2) at (0,0) {};
\node[dotnode] (a3) at (0,-0.5) {};
\node[dotnode] (a4) at (0,-1) {};
\draw (a1) -- (a2);
\draw (a2) .. controls +(-150:0.3cm) and +(150:0.3cm) .. (a3);
\draw (a2) .. controls +(-30:0.3cm) and +(30:0.3cm) .. (a3);
\draw (a3) -- (a4);
\draw (a1) -- (-0.5,1);
\draw (a1) -- (0,1);
\draw (a1) -- (0.5,1);
\node at (0.15,0.45) {\tiny 2};
\node at (0.15,0.05) {\tiny 2};
\node at (0.15,-0.55) {\tiny 2};
\node at (0.15,-1) {\tiny 2};
\end{scope}
\end{tikzpicture}
\end{equation}

The last `$\approx$' is not an equality because
$\coii_1 \circ \Psi \neq \coii_2$,
they differ by an additional 2-handle.
Under Crane-Yetter, the difference
is only a factor of $\cD$:
the $\Omega$ loop around the attaching circle of $\Psi$
is contractible because of $\coii_1$,
thus contributes a factor of $\cD$.

Putting $\vphi_{i_1},\ldots,\vphi_{i_n}$ as input,
the left most diagram gives
\begin{align}
\begin{split}
\vphi_{i_1} \tnsr \cdots \tnsr \vphi_{i_n}
\mapsto
\sum_p \frac{s_{i_1 p} \cdots s_{i_n p}}{d_p^{n-1}}
	\cdot \vphi_p
\mapsto
\sum_p \frac{s_{i_1 p} \cdots s_{i_n p}}{d_p^{n}}
	\cdot \vphi_p \tnsr \vphi_p
\mapsto
\sum_p \cD \cdot \frac{s_{i_1 p} \cdots s_{i_n p}}{d_p^{n+1}}
	\cdot \vphi_p
\mapsto
\cdots
\\
\cdots
\mapsto
\sum_p \cD^g \cdot
	\frac{s_{i_1 p} \cdots s_{i_n p}}{d_p^{n-1+2g}}
	\cdot \vphi_p
\mapsto
\sum_p \cD^g \cdot
	\frac{s_{i_1 p} \cdots s_{i_n p}}{d_p^{n+2g-2}}
\end{split}
\end{align}
which is equal to the right hand side of
\eqnref{e:verlinde-genus-g}.
The right most diagram gives
\begin{align}
\begin{split}
\vphi_{i_1} \tnsr \cdots \tnsr \vphi_{i_n}
\mapsto
\vphi_{i_1} \cdots \vphi_{i_n}
\mapsto
\sum_{k_1} \vphi_{k_1} \tnsr
	\vphi_{k_1^*} \vphi_{i_1} \cdots \vphi_{i_n}
\mapsto
\sum_{k_1} \vphi_{k_1}
	\vphi_{k_1^*} \vphi_{i_1} \cdots \vphi_{i_n}
\mapsto
\cdots
\\
\cdots
\mapsto
\sum_{k_1,\ldots,k_g} \vphi_{k_1} \vphi_{k_1^*}
	\cdots
	\vphi_{k_g} \vphi_{k_g^*}
	\vphi_{i_1} \cdots \vphi_{i_n}
\mapsto
\dim V(g,i_1,\ldots,i_n)
\end{split}
\end{align}

Note that the genus $g$ here has nothing to do with
the genus in $X_g$ in the following section,
and \eqnref{e:verlinde-genus-g} is not the generalization
of the Verlinde formula that we promised in the abstract.

\subsection{Higher genus handlebodies}

It is easy to see that skeins of the form shown
in \figref{f:solid-handlebody-skein-basis}
span $\ZCY(X_g;A)$
(use \eqnref{e:combine} and isotope to
look like that, then use skein relations in a ball
around $\vphi$).

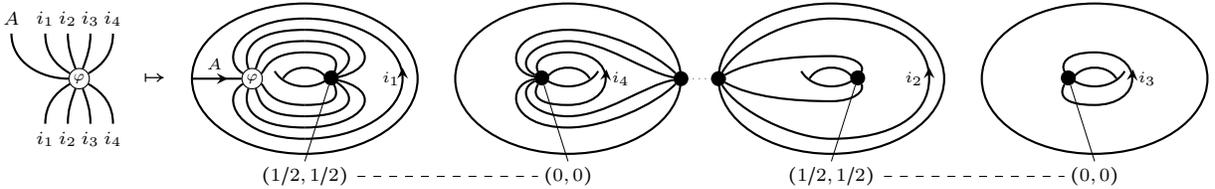
\begin{figure}[h] 
\centering
\begin{tikzpicture}
\begin{scope}[shift={(-3,0)}]
\node[small_morphism,minimum size=5pt] (ph) at (0,0) {\scriptsize $\varphi$};
\draw (ph) to[out=150,in=-90] (-0.45,0.6);
\draw (ph) to[out=-150,in=90] (-0.45,-0.6);
\draw (ph) to[out=110,in=-90] (-0.15,0.6);
\draw (ph) to[out=-110,in=90] (-0.15,-0.6);
\draw (ph) to[out=70,in=-90] (0.15,0.6);
\draw (ph) to[out=-70,in=90] (0.15,-0.6);
\draw (ph) to[out=30,in=-90] (0.45,0.6);
\draw (ph) to[out=-30,in=90] (0.45,-0.6);
\draw (ph) to[out=180,in=-90] (-0.9,0.6);
\node at (-0.9,0.8) {\scriptsize $A$};
\node at (-0.45,0.8) {\scriptsize $i_1$};
\node at (-0.45,-0.8) {\scriptsize $i_1$};
\node at (-0.15,0.8) {\scriptsize $i_2$};
\node at (-0.15,-0.8) {\scriptsize $i_2$};
\node at (0.15,0.8) {\scriptsize $i_3$};
\node at (0.15,-0.8) {\scriptsize $i_3$};
\node at (0.45,0.8) {\scriptsize $i_4$};
\node at (0.45,-0.8) {\scriptsize $i_4$};
\end{scope}
\node at (-2,0) {$\mapsto$};
\draw (0,0) ellipse (1.5cm and 1cm);
\draw (-0.4,0.1) to[out=-60,in=-120] (0.4,0.1);
\draw (-0.29,0) to[out=60,in=120] (0.29,0);
\node[fill=black,circle,inner sep=2pt,outer sep=0pt] (d1) at (0.35,0.01) {};
\node[small_morphism,minimum size=5pt] (p1) at (-0.7,0) {\scriptsize $\varphi$};
\draw[midarrow_rev] (p1) -- (-1.5,0);
\node at (-1.2,0.2) {\tiny $A$};
\node at (0,-1.3) {\scriptsize $(1/2,1/2)$};
\draw[line width=0.1mm] (0,-1.1) -- (d1);
\draw[line width=0.1mm,dashed] (0.7,-1.3) -- (3.1,-1.3);
\draw (p1) .. controls +(150:0.5cm) and +(180:0.9cm) ..
	(0,0.8);
\draw (p1) .. controls +(-150:0.5cm) and +(180:0.9cm) ..
	(0,-0.8);
\draw[midarrow_rev] (0,0.8) arc (90:-90:1.3cm and 0.8cm);
\node at (1.15,0) {\tiny $i_1$};
\draw (p1) .. controls +(110:0.4cm) and +(180:0.7cm) ..
	(0,0.65);
\draw (p1) .. controls +(-110:0.4cm) and +(180:0.7cm) ..
	(0,-0.65);
\draw (0.35,0) .. controls +(10:0.8cm) and +(0:0.8cm) ..
	(0,0.65);
\draw (0.35,0) .. controls +(-10:0.8cm) and +(0:0.8cm) ..
	(0,-0.65);
\draw (p1) .. controls +(70:0.3cm) and +(180:0.6cm) ..
	(0,0.5);
\draw (p1) .. controls +(-70:0.3cm) and +(180:0.6cm) ..
	(0,-0.5);
\draw (0.35,0) .. controls +(30:0.5cm) and +(0:0.6cm) ..
	(0,0.5);
\draw (0.35,0) .. controls +(-30:0.5cm) and +(0:0.6cm) ..
	(0,-0.5);
\draw (p1) .. controls +(30:0.2cm) and +(180:0.5cm) ..
	(0,0.35);
\draw (p1) .. controls +(-30:0.2cm) and +(180:0.5cm) ..
	(0,-0.35);
\draw (0.35,0) .. controls +(60:0.3cm) and +(0:0.4cm) ..
	(0,0.35);
\draw (0.35,0) .. controls +(-60:0.3cm) and +(0:0.4cm) ..
	(0,-0.35);
\begin{scope}[shift={(3.5,0)}]
\draw (0,0) ellipse (1.5cm and 1cm);
\draw (-0.4,0.1) to[out=-60,in=-120] (0.4,0.1);
\draw (-0.29,0) to[out=60,in=120] (0.29,0);
\node[fill=black,circle,inner sep=2pt,outer sep=0pt] (d2) at (-0.35,0.01) {};
\node[fill=black,circle,inner sep=2pt,outer sep=0pt] (d2p) at (1.5,0) {};
\node at (0,-1.3) {\scriptsize $(0,0)$};
\draw[line width=0.1mm] (0,-1.1) -- (d2);
\draw[line width=0.1mm,dotted] (1.5,0) -- (2,0);
\draw[midarrow_rev={0.5}]
	(d2) .. controls +(120:0.2cm) and +(180:0.4cm) ..
	(0,0.35) .. controls +(0:0.3cm) and +(90:0.2cm) ..
	(0.5,0) .. controls +(-90:0.2cm) and +(0:0.3cm) ..
	(0,-0.35) .. controls +(180:0.5cm) and +(-120:0.2cm) ..
	(d2);
\node at (0.7,0) {\tiny $i_4$};
\draw (d2) .. controls +(155:0.4cm) and +(180:0.5cm) ..
	(0,0.5) .. controls +(0:0.5cm) and +(170:0.5cm) ..
	(d2p);
\draw (d2) .. controls +(-155:0.4cm) and +(180:0.5cm) ..
	(0,-0.5) .. controls +(0:0.5cm) and +(-170:0.5cm) ..
	(d2p);
\draw (d2) .. controls +(170:0.5cm) and +(180:0.8cm) ..
	(0,0.65) .. controls +(0:0.5cm) and +(135:0.5cm) ..
	(d2p);
\draw (d2) .. controls +(-170:0.5cm) and +(180:0.8cm) ..
	(0,-0.65) .. controls +(0:0.5cm) and +(-135:0.5cm) ..
	(d2p);
\end{scope}
\begin{scope}[shift={(7,0)}]
\draw (0,0) ellipse (1.5cm and 1cm);
\draw (-0.4,0.1) to[out=-60,in=-120] (0.4,0.1);
\draw (-0.29,0) to[out=60,in=120] (0.29,0);
\node[fill=black,circle,inner sep=2pt,outer sep=0pt] (d3) at (-1.5,0) {};
\node[fill=black,circle,inner sep=2pt,outer sep=0pt] (d3p) at (0.35,0.01) {};
\node at (0,-1.3) {\scriptsize $(1/2,1/2)$};
\draw[line width=0.1mm] (0,-1.1) -- (d3p);
\draw[line width=0.1mm,dashed] (0.7,-1.3) -- (3.1,-1.3);
\draw[midarrow_rev={0.5}]
	(d3) .. controls +(60:0.5cm) and +(180:0.6cm) ..
	(0,0.8) .. controls +(0:0.8cm) and +(90:0.5cm) ..
	(1.3,0) .. controls +(-90:0.5cm) and +(0:0.8cm) ..
	(0,-0.8) .. controls +(180:0.5cm) and +(-60:0.5cm) ..
	(d3);
\node at (1.1,0) {\tiny $i_2$};
\draw (d3) .. controls +(30:0.5cm) and +(180:0.5cm) ..
	(0,0.3) .. controls +(0:0.3cm) and +(60:0.2cm) ..
	(d3p);
\draw (d3) .. controls +(-30:0.5cm) and +(180:0.5cm) ..
	(0,-0.3) .. controls +(0:0.3cm) and +(-60:0.2cm) ..
	(d3p);
\end{scope}
\begin{scope}[shift={(10.5,0)}]
\draw (0,0) ellipse (1.5cm and 1cm);
\draw (-0.4,0.1) to[out=-60,in=-120] (0.4,0.1);
\draw (-0.29,0) to[out=60,in=120] (0.29,0);
\node[fill=black,circle,inner sep=2pt,outer sep=0pt] (d4) at (-0.35,0.01) {};
\node at (0,-1.3) {\scriptsize $(0,0)$};
\draw[line width=0.1mm] (0,-1.1) -- (d4);
\draw[midarrow_rev={0.5}]
	(d4) .. controls +(120:0.2cm) and +(180:0.4cm) ..
	(0,0.35) .. controls +(0:0.3cm) and +(90:0.2cm) ..
	(0.5,0) .. controls +(-90:0.2cm) and +(0:0.3cm) ..
	(0,-0.35) .. controls +(180:0.5cm) and +(-120:0.2cm) ..
	(d4);
\node at (0.7,0) {\tiny $i_3$};
\end{scope}
\end{tikzpicture}
\caption{
We depict for genus 4, but the general case is clear:
At each $X$, going from left to right,
one strand is ``left behind'',
alternately the leftmost and rightmost strand,
so the label on the ``left behind'' strand would be
$i_1, i_g, i_2, i_{g-1}, \ldots, i_{\lfloor g/2 \rfloor + 1}$.
Note the labels $(0,0)$ and $(1/2,1/2)$ indicate
how $X$ is identified with that solid torus.
}
\label{f:solid-handlebody-skein-basis}
\end{figure}


Define the \emph{genus $g$ Verlinde algebra} to be
$V_g = \sum_{i_1,\ldots,i_g} V_{(i_1,\ldots,i_g)}$,
where
$V_{(i_1,\ldots,i_g)} :=
\End_\cA(X_{i_1} \tnsr \cdots \tnsr X_{i_g})$,
and the direct sum is over all $g$-tuples of simple objects.

We define the
\emph{internal genus $g$ Verlinde algebra}
(see \cite{gunningham2019finiteness})
to be the functor
\begin{align}
\begin{split}
V_g^{(-)}: \cA^{op} &\to \textrm{Vec}
\\
A &\mapsto
V_g^{(A)}
:=
\dirsum_{i_1,\ldots,i_g}
V_{(i_1,\ldots,i_g)}^{(A)}
:=
\dirsum_{i_1,\ldots,i_g}
\Hom_{\cA}(A X_{i_1} \cdots X_{i_g},
X_{i_1} \cdots X_{i_g})
\\
f: A \to B &\mapsto
- \circ (f \tnsr \id)
\end{split}
\end{align}

By \figref{f:solid-handlebody-skein-basis},
we have linear maps
$V_{(i_1,\ldots,i_g)}^{(A)} \to \ZCY(X_g;A)$.
Note that the marked point on the boundary,
labeled with object $A\in \cA$,
is placed at a fixed point of $\Psi'$ (and of $\Psi_g'$).

By \lemref{l:V-ZCY-Xg} below,
we may unambiguously denote skeins in $\ZCY(X_g;A)$
by an element in $V_g^{(A)}$.

\begin{lemma}
\label{l:V-ZCY-Xg}
The linear maps
$V_{(i_1,\ldots,i_g)}^{(A)} \to \ZCY(X_g;A)$
are isomorphisms onto their images;
moreover, their images give a direct sum decomposition
of $\ZCY(X_g;A)$.
Thus, we have a natural isomorphism
$V_g^{(-)} \simeq \ZCY(X_g;-): \cA^{op} \to \textrm{Vec}$.
\end{lemma}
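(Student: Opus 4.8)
The plan is to upgrade the spanning statement recalled just before the lemma into a basis statement. Surjectivity of the total map $V_g^{(A)} \to \ZCY(X_g;A)$ is exactly that spanning statement: applying \eqnref{e:combine} merges the strands threading each handle into a single regular-colored strand, and a skein relation in a ball around $\vphi$ reduces any skein to the normal form of \figref{f:solid-handlebody-skein-basis}. Hence the real content is \emph{injectivity}, together with \emph{directness} of the sum over the colorings $(i_1,\ldots,i_g)$, and naturality in $A$. To get at these I would realize $X_g$ through its standard handle decomposition as a single $0$-handle with $g$ one-handles, and let $D_1,\ldots,D_g$ be the cocore (meridian) disks of the one-handles. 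Cutting $X_g$ along $D_1\sqcup\cdots\sqcup D_g$ yields a $3$-ball $B$, on whose boundary each $D_k$ reappears as a pair $D_k^+,D_k^-$, while the marked boundary point colored $A$ survives on $\del B$. This is the geometric shadow of the normal form: each loop emanating from $\vphi$ crosses one $D_k$ exactly twice.

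Next I would decompose $\ZCY(X_g;A)$ by the simple object threading each disk. Given any skein, isotope it to meet each $D_k$ transversally, use \eqnref{e:combine} to replace the crossing strands by a single regular-colored strand, and then expand the regular coloring into simples. Semisimplicity of $\cA$ makes the color $i_k$ on each $D_k$ well defined and exhibits the projections onto a fixed tuple $(i_1,\ldots,i_g)$ as mutually orthogonal idempotent operations; this is precisely what turns spanning into a direct sum. Formally this is the excision/gluing property of skein modules of a semisimple category (see \cite{kirillov-stringnet} and \cite{tham-thesis}): the skein module of a manifold cut along a disk is the direct sum, over simple colors placed on the disk, of the skein modules of the cut-open manifold with those boundary colors. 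Iterating over $D_1,\ldots,D_g$ gives
\[
\ZCY(X_g;A) \;\simeq\; \dirsum_{i_1,\ldots,i_g} \Skein\!\left(B;\, A,\ (X_{i_k},X_{i_k}^*)_{k=1}^{g}\right).
\]

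Then I would identify each summand with $V_{(i_1,\ldots,i_g)}^{(A)}$. Since $B$ is a ball, its skein module with a prescribed boundary value is an evaluation space $\langle\,\cdots\,\rangle=\Hom_\cA(\one,-)$ of the cyclic word of boundary colors (as in \eqnref{e:vev}); by the duality between tensoring and dualizing together with the cyclic invariance \eqnref{e:cyclic}, this space is canonically identified with $V_{(i_1,\ldots,i_g)}^{(A)}=\Hom_\cA(A X_{i_1}\cdots X_{i_g},\,X_{i_1}\cdots X_{i_g})$, and under this identification the inclusion into $\ZCY(X_g;A)$ is exactly the map of \figref{f:solid-handlebody-skein-basis}. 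This yields injectivity of each $V_{(i_1,\ldots,i_g)}^{(A)}\to\ZCY(X_g;A)$ and independence of the images. For naturality, a morphism $f\colon A\to B$ acts on $\ZCY(X_g;-)$ by inserting $f$ on the $A$-colored strand in a collar of the marked boundary point, and on $V_g^{(-)}$ by $-\circ(f\tnsr\id)$; since the $A$-strand runs directly from the marked point into $\vphi$, inserting $f$ there is precisely precomposition of $\vphi$ with $f\tnsr\id$. As the marked point sits at the distinguished fixed point of $\Psi'$ used throughout, this identification is the canonical one, so $V_g^{(-)}\simeq\ZCY(X_g;-)$ as functors $\cA^{op}\to\textrm{Vec}$.

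I expect the main obstacle to be the excision step: justifying that cutting along the meridian disks and summing over simple colorings is an \emph{isomorphism} rather than merely a surjection, i.e. that the cut-open ball summands inject and remain linearly independent inside $\ZCY(X_g;A)$. Concretely this is the non-degeneracy supplied by semisimplicity — one detects the coefficient of a fixed coloring via the orthogonality of the idempotents underlying \eqnref{e:combine} and \eqnref{e:combine-alt} and the bookkeeping of dual bases at the vertex $\vphi$ — and once it is in hand everything else is routine rearrangement of morphisms.
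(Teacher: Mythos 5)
Your proposal is correct and is, in substance, the paper's own approach: the paper's entire proof reads ``Follows from standard skein-theoretic techniques,'' and your argument --- cut $X_g$ along the $g$ meridian disks, use semisimplicity and the combine move to decompose by the simple color carried on each disk, identify the skein module of the resulting ball with an evaluation space, and match it to $V_{(i_1,\ldots,i_g)}^{(A)}$ by duality, with naturality in $A$ checked by inserting coupons at the marked point --- is precisely those standard techniques, spelled out in more detail than the paper gives (including correctly isolating the well-definedness of the excision decomposition as the only real content). One cosmetic slip: in the normal form the loop labeled $i_k$ meets $D_k$ in a single point (which becomes a pair of endpoints on $D_k^{+}$ and $D_k^{-}$ after cutting), not twice, which is exactly what your formula $\Skein\bigl(B;\,A,\ (X_{i_k},X_{i_k}^*)_{k=1}^{g}\bigr)$ already presupposes.
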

\begin{proof}
Follows from standard skein-theoretic techniques.
\end{proof}

We may refine $V_{(i_1,\ldots,i_g)}^{(A)}$
by the simple object that the morphism passes through:
we define $V_{(i_1,\ldots,i_g);k}^{(A)} :=
\mathrm{span} (\Hom(X_k, X_{i_1} \cdots X_{i_g})
\circ \Hom(A X_{i_1} \cdots X_{i_g}, X_k))$,
and we have
$V_{(i_1,\ldots,i_g)}^{(A)} =
\dirsum_k V_{(i_1,\ldots,i_g);k}^{(A)}$

The generalization of the convolution product
for the genus $g$ Verlinde algebra
is given by composition in each component
$V_{(i_1,\ldots,i_g);k}$
(see \defref{d:generalized-convolution-product},
\xmpref{x:cy-handlebody-Yprod-1}),
while the generalization of the fusion product
is not so obvious
(see \defref{d:generalized-fusion-product},
\xmpref{x:cy-handlebody-Yprod-2}).

The operations defined below should be thought of as
natural transformations $V_g^{(-)} \xrightarrow{.} V_g^{(-)}$
or $V_g^{(-)} \tnsr V_g^{(-)}
\xrightarrow{.} V_g^{(- \tnsr^{op} -)}$;
when restricted to $\one$,
we get linear maps $V_g \to V_g$ or
$V_g \tnsr V_g \to V_g$.
Again by \lemref{l:V-ZCY-Xg},
we may regard the operations defined below as
natural transformations
$\ZCY(X_g;-) \xrightarrow{.} \ZCY(X_g;-)$
or
$\ZCY(X_g;-) \tnsr \ZCY(X_g;-)
\xrightarrow{.} \ZCY(X_g;- \tnsr^{op} -)$.

\begin{definition}
\label{d:generalized-S-matrix}
The \emph{generalized $S$-matrix}
and \emph{generalized $\ov{S}$-matrix}
are given by, for
$\vphi \in V_{(i_1,\ldots,i_g)}^{(A)}$,

\begin{equation*}
\label{e:solid-handlebody-S-matrix-skein-result}
s(\vphi) = 
\sum_{k_1,\ldots,k_g}
d_{k_1} \cdots d_{k_g}
\begin{tikzpicture}
\node[dotnode] (ph) at (0,0) {};
\draw (0.5,0) -- (0.5,1);
\draw (1,0) -- (1,1);
\draw[thin_overline={1.5}, midarrow_rev={0.1}] (ph)
	.. controls +(60:0.4cm) and +(90:0.2cm) .. (0.6,0)
	.. controls +(-90:0.2cm) and +(-60:0.4cm) .. (ph);
\draw[thin_overline={1.5}, midarrow_rev={0.1}] (ph)
	.. controls +(120:0.8cm) and +(90:0.8cm) .. (1.1,0)
	.. controls +(-90:0.8cm) and +(-120:0.8cm) .. (ph);
\draw[thin_overline={1.5}, midarrow={0.8}] (0.5,0) -- (0.5,-1);
\draw[thin_overline={1.5}, midarrow={0.8}] (1,0) -- (1,-1);
\draw (ph) .. controls +(180:0.3cm) and +(-90:0.5cm) .. (-0.5,1);
\node at (-0.15,-0.1) {\tiny $\vphi$};
\node at (-0.2,0.4) {\tiny $i_1$};
\node at (0.25,0) {\tiny $i_g$};
\node at (0.3,-0.8) {\tiny $k_1$};
\node at (1.2,-0.8) {\tiny $k_g$};
\node at (0.8,0) {\tiny $\ldots$};
\node at (0.75,-0.8) {\tiny $\ldots$};
\node at (0.75,0.8) {\tiny $\ldots$};
\node at (-0.5,1.1) {\tiny $A$};
\end{tikzpicture}
\in V_{(i_1,\ldots,i_g)}^{(A)}
\;\; ; \;\;
\ov{s}(\vphi) = 
\sum_{k_1,\ldots,k_g}
d_{k_1} \cdots d_{k_g}
\begin{tikzpicture}
\node[dotnode] (ph) at (0,0) {};
\draw[midarrow={0.8}] (0.5,0) -- (0.5,-1);
\draw[midarrow={0.8}] (1,0) -- (1,-1);
\draw[thin_overline={1.5}, midarrow_rev={0.1}] (ph)
	.. controls +(60:0.4cm) and +(90:0.2cm) .. (0.6,0)
	.. controls +(-90:0.2cm) and +(-60:0.4cm) .. (ph);
\draw[thin_overline={1.5}, midarrow_rev={0.1}] (ph)
	.. controls +(120:0.8cm) and +(90:0.8cm) .. (1.1,0)
	.. controls +(-90:0.8cm) and +(-120:0.8cm) .. (ph);
\draw[thin_overline={1.5}] (0.5,0) -- (0.5,1);
\draw[thin_overline={1.5}] (1,0) -- (1,1);
\draw (ph) .. controls +(180:0.3cm) and +(-90:0.5cm) .. (-0.5,1);
\node at (-0.15,-0.1) {\tiny $\vphi$};
\node at (-0.2,0.4) {\tiny $i_1$};
\node at (0.25,0) {\tiny $i_g$};
\node at (0.3,-0.8) {\tiny $k_1$};
\node at (1.2,-0.8) {\tiny $k_g$};
\node at (0.8,0) {\tiny $\ldots$};
\node at (0.75,-0.8) {\tiny $\ldots$};
\node at (0.75,0.8) {\tiny $\ldots$};
\node at (-0.5,1.1) {\tiny $A$};
\end{tikzpicture}
\in V_{(i_1,\ldots,i_g)}^{(A)}
.
\end{equation*}


\hfill $\triangle$
\end{definition}

\begin{definition}
\label{d:generalized-convolution-product}
For $\vphi \in V_{(i_1,\ldots,i_g)}^{(A)},
\vphi' \in V_{(j_1,\ldots,j_g)}^{(B)}$,
we define their
\emph{generalized convolution product}
to be
\begin{equation}
\vphi * \vphi'
= \frac{\delta_{i_1 j_1}}{d_{i_1}} 
\cdot \ldots \cdot
\frac{\delta_{i_g j_g}}{d_{i_g}}
\cdot
\vphi' \circ (\id_B \tnsr \vphi)
\begin{tikzpicture}
\node[dotnode] (ph) at (0,0.3) {};
\node[dotnode] (ph') at (0,-0.3) {};
\draw (-0.3,0.6)
	.. controls +(-90:0.15cm) and +(150:0.15cm) .. (ph)
	.. controls +(-150:0.15cm) and +(90:0.15cm) .. (-0.3,0)
	.. controls +(-90:0.15cm) and +(150:0.15cm) .. (ph')
	.. controls +(-150:0.15cm) and +(90:0.15cm) .. (-0.3,-0.6);
\draw (0.3,0.6)
	.. controls +(-90:0.15cm) and +(30:0.15cm) .. (ph)
	.. controls +(-30:0.15cm) and +(90:0.15cm) .. (0.3,0)
	.. controls +(-90:0.15cm) and +(30:0.15cm) .. (ph')
	.. controls +(-30:0.15cm) and +(90:0.15cm) .. (0.3,-0.6);
\draw (ph)
	.. controls +(180:0.3cm) and +(-90:0.2cm) .. (-0.6,0.6);
\draw (ph')
	.. controls +(180:0.5cm) and +(-90:0.4cm) .. (-0.8,0.6);
\node at (0,0) {\small $\ldots$};
\node at (0,0.5) {\small $\ldots$};
\node at (0,-0.55) {\small $\ldots$};
\node at (-0.6,0.7) {\tiny $A$};
\node at (-0.8,0.7) {\tiny $B$};
\node at (0.3,0.3) {\tiny $\vphi$};
\node at (0.3,-0.28) {\tiny $\vphi'$};
\end{tikzpicture}
\in
V_{(i_1,\ldots,i_g)}^{(BA)}
.
\end{equation}
\hfill $\triangle$
\end{definition}

\begin{definition}
\label{d:generalized-fusion-product}
For $\vphi \in V_{(i_1,\ldots,i_g)}^{(A)},
\vphi' \in V_{(j_1,\ldots,j_g)}^{(B)}$,
we define their
\emph{generalized fusion product}
to be given

\begin{equation}
\vphi \cdot \vphi' :=
\sum_{k_1,\ldots,k_g}
d_{k_1} \cdots d_{k_g}
\label{e:solid-handlebody-Yprod-1-CY-alt}
\begin{tikzpicture}
\node[dotnode] (ph) at (0,0.1) {};
\node[dotnode] (ph') at (0.5, -0.1) {};
\node[dotnode] (a1) at (-0.2,0.5) {};
\node[dotnode] (a1') at (-0.2,-0.5) {};
\node[dotnode] (ag) at (0.7,0.5) {};
\node[dotnode] (ag') at (0.7,-0.5) {};
\draw (ph)
	.. controls +(180:0.3cm) and +(down:0.8cm) .. (-0.8,1);
\draw (ph')
	.. controls +(180:0.8cm) and +(down:1cm) .. (-1.2,1);
\draw (ph') -- (a1);
\draw (ph') -- (ag);
\draw (ph') -- (a1');
\draw (ph) -- (a1);
\draw[thin_overline={1}] (ph) -- (ag);
\draw[thin_overline={1}] (ph) -- (a1');
\draw[thin_overline={1}] (ph) -- (ag');
\draw (ph') -- (ag');
\draw (a1) -- +(up:0.5cm);
\draw (ag) -- +(up:0.5cm);
\draw (a1') -- +(down:0.5cm);
\draw (ag') -- +(down:0.5cm);
\node at (-0.2,1.1) {\tiny $k_1$};
\node at (-0.2,-1.1) {\tiny $k_1$};
\node at (0.7,1.1) {\tiny $k_g$};
\node at (0.7,-1.1) {\tiny $k_g$};
\node at (-0.8,1.1) {\tiny $A$};
\node at (-1.2,1.1) {\tiny $B$};
\node at (0.25,0.9) {$\ldots$};
\node at (0.25,0.5) {$\ldots$};
\node at (0.25,-0.5) {$\ldots$};
\node at (0.25,-0.9) {$\ldots$};
\node at (-0.38,0.5) {\tiny $\alpha_1$};
\node at (-0.38,-0.5) {\tiny $\alpha_1$};
\node at (0.9,0.5) {\tiny $\alpha_g$};
\node at (0.9,-0.5) {\tiny $\alpha_g$};
\node at (0.7,-0.1) {\tiny $\vphi'$};
\node at (-0.2,0.2) {\tiny $\vphi$};
\end{tikzpicture}
\in V^{(BA)}.
\end{equation}


\hfill $\triangle$
\end{definition}

Now we apply the Crane-Yetter functor
to the relative cobordisms
$\YY_{X_g}^{(1)}, \YY_{X_g}^{(2)}, \Psi', \ov{\Psi}'$
from \secref{s:topology-examples-handlebody}.
Since we are considering skeins with boundary values,
we cannot simply put the empty skein
in the vertical boundaries as we did before.
In the examples below,
our choice of skein in the vertical boundary
will be very simple, essentially just the ``identity''.

\begin{example}
\label{x:Psi-S-matrix-handlebody}
Recall that $\Psi_g', \ov{\Psi}_g'$
are designed to be the identity in a neighborhood
of the marked point on the boundary,
so $\ZCY(\Psi_g'), \ZCY(\ov{\Psi}_g')$
are well-defined as linear maps from $\ZCY(X_g;A)$ to itself.
From \figref{f:solid-handlebody-S-matrix-skein},
we can see that,
under Crane-Yetter, the cornered cobordisms
$\Psi_g', \ov{\Psi}_g'$
realize the generalized $S$,$\ov{S}$-matrices on
$\ZCY(X_g;A)$;
that is,
$\ZCY(\Psi_g')(\vphi) = s(\vphi)$,
$\ZCY(\ov{\Psi}_g')(\vphi) = \ov{s}(\vphi)$.

In relation to $\Psi_g,\ov{\Psi}_g$,
recall that the marked point at $p:=(0,0)$
should come with a framing, say tangent to $\vec{l}$;
then $\Psi_g$
defines a linear map from $\ZCY(X_g;(A,p,\vec{l}))$
to $\ZCY(X_g;(A,p,-\vec{m}))$,
and $\ov{\Psi}_g$ defines a linear map from
$\ZCY(X_g;(A,p,\vec{l}))$
to $\ZCY(X_g;(A,p,\vec{m}))$.
Then we may get back to $\ZCY(X_g;(A,p,\vec{l}))$
by applying a diffeomorphism from $X_g$ to itself
that untwists the framing on $p$.
Alternatively,
we may compose with a relative cobordism
which is trivial topologically,
but on the vertical boundary,
we have a skein that untwists the marked point.

\begin{figure}[h] 
\centering
\includegraphics[width=15cm]{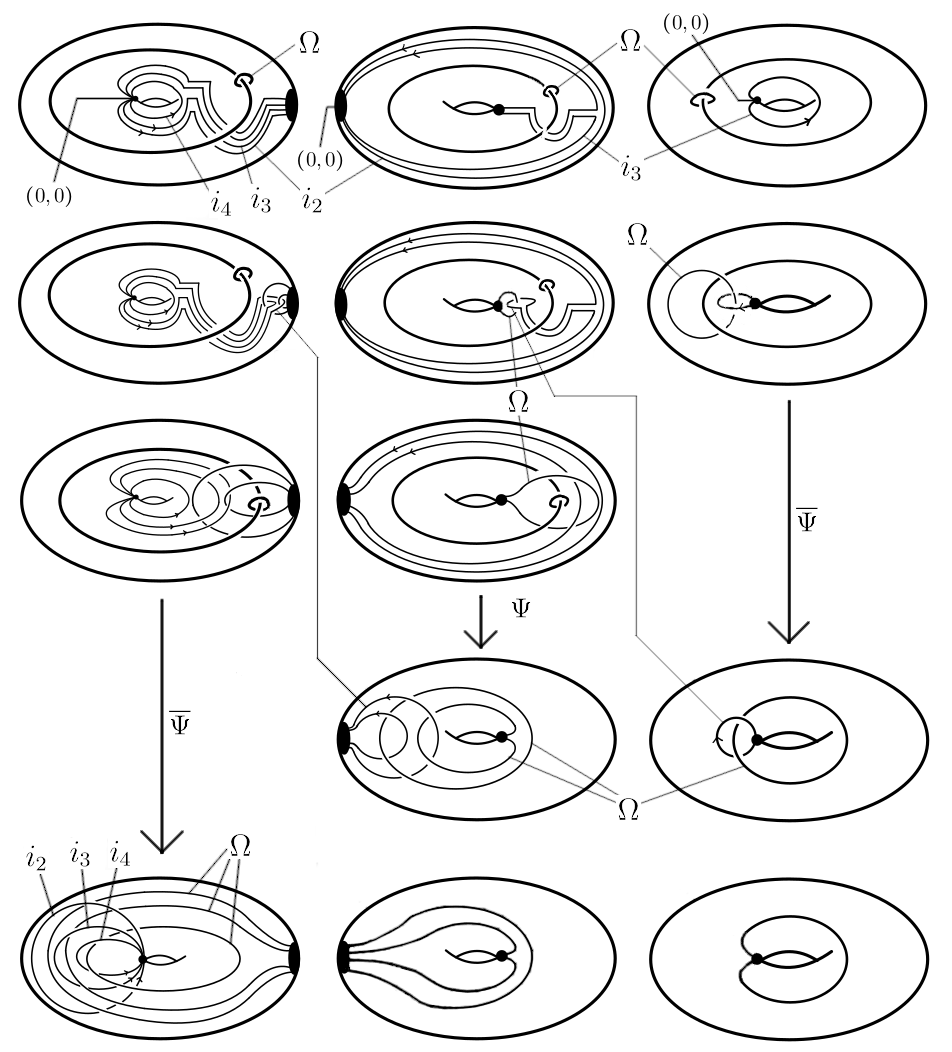}
\caption{
Computation of $\ZCY(\Psi)$ for genus $g=4$;
for odd genus the argument is essentially the same.
The top row represents the skeins after adding the 2-handles.
In the bottom row,
we see that the skeins labeled with $i_*$'s
criss-cross with the skeins labeled with $\Omega$.
This is used inductively
(we proceed from right to left) - for the torus to its left,
we know that when we pull the skeins in,
we get the criss-cross pattern,
as we see in the second row.
}
\label{f:solid-handlebody-S-matrix-skein}
\end{figure}
\hfill $\triangle$
\end{example}

\begin{example}
\label{x:cy-handlebody-Yprod-1}
Similar to \xmpref{x:cy-Yprod-1},
the half-handles that make up $\YY_{X_g}^{(1)}$
are of negative type,
so the most interesting part for skeins is
the 3-handles.
In the last copy of $X$ in $X_g$,
the 3-handle cuts a pair of skeins labeled
$i_k$ and $j_k$, where $k=\lfloor g/2 \rfloor + 1$,
which forces $i_k = j_k$
and gives a factor of $1/d_{i_k}$.
Upon simplifying the skeins,
we see that in the penultimate copy of $X$,
the 3-handle again cuts only a pair of skeins,
labeled $i_{k'}$ and $j_{k'}$ for some $k'$
($= k+1$ or $k-1$ depending on parity of $g$),
which forces $i_{k'} = j_{k'}$
and gives a factor of $1/d_{i_{k'}}$.
Repeat this procedure,
we see that we end up with
$\ZCY(\YY_{X_g}^{(1)})(\vphi \tnsr \vphi')
= \vphi * \vphi'$.
In other words, under Crane-Yetter,
$\YY_{X_g}^{(1)}$ realizes the generalized convolution product.
\hfill $\triangle$
\end{example}

%
%

\begin{example}
\label{x:cy-handlebody-Yprod-2}
Similar to \xmpref{x:cy-Yprod-2},
it is easy to see that $\YY_{X_g}^{(2)}$
stacks one $X_g$ on top of another,
and so $\ZCY(\YY_{X_g}^{(2)})(\vphi \tnsr \vphi')$
simply stacks $\vphi$ on top of $\vphi'$.
Using methods similar to the computation of
$\ZCY(\YY_X^{(2)})$
(see \figref{f:solid-torus-Yprod-2-CY}),
and adding an appropriate identity morphism
in the vertical boundary
to combine marked points into one,
we see that, under Crane-Yetter,
$\YY_{X_g}^{(2)}$ realizes the generalized fusion product:
$\ZCY(\YY_{X_g}^{(2)})(\vphi \tnsr \vphi')
= \vphi \cdot \vphi'$.
\hfill $\triangle$
\end{example}

%

\begin{theorem}
\label{t:main-handlebody}
Under the identification of the
internal genus $g$ Verlinde algebra $V_g^{(-)}$
with the skein module functor $\ZCY(X_g;-)$,
the generalized fusion product,
generalized convolution product,
and generalized $S,\ov{S}$-matrices on $V_g^{(-)}$
are the results of evaluating the Crane-Yetter functor
on the relative cobordisms
$\YY_{X_g}^{(2)},\YY_{X_g}^{(1)}: X_g \sqcup X_g \to X_g$,
and $\Psi_g',\ov{\Psi}_g': X_g \to X_g$, respectively.

Thus, since the Y-products are related by
$\ov{\Psi}_g'$ (resp. $\Psi_g'$),
the generalized $\ov{S}$-matrix (resp. $S$-matrix)
is an algebra morphism from
$V_g$ with the (resp. opposite) generalized fusion product to
$V_g$ with the generalized convolution product.
\end{theorem}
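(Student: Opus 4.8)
The plan is to run the argument of \thmref{t:main} one genus up, replacing each torus-level ingredient by its handlebody analogue and then pushing the topological identities of \prpref{p:Psi-Yprod-equiv-handlebody} through the Crane-Yetter functor.

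The first assertion is simply a repackaging of the preceding examples. Under the natural isomorphism $V_g^{(-)} \simeq \ZCY(X_g;-)$ furnished by \lemref{l:V-ZCY-Xg}, \xmpref{x:cy-handlebody-Yprod-1} identifies $\ZCY(\YY_{X_g}^{(1)})$ with the generalized convolution product, \xmpref{x:cy-handlebody-Yprod-2} identifies $\ZCY(\YY_{X_g}^{(2)})$ with the generalized fusion product, and \xmpref{x:Psi-S-matrix-handlebody} identifies $\ZCY(\Psi_g'), \ZCY(\ov{\Psi}_g')$ with the generalized $S$- and $\ov{S}$-matrices. I would phrase these as equalities of natural transformations of functors $\cA^{op} \to \textrm{Vec}$, so that the algebra statements below hold internally and, upon restriction to $A = \one$, specialize to the operations on $V_g$ named in the theorem.

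For the second assertion I would apply $\ZCY$ to the two equalities of relative cobordisms in \prpref{p:Psi-Yprod-equiv-handlebody}, using that $\ZCY$ is functorial under composition and sends disjoint unions to tensor products (exactly as invoked in the proof of \thmref{t:main}). Evaluating the first equality on $\vphi \tnsr \vphi'$ and substituting the identifications gives $\ov{s}(\vphi) * \ov{s}(\vphi') = \ov{s}(\vphi \cdot \vphi')$, so $\ov{s}$ is an algebra map from the generalized fusion product to the generalized convolution product. For the second equality the swap $P$ can no longer be discarded: at higher genus the generalized fusion product is noncommutative, and the computation yields $s(\vphi) * s(\vphi') = s(\vphi' \cdot \vphi)$, which says precisely that $s$ is an algebra map out of the opposite fusion product $\cdot^{\mathrm{op}}$. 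This recovers the promised \emph{(resp. opposite)} in the two cases.

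The step I expect to demand real care is reconciling the unprimed $\Psi_g, \ov{\Psi}_g$ of \prpref{p:Psi-Yprod-equiv-handlebody} with the primed $\Psi_g', \ov{\Psi}_g'$ that appear in the theorem and in \xmpref{x:Psi-S-matrix-handlebody}. The primed cobordisms differ from the unprimed ones only by a local twist supported near the marked point $(0,0)$, inserted so that they fix a neighborhood of that point and hence descend to honest endomorphisms of $\ZCY(X_g;A)$ (the bare $\Psi_g$ instead carries the $\vec{l}$-framed marked point to a $-\vec{m}$-framed one, so it is not even an endomorphism without this correction). I would check that this local modification can be inserted compatibly on both sides of each equality in \prpref{p:Psi-Yprod-equiv-handlebody}: the handle cancellations establishing those equalities (cf.\ the $2$--$3$ handle cancellation in \prpref{p:Psi-Yprod-equiv}) are all supported away from the marked point, so the twist commutes past them and the relation is preserved verbatim. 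Equivalently, one re-runs the solid-torus proof with the primed identifications already in force. With that local check dispatched, the remainder is formal bookkeeping.
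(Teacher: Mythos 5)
Your overall strategy coincides with the paper's: the first assertion is assembled from \lemref{l:V-ZCY-Xg} together with Examples \ref{x:cy-handlebody-Yprod-2}, \ref{x:cy-handlebody-Yprod-1}, \ref{x:Psi-S-matrix-handlebody}, and the second assertion is obtained by pushing the two cobordism identities of \prpref{p:Psi-Yprod-equiv-handlebody} through $\ZCY$, with the swap $P$ producing the opposite fusion product on one side (your bookkeeping here, $\ov{s}$ with the straight fusion product and $s$ with the opposite one, is the version consistent with the theorem as stated).

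The gap is in the step you yourself flag as delicate: the reconciliation of $\Psi_g,\ov{\Psi}_g$ with $\Psi_g',\ov{\Psi}_g'$. Your claim that the twist near the marked point ``commutes past'' the handle cancellations, so that the identities of \prpref{p:Psi-Yprod-equiv-handlebody} hold \emph{verbatim} for the primed cobordisms and hence the algebra relations hold internally for all boundary labels $A,B$, is not correct. On the left-hand sides of those identities the untwisting is performed separately at the two incoming marked points, whereas on the right-hand sides a single untwisting is performed at the outgoing marked point, after the Y-product has merged the two marked points (and their $A$- and $B$-strands in the vertical wall) into one; by the ribbon/cabling discrepancy between twisting two strands individually and twisting the merged strand, these differ by braidings of the $A$- and $B$-strands. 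This is exactly why the paper's own direct inspection of the non-empty-boundary case yields a braiding-corrected identity, $\ov{s}(x) * \ov{s}(y) = c_{B,A} \circ \ov{s}(y \cdot x)$ for $x \in V_g^{(A)}$, $y \in V_g^{(B)}$, rather than a verbatim transcription of the empty-boundary formula. The paper sidesteps your difficulty by proving the algebra-morphism claim only where the theorem asserts it, namely on $V_g$, i.e.\ at $A = B = \one$: there the marked point carries the trivial label, the untwisting is invisible to $\ZCY$, and the unprimed proposition can be applied directly; the internal statement is then recorded separately \emph{with} the correction $c_{B,A}$. Your proof of the stated theorem survives once you make this restriction explicit, but the locality argument as written would fail if carried out for general $A,B$, and the natural-transformation-level claim in your second paragraph is false as stated.
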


\begin{proof}
Examples \ref{x:cy-handlebody-Yprod-2},
\ref{x:cy-handlebody-Yprod-1},
\ref{x:Psi-S-matrix-handlebody}
prove the first part of the statement.

\prpref{p:Psi-Yprod-equiv-handlebody} shows that
the Y-products are related by $\Psi_g, \ov{\Psi}_g$;
by evaluating under Crane-Yetter
for empty boundary conditions
(i.e. $A = B = \one$), we get
\begin{align*}
\ZCY(\YY_{X_g}^{(1)}) \circ
(\ZCY(\ov{\Psi}_g) \tnsr \ZCY(\ov{\Psi}_g))
&=
\ZCY(\ov{\Psi}_g) \circ \ZCY(\YY_{X_g}^{(2)})
\\
\ZCY(\YY_X^{(1)}) \circ (\ZCY(\Psi_g) \tnsr \ZCY(\Psi_g))
&=
\ZCY(\Psi_g) \circ \ZCY(\YY_X^{(2)}) \circ P
\end{align*}
where here $P$ is the swapping of factors for tensor product
of vector spaces;
thus, for $x, y \in V_g$,
\begin{align*}
s(x) * s(y) &= s(x \cdot y)
\\
\ov{s}(x) * \ov{s}(y) &= \ov{s}(y \cdot x) .
\end{align*}

Now for non-empty boundary condition,
we need to use $\Psi_g', \ov{\Psi}_g'$
instead of $\Psi_g, \ov{\Psi}_g$.
As discussed in \xmpref{x:Psi-S-matrix-handlebody},
the former is obtain from the latter by
untwisting the marked point on the boundary.
It is clear by direct inspection that
for $x \in V_g^{(A)}, y \in V_g^{(B)}$,
\begin{align*}
s(x) * s(y) &= s(x \cdot y)
\\
\ov{s}(x) * \ov{s}(y) &= c_{B,A} \circ \ov{s}(y \cdot x) .
\end{align*}
\end{proof}


\section{Relation to Yetter's Handle as Hopf Algebra}
\label{s:yetter-portrait}

We would like to say a few words about
the relation of our work to
Yetter's work on the handle as a Hopf algebra
\cite{yetter1997portrait}.

In \cite{yetter1997portrait},
Yetter constructs a formal Hopf algebra structure
on the torus with one boundary component
(which we refer to as $H$ for the rest of this discussion).
He considers the category $S^1$,
where the objects are surfaces with boundary identified
with $S^1$,
and morphisms are cornered cobordisms.
The tensor product of two surfaces $N_1 \tnsr N_2$
is $(N_1 \sqcup N_2) \cup POP$ (the pair of pants).
Then the multiplication and comultiplication on $H$
are given by cornered cobordisms
$m: H \tnsr H \to H$ and $\Delta: H \to H \tnsr H$
(see \figref{f:yetter-portrait-fig7}).

\begin{figure}[h] 
\centering
\includegraphics[width=8cm]{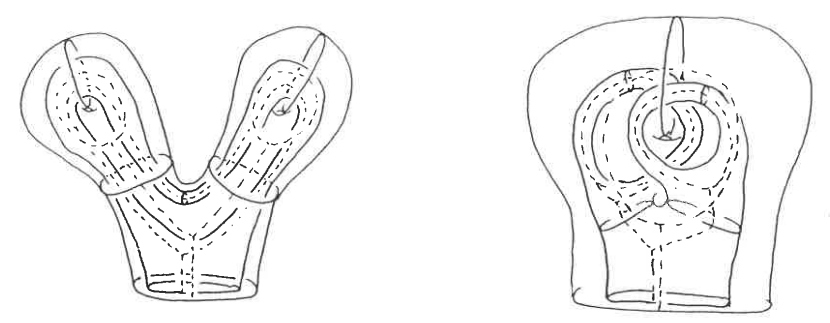}
\caption{Multiplication and comultiplication on $H$,
from \cite{yetter1997portrait}*{Figure 7}.
}
\label{f:yetter-portrait-fig7}
\end{figure}

We would like to describe $H$ as a bialgebra
in terms of the Y-product.
In order to do so, we will need to modify
the notion of Y-product slightly.
Before we describe the variant,
notice that the multiplication $m$
in \figref{f:yetter-portrait-fig7}
is similar to $\YY_X^{(1)}$.
Indeed, it is simply $\YY_H$
considered as a cornered cobordism with corner given by
the outgoing corner.
Likewise, the coproduct on $H$ looks similar to $\YY_X^{(2)}$,
and can be given by $\YY_H$ considered as
a cornered cobordism with incoming corner of $\YY_H$ as corner.

Given $Q = M \cup_N \ov{M}$,
we define the \emph{reduced Y-product},
denoted $\YYbar_Q$,
to be $\YY_Q$ as a cornered cobordism
with corner given by the outgoing corner $\del Q$.
The incoming boundary of $\YYbar_Q$ is
$Q \# Q$,
where for $Q_1,Q_2$ with identifications of their boundaries
with $\del Q$,
$Q_1 \# Q_2 := (Q_1 \sqcup Q_2) \cup \YY_{\del Q}$
(here $\YY_{\del Q}$ is the vertical boundary of $\YY_Q$;
$\#$ would be $\tnsr$ in Yetter's context).

In terms of half-handle decomposition,
suppose we are given a half-handle decomposition of $M$
as a relative cobordism
$(M, N') : \emptyset \to (N,P=\del N)$
such that all half-handles are of negative type
and come before regular handles.
Such half-handle decomposition always exists:
any handle decomposition for the vertical boundary $N'$
as a cobordism $\emptyset \to P$
can be upgraded to a half-handle decomposition for
the relative cobordism
$(N' \times I, N') : \emptyset \to (N', P)$
by changing each $k$-handle to a $k^-$-half-handle,
then any handle decomposition for the cornered cobordism
$(M;P) : N' \to N$ will complete the
desired half-handle decomposition.

We can build, as in \secref{s:half-handle-decomp-Yprod},
a half-handle decomposition for $Q$,
which we may breakdown to a sequence of cobordisms
$\emptyset \to N' \to N \to N' \to \emptyset$.
Obviously, $Q \sqcup Q$ can be given the
half-handle decomposition which is the
concatenation of two copies of that sequence.
By attaching $\YY_{\del Q}$ to $Q \sqcup Q$,
it is not hard to see that the middle $N' \to \emptyset \to N'$
is canceled out, leaving the sequence
$\emptyset \to N' \to N \to N' \to N \to N' \to \emptyset$
as a half-handle decomposition for $\YYbar_Q$.

This operation of attaching $\YY_{\del Q}$ to $Q \sqcup Q$
is a relative cobordism $Q \sqcup Q \to \YYbar_Q$;
it is clear from the construction of the full Y-product
and our choice of half-handle decomposition for $M$
that it is contained in
the full Y-product $\YY_Q: Q \sqcup Q \to Q$.
In other words, we may apply the same construction
for $\YY_Q$ to get a half-handle decomposition
for $\YYbar_Q$,
by simply restricting to adding handles for the latter half
of $M: \emptyset \to N' \to N$.

As a concrete example,
we can build $H$ from the sequence of (half-)handles
$0^-, 1, 1, 1^+$.
The resulting $H \# H$ has (half-)handles
$0^-,1,1,1,1,1^+$.
Then $\YYbar_H$ is given by one 2-handle
that eliminates the middle two 1-handles of $H \# H$.

The \emph{reduced Y-coproduct, unit, and counit},
denoted $\coYYbar_Q, \iibar_Q$, and $\coiibar_Q$
respectively, are also given by similar considerations.
Note that the unit object for $\#$ is not $\emptyset$,
but $N' \times I$,
which has boundary $N' \cup_P \ov{N'}$.
The unit $\ii_Q$ is a cornered cobordism
from $N' \times I$ to $Q$.

The following is a generalization
of the bialgebra structure on $H$,
in particular, it applies to any connected surface
with one boundary component:

\begin{proposition}
\label{p:Q-bialg}
Let $Q$ be a connected $n$-manifold
with boundary a sphere $S^{n-1}$,
and suppose that $Q$ can be presented as the double
of a manifold in two ways, say
$Q = M_1 \cup_{N_1} \ov{M_1} = M_2 \cup_{N_2} \ov{M_2}$,
such that $N_1$ and $N_2$ intersect transversely.
Furthermore, suppose that $M_0 := M_1 \cap M_2$
is diffeomorphic to a ball (after smoothing corners),
and $M_0 \cap \del Q, M_1 \cap \del Q, M_2 \cap Q$
are $(n-1)$-balls (after smoothing corners).
Then the Y-product from $M_1$, $\YYbar_Q^{(1)}$,
and Y-coproduct from $M_2$, $\coYYbar_Q^{(2)}$,
obey the bialgebra relation.
\end{proposition}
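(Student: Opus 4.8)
The plan is to prove the relation by realizing both composites as a single cornered cobordism built from the two doublings at once. Write $m = \YYbar_Q^{(1)} : Q \# Q \to Q$ for the reduced Y-product coming from $M_1$ and $\Delta = \coYYbar_Q^{(2)} : Q \to Q \# Q$ for the reduced Y-coproduct coming from $M_2$; the bialgebra relation is the statement that $\Delta$ is an algebra map, i.e. $\Delta \circ m \simeq (m \# m) \circ \sigma \circ (\Delta \# \Delta)$ as cornered cobordisms $Q \# Q \to Q \# Q$, where $\sigma$ interchanges the two inner copies of $Q$. Here the two monoidal structures agree: since $\del Q = S^{n-1}$ is the double of the ball $M_1 \cap \del Q$ along the equator $N_1 \cap \del Q$, and equally the double of the ball $M_2 \cap \del Q$, the vertical-boundary Y-products $\YY_{\del Q}^{(1)}$ and $\YY_{\del Q}^{(2)}$ defining $\#$ coincide (both are the Y-product on $S^{n-1}$ from doubling a ball). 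First I would record the four-piece decomposition of $Q$: because $N_1$ and $N_2$ meet transversely and $Q$ is the double across each, the union $N_1 \cup N_2$ cuts $Q$ into $M_0 = M_1 \cap M_2$, $M_1 \cap \ov{M_2}$, $\ov{M_1} \cap M_2$, and $\ov{M_1} \cap \ov{M_2}$, while on $\del Q = S^{n-1}$ the two equators cut the sphere into four $(n-1)$-balls, one of which is $M_0 \cap \del Q$.

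The key construction is a two-parameter version of the folding fan of \secref{s:half-handle-decomp-Yprod}. Where $\YY_Q$ sweeps $M$ through one extra angular coordinate to fold $Q \sqcup Q$ together along $N_1$, I would sweep simultaneously through two angular coordinates, one normal to $N_1$ (the $M_1$-direction) and one normal to $N_2$ (the $M_2$-direction). Transversality of $N_1$ and $N_2$ gives a local product structure, and since $M_0$ is a ball this product structure has no monodromy and extends globally; the resulting ``bi-fan'' $W$ is then a well-defined cornered cobordism with both incoming and outgoing boundary equal to $Q \# Q$. The whole proof reduces to showing that carrying out the two sweeps in either order recovers $W$ up to diffeomorphism rel boundary.

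I would then match each order. Sweeping in the $N_1$-direction first merges the two input copies into one $Q$ and, by \lemref{l:negative-half-handle-identity}, the negative-type half-handles involved only reshape the vertical wall, so this first sweep is exactly $m$; sweeping in the $N_2$-direction next splits along $N_2$ and is exactly $\Delta$, giving $\Delta \circ m \simeq W$. Sweeping in the $N_2$-direction first on each input copy performs $\Delta \# \Delta$, and sweeping in the $N_1$-direction afterwards on the recombined copies performs $m \# m$; the permutation $\sigma$ appears precisely because interchanging the order of the two sweeps reorders the four intermediate copies of $Q$, and one checks this reordering is the transposition of the two inner factors. Commuting the sweeps — legitimate because $N_1$ and $N_2$ are transverse and $M_0$ is a ball — then yields $\Delta \circ m \simeq (m \# m) \circ \sigma \circ (\Delta \# \Delta)$.

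The hard part will be making the commutation of the two sweeps rigorous together with the corner bookkeeping: one must verify that the corner-smoothings performed when attaching each thickening are compatible for the two orders, that the $\#$-gluings along the vertical boundary agree on the nose at both the incoming and outgoing ends — this is exactly where the splitting of $\del Q = S^{n-1}$ into four balls is used — and that the induced permutation of copies really is $\sigma$ and nothing larger. A more computational alternative, in the spirit of \prpref{p:Psi-Yprod-equiv} and \prpref{p:Psi-Yprod-equiv-rev}, is to write both composites as explicit half-handle decompositions and match them by handle slides and $2$--$3$ cancellations, the ball hypotheses guaranteeing that the cancelling pairs exist; but the bi-fan argument makes transparent why the relation holds and why transversality of $N_1$ and $N_2$ is indispensable.
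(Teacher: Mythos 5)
Your proposal does not follow the paper's route, and its central step has a genuine gap. The paper proves the relation by presenting both sides explicitly: writing $\Xi_1 := \coYYbar_Q^{(2)} \circ \YYbar_Q^{(1)}$ and $\Xi_2 := (\YYbar_Q^{(1)} \#^{(2)} \YYbar_Q^{(1)}) \circ (\coYYbar_Q^{(2)} \#^{(1)} \coYYbar_Q^{(2)})$, it decomposes each as copies of $M_0 \times I$ glued onto a two-dimensional spine, and then observes that the pieces of $\Xi_1$ all have counterparts in $\Xi_2$, but $\Xi_2$ contains exactly one extra piece --- the ``heart''. The decisive step is that the heart is a \emph{trivial cornered cobordism} --- this is where the hypothesis that $M_0$ is an $n$-ball, together with the ball conditions on $M_i \cap \del Q$, does its real work --- so it can be folded flat and its ``fins'' shrunk by the unit property of $\wdtld{\ii}$, turning $\Xi_2$ into $\Xi_1$ (Equations \eqref{e:diagram-collapse-heart}, \eqref{e:diagram-after-folding}). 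In particular, $\Xi_1$ and $\Xi_2$ are \emph{not} piece-by-piece identical, so no argument that treats them as ``the same construction carried out in two orders'' can succeed as stated.

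This is precisely where your bi-fan breaks down. A simultaneous sweep of $M_0$ through two independent angular coordinates produces an $(n+2)$-dimensional object, not an $(n+1)$-dimensional cornered cobordism; what your construction actually yields is a two-parameter \emph{family}, and a cobordism only arises after choosing a path through the parameter square. The two orders of sweeping are the two boundary paths of that square, and the cobordisms they trace out are exactly $\Xi_1$ and $\Xi_2$ --- so the claim that ``carrying out the two sweeps in either order recovers $W$ up to diffeomorphism rel boundary'' is not a consequence of the construction; it is a restatement of the proposition. Likewise, your sentence ``Commuting the sweeps --- legitimate because $N_1$ and $N_2$ are transverse and $M_0$ is a ball'' places the conclusion where the proof should be: transversality only gives the four-piece decomposition of $Q$, while the ball hypothesis enters not through ``absence of monodromy'' but through the collapse of the heart (and through guaranteeing that pieces such as the one labeled (*) in \eqnref{e:prod-then-coprod} really are copies of $M_0 \times I$), and your proposal contains no counterpart of that step. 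Two further inaccuracies point the same way: the monoidal structures $\#^{(1)}$ and $\#^{(2)}$ do not coincide --- they are attachments of $\YY_{\del Q}$ along different, transverse equatorial decompositions of $S^{n-1}$, at best related by a nontrivial diffeomorphism --- and what the proof actually needs is the interchange compatibility $(Q \#^{(1)} Q) \#^{(2)} (Q \#^{(1)} Q) = (Q \#^{(2)} Q) \#^{(1)} (Q \#^{(2)} Q)$ of \eqnref{e:diagram-bialg-QQQ}, which is also why the topological statement involves no explicit swap $\sigma$, contrary to your formulation. If you made your bi-fan rigorous as a stratified gluing of $M_0 \times I$ pieces along a spine, you would find yourself reconstructing the paper's argument, heart and all.
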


\begin{proof}
Denote by $N_1' := M_1 \cap \del Q, N_2' := M_2 \cap \del Q$,
or equivalently they are the
closures of $M_1 \backslash N_1, M_2 \backslash N_2$,
and denote by $N_0' := M_0 \cap \del Q$;
by hypothesis, they must be $(n-1)$-balls.

Denote $P := N_1 \cap N_2$;
it separates $N_1$ into two pieces,
each a mirror copy of the other,
and we denote one by $N_{10}$.
Similarly, $P$ separates $N_2$ into $N_{20}$ and $\ov{N_{20}}$.

We denote by $P_1, P_2$ the closures of
$\del N_{10} \backslash P, \del N_{20} \backslash P$.

We represent $Q$ schematically as follows:
\begin{equation}
\label{e:diagram-bialg-Q}
\begin{tikzpicture}
\draw (-0.7,0) -- (0.7,0);
\draw (0,0.7) -- (0,-0.7);
\draw (0,0.7)
	.. controls +(180:0.6cm) and +(90:0.6cm) .. (-0.7,0);
\draw (0,0.7)
	.. controls +(0:0.6cm) and +(90:0.6cm) .. (0.7,0);
\draw (0,-0.7)
	.. controls +(180:0.6cm) and +(-90:0.6cm) .. (-0.7,0);
\draw (0,-0.7)
	.. controls +(0:0.6cm) and +(-90:0.6cm) .. (0.7,0);
\node at (-0.35,0.35) {\tiny $\overline{M_0}$};
\node at (0.35,0.35) {\tiny $M_0$};
\node at (-0.35,-0.35) {\tiny $M_0$};
\node at (0.38,-0.35) {\tiny $\overline{M_0}$};
\draw[decorate,decoration={brace,mirror,amplitude=2pt}]
	(-0.7,-0.8) -- (-0.1,-0.8);
\draw[decorate,decoration={brace,mirror,amplitude=2pt}]
	(0.1,-0.8) -- (0.7,-0.8);
\node at (0.4,-1.1) {\tiny $\overline{M_1}$};
\node at (-0.4,-1.1) {\tiny $M_1$};
\draw[decorate,decoration={brace,amplitude=2pt}]
	(-0.8,0.1) -- (-0.8,0.7);
\draw[decorate,decoration={brace,amplitude=2pt}]
	(-0.8,-0.7) -- (-0.8,-0.1);
\node at (-1.1,-0.4) {\tiny $M_2$};
\node at (-1.1,0.4) {\tiny $\overline{M_2}$};
\draw[line width=0.1mm]
	(0.05,-1.25) -- (0.05,-0.6) -- (0,-0.55);
\node at (0.05,-1.4) {\tiny $N_{10}$};
\draw[line width=0.1mm]
	(0.05,0.9) -- (0.05,0.6) -- (0,0.55);
\node at (0.05,1.1) {\tiny $\overline{N_{10}}$};
\draw[line width=0.1mm]
	(-0.9,-0.05) -- (-0.6,-0.05) -- (-0.55,0);
\node at (-1.2,0) {\tiny $N_{20}$};
\draw[line width=0.1mm]
	(0.9,-0.05) -- (0.6,-0.05) -- (0.55,0);
\node at (1.2,0) {\tiny $\overline{N_{20}}$};
\node[dotnode] at (0,0) {};
\draw[line width=0.1mm]
	(0,0) -- (1,0.4);
\node at (1.1,0.4) {\tiny $P$};
\end{tikzpicture}
\;\;\;
;
\;\;\;
\begin{tikzpicture}
\begin{scope}[shift={(0.35,0.35)}]
\draw (-0.7,0) -- (0,0) -- (0,-0.7);
\draw (-0.7,0) .. controls +(-90:0.6cm) and +(180:0.6cm) ..
	(0,-0.7);
\draw (-0.53,-0.53) -- (-0.63,-0.63);
\node at (-0.35,-0.35) {\tiny $M_0$};
\node at (-1,-0.3) {\tiny $\mathcal{D}^{n-1}$};
\node at (-0.3,-0.95) {\tiny $\mathcal{D}^{n-1}$};
\node at (0.1,0.1) {\tiny $P$};
\node at (-0.35,0.15) {\tiny $N_{20}$};
\node at (0.25,-0.35) {\tiny $N_{10}$};
\node at (-0.85,0.1) {\tiny $P_2$};
\node at (0.15,-0.8) {\tiny $P_1$};
\node at (-0.7,-0.7) {\tiny $P_0$};
\end{scope}
\end{tikzpicture}
\begin{tikzpicture}
\draw (0,-0.6) -- (0,0.6);
\draw (0,-0.6) .. controls +(-120:0.5cm) and +(-120:0.5cm) ..
	(0,0.6);
\draw (0,0.6) arc (90:180:0.6);
\draw[line width=0.2mm] (-0.6,0) arc (180:270:0.6);
\draw (-0.6,0) .. controls +(-90:0.35cm) and +(170:0.3cm) ..
	(-0.1,-0.7);
\draw[line width=0.1mm] (0,0.6)
	.. controls +(-160:0.5cm) and +(90:0.1cm) ..
	(-0.45,-0.25)
	.. controls +(-90:0.1cm) and +(150:0.1cm) ..
	(-0.3,-0.62)
	.. controls +(-30:0.1cm) and +(-150:0.1cm) ..
	(0,-0.6);
\node at (0.3,0) {\tiny $P$};
\draw[line width=0.1mm] (0,0) -- (0.2,0);
\node at (0.45,-0.45) {\tiny $N_{10}$};
\draw[line width=0.1mm] (-0.1,-0.3) -- (0.2,-0.4);
\node at (0.35,0.45) {\tiny $N_{20}$};
\draw[line width=0.1mm] (-0.05,0.2) -- (0.1,0.4);
\node at (-0.8,-0.6) {\tiny $\mathcal{D}^{n-1}$};
\draw[line width=0.1mm] (-0.6,-0.7) -- (-0.4,-0.7) -- (-0.3,-0.3);
\draw[line width=0.1mm] (-0.9,-0.5) -- (-0.7,0) -- (-0.5,0);
\node at (-0.8,0.6) {\tiny $P_2$};
\draw[line width=0.1mm] (-0.7,0.5) -- (-0.51,0.3);
\node at (-0.2,0.8) {\tiny $P_1$};
\draw[line width=0.1mm] (-0.2,0.7) -- (-0.2,0.35) -- (-0.11,0.3);
\node at (-0.5,0.7) {\tiny $P_0$};
\draw[line width=0.1mm] (-0.5,0.6) -- (-0.35,0.3);
\end{tikzpicture}
\;\;\;
;
\;\;\;
\begin{tikzpicture}
\begin{scope}[shift={(0,-0.5)}]
\draw (0,1) -- (0,0) -- (1,0);
\draw (1,0) arc (0:90:1);
\begin{scope}[line width=0.2mm]
\draw (0.25,0) to[out=90,in=-112.5] (0.383,0.924);
\draw (0.5,0) to[out=90,in=-135] (0.707,0.707);
\draw (0.75,0) to[out=90,in=-157.5] (0.924,0.383);
\end{scope}
\node at (-0.1,-0.1) {\tiny $P_0$};
\node at (1.15,-0.1) {\tiny $P_1$};
\node at (-0.15,1.1) {\tiny $P_2$};
\node at (0.4,1.3) {\tiny $N_{20}$};
\node at (-0.35,0.5) {\tiny $\mathcal{D}^{n-1}$};
\node at (0.5,-0.2) {\tiny $\mathcal{D}^{n-1}$};
\draw[line width=0.1mm] (0.25,1.2) -- (0.2,0.97);
\draw[line width=0.1mm] (0.5,1.15) -- (0.85,0.8) -- (0.85,0.55);
\node at (1,1.2) {\tiny $P$};
\draw[line width=0.1mm] (0.9,1.2) -- (0.383,0.924);
\draw[line width=0.1mm] (0.92,1.1) -- (0.707,0.707);
\draw[line width=0.1mm] (1.0,1.1) -- (0.924,0.383);
\node at (1.4,0.8) {\tiny $N_{10}$};
\draw[line width=0.1mm] (1.3,0.7) -- (0.98,0.2);
\draw[line width=0.1mm] (1.2,0.8) -- (0.75,1) -- (0.6,0.8);
\end{scope}
\end{tikzpicture}
\end{equation}

We see how $N_0'$ is split into two $\Dk{n-1}$
along a $\Dk{n-2}$, which we denote by $P_0$.
Note that these are schematic diagrams,
$N_{10}$ and $N_{20}$ are not necessarily $(n-1)$-balls.

Since $M_0$ is an $n$-ball by hypothesis,
it can be thought of as a trivial cobordism
from $N_{20} \cup_P N_{10}$ to $\Dk{n-1}$
(technically, this would be a cobordism that is a
mix between cornered cobordism and relative cobordism;
the corner is $P_1$, and the vertical boundary is
the $\Dk{n-1}$ between $P_0$ and $P_2$;
if we double $M_0$ along the vertical boundary,
then we get a trivial cornered cobordism).
The rightmost diagram above shows $M_0$
for Yetter's case, which is when $Q$
is a torus with one boundary component.

$Q \#^{(1)} Q$ and $Q \#^{(2)} Q$
are schematically represented as follows:
\footnote{
Here the superscript in $\#^{(i)}$
is used to indicate the Y-product from which the $\#$ operation
is for.}

\begin{equation}
\label{e:diagram-bialg-QQ}
Q \#^{(1)} Q =
\begin{tikzpicture}
\draw (-0.7,0) -- (1.9,0);
\draw (0,0.7) -- (0,-0.7);
\draw (1.2,0.7) -- (1.2,-0.7);
\draw (0,0.7)
	.. controls +(180:0.6cm) and +(90:0.6cm) .. (-0.7,0);
\draw (1.2,0.7)
	.. controls +(0:0.6cm) and +(90:0.6cm) .. (1.9,0);
\draw (0,-0.7)
	.. controls +(180:0.6cm) and +(-90:0.6cm) .. (-0.7,0);
\draw (1.2,-0.7)
	.. controls +(0:0.6cm) and +(-90:0.6cm) .. (1.9,0);
\draw (0,0.7)
	.. controls +(0:0.6cm) and +(110:0.1cm) ..
	(0.55,0.5) .. controls +(-70:0.05cm) and +(-110:0.05cm) ..
	(0.65,0.5) .. controls +(70:0.1cm) and +(180:0.6cm) ..
	(1.2,0.7);
\draw (0,-0.7)
	.. controls +(0:0.6cm) and +(-110:0.1cm) ..
	(0.55,-0.5) .. controls +(70:0.05cm) and +(110:0.05cm) ..
	(0.65,-0.5) .. controls +(-70:0.1cm) and +(180:0.5cm) ..
	(1.2,-0.7);
\node at (0.6,0.28) {\tiny $M_0 \cup \overline{M_0}$};
\node at (0.6,-0.28) {\tiny $\overline{M_0} \cup M_0$};
\end{tikzpicture}
\;\;\;
;
\;\;\;
Q \#^{(2)} Q =
\begin{tikzpicture}
\begin{scope}[shift={(0,-0.6)}]
\draw (-0.7,0) -- (0.7,0);
\draw (-0.7,1.2) -- (0.7,1.2);
\draw (0,1.9) -- (0,-0.7);
\draw (0,1.9)
	.. controls +(180:0.6cm) and +(90:0.6cm) .. (-0.7,1.2);
\draw (0,1.9)
	.. controls +(0:0.6cm) and +(90:0.6cm) .. (0.7,1.2);
\draw (0,-0.7)
	.. controls +(180:0.6cm) and +(-90:0.6cm) .. (-0.7,0);
\draw (0,-0.7)
	.. controls +(0:0.6cm) and +(-90:0.6cm) .. (0.7,0);
\draw (-0.7,1.2)
	.. controls +(-90:0.6cm) and +(160:0.1cm) ..
	(-0.5,0.65) .. controls +(-20:0.05cm) and +(20:0.05cm) ..
	(-0.5,0.55) .. controls +(-160:0.1cm) and +(90:0.6cm) ..
	(-0.7,0);
\draw (0.7,1.2)
	.. controls +(-90:0.6cm) and +(20:0.1cm) ..
	(0.5,0.65) .. controls +(-160:0.05cm) and +(160:0.05cm) ..
	(0.5,0.55) .. controls +(-20:0.1cm) and +(90:0.6cm) ..
	(0.7,0);
\end{scope}
\end{tikzpicture}
\end{equation}

In each middle piece $M_0 \cup \ov{M_0}$,
the union is taken over one of the $\Dk{n-1}$'s
in $N_0'$.

Finally $(Q \#^{(1)} Q) \#^{(2)} (Q \#^{(1)} Q)
= (Q \#^{(2)} Q) \#^{(1)} (Q \#^{(2)} Q)$
is schematically represented as follows:

\begin{equation}
\label{e:diagram-bialg-QQQ}
\begin{tikzpicture}
\draw (-0.7,0) -- (1.9,0);
\draw (-0.7,1.2) -- (1.9,1.2);
\draw (0,1.9) -- (0,-0.7);
\draw (1.2,1.9) -- (1.2,-0.7);
\draw (0,-0.7)
	.. controls +(180:0.6cm) and +(-90:0.6cm) .. (-0.7,0);
\draw (1.2,-0.7)
	.. controls +(0:0.6cm) and +(-90:0.6cm) .. (1.9,0);
\draw (0,1.9)
	.. controls +(180:0.6cm) and +(90:0.6cm) .. (-0.7,1.2);
\draw (1.2,1.9)
	.. controls +(0:0.5cm) and +(90:0.5cm) .. (1.9,1.2);
\begin{scope}[shift={(0,1.2)}]
\draw (0,0.7)
	.. controls +(0:0.6cm) and +(110:0.1cm) ..
	(0.55,0.5) .. controls +(-70:0.05cm) and +(-110:0.05cm) ..
	(0.65,0.5) .. controls +(70:0.1cm) and +(180:0.6cm) ..
	(1.2,0.7);
\end{scope}
\draw (0,-0.7)
	.. controls +(0:0.6cm) and +(-110:0.1cm) ..
	(0.55,-0.5) .. controls +(70:0.05cm) and +(110:0.05cm) ..
	(0.65,-0.5) .. controls +(-70:0.1cm) and +(180:0.5cm) ..
	(1.2,-0.7);
\draw (-0.7,1.2)
	.. controls +(-90:0.6cm) and +(160:0.1cm) ..
	(-0.5,0.65) .. controls +(-20:0.05cm) and +(20:0.05cm) ..
	(-0.5,0.55) .. controls +(-160:0.1cm) and +(90:0.6cm) ..
	(-0.7,0);
\begin{scope}[shift={(1.2,0)}]
\draw (0.7,1.2)
	.. controls +(-90:0.6cm) and +(20:0.1cm) ..
	(0.5,0.65) .. controls +(-160:0.05cm) and +(160:0.05cm) ..
	(0.5,0.55) .. controls +(-20:0.1cm) and +(90:0.6cm) ..
	(0.7,0);
\end{scope}
\end{tikzpicture}
\end{equation}

We will discuss the square in the middle later.

Consider the following diagrams,
which represent $\coYYbar_Q^{(2)} \circ \YYbar_Q^{(1)}$
the same way that a fat graph represents
a relative cobordism as described in
\secref{s:basic-construction},
that is,
the diagram forms a ``spine'' onto which
copies of $M_0 \times I$ are glued
(we show this for parts of $\YYbar_Q^{(1)}$
in the right diagram below).
Note that the fact that the piece labeled (*)
is also $M_0 \times I$
relies on the fact that $N_1'$ is simply a disk.

\begin{equation}
\label{e:prod-then-coprod}
\Xi_1 :=
\coYYbar_Q^{(2)}
\circ
\YYbar_Q^{(1)}
=
\begin{tikzpicture}
\begin{scope}[shift={(0,0.3)}]
\draw (0,0.3) -- (0.9,0.3);
\draw (0,0.3) to[out=-90,in=90] (0.3,-0.3);
\draw (0.9,0.3) to[out=-90,in=90] (0.6,-0.3);
\draw (0.3,-0.3) -- (0.3,-0.6);
\draw (0.6,-0.3) -- (0.6,-0.6);
\draw (0.3,-0.3) -- (0.6,-0.3);
\draw (0.3,-0.6) to[out=-135,in=90] (0.2,-1.0);
\draw (0.3,-0.6) to[out=-45,in=90] (0.4,-0.8);
\draw (0.6,-0.6) to[out=-135,in=90] (0.5,-1.0);
\draw (0.6,-0.6) to[out=-45,in=90] (0.7,-0.8);
\draw (0.2,-1.0) -- (0.5,-1.0);
\draw (0.4,-0.8) -- (0.7,-0.8);
\draw (0.2,0.2) -- (0.4,0.4);
\draw (0.5,0.2) -- (0.7,0.4);
\draw (0.2,0.2) to[out=-90,in=135] (0.35,-0.1);
\draw (0.5,0.2) to[out=-90,in=45] (0.35,-0.1);
\draw (0.4,0.4) to[out=-90,in=135] (0.55,0.1);
\draw (0.7,0.4) to[out=-90,in=45] (0.55,0.1);
\draw (0.35,-0.1) -- (0.35,-0.4);
\draw (0.55,0.1) -- (0.55,-0.2);
\draw (0.35,-0.4) -- (0.55,-0.2);
\draw (0.55,-0.2) to[out=-90,in=90] (0.75,-0.6);
\draw (0.35,-0.4) ..controls +(-90:0.2cm) and +(90:0.5cm) ..
	(0.15,-1.2);
\draw (0.15,-1.2) -- (0.75,-0.6);
\begin{scope}[line width=0.1mm]
\draw (0.3,0.3) to[out=-90,in=135] (0.45,0);
\draw (0.6,0.3) to[out=-90,in=45] (0.45,0);
\draw (0.45,0) -- (0.45,-0.6);
\draw (0.45,-0.6) to[out=-135,in=90] (0.35,-1.0);
\draw (0.45,-0.6) to[out=-45,in=90] (0.55,-0.8);
\end{scope}
\end{scope}
\end{tikzpicture}
=
\begin{tikzpicture}
\begin{scope}[shift={(0,0.3)}]
\draw (0,0.3) -- (0.9,0.3);
\draw (0,0.3) to[out=-90,in=90] (0.3,-0.3);
\draw (0.9,0.3) to[out=-90,in=90] (0.6,-0.3);
\draw (0.3,-0.3) -- (0.3,-0.6);
\draw (0.6,-0.3) -- (0.6,-0.6);
\draw (0.3,-0.3) -- (0.6,-0.3);
\draw (0.3,-0.6) to[out=-135,in=90] (0.2,-1.0);
\draw (0.3,-0.6) to[out=-45,in=90] (0.4,-0.8);
\draw (0.6,-0.6) to[out=-135,in=90] (0.5,-1.0);
\draw (0.6,-0.6) to[out=-45,in=90] (0.7,-0.8);
\draw (0.2,-1.0) -- (0.5,-1.0);
\draw (0.4,-0.8) -- (0.7,-0.8);
\begin{scope}[line width=0.1mm]
\draw (0.3,0.3) to[out=-90,in=135] (0.45,0);
\draw (0.6,0.3) to[out=-90,in=45] (0.45,0);
\draw (0.45,0) -- (0.45,-0.6);
\draw (0.45,-0.6) to[out=-135,in=90] (0.35,-1.0);
\draw (0.45,-0.6) to[out=-45,in=90] (0.55,-0.8);
\end{scope}
\end{scope}
\end{tikzpicture}
\cup
\begin{tikzpicture}
\begin{scope}[shift={(0,0.3)}]
\draw (0.2,0.2) -- (0.4,0.4);
\draw (0.5,0.2) -- (0.7,0.4);
\draw (0.2,0.2) to[out=-90,in=135] (0.35,-0.1);
\draw (0.5,0.2) to[out=-90,in=45] (0.35,-0.1);
\draw (0.4,0.4) to[out=-90,in=135] (0.55,0.1);
\draw (0.7,0.4) to[out=-90,in=45] (0.55,0.1);
\draw (0.35,-0.1) -- (0.35,-0.4);
\draw (0.55,0.1) -- (0.55,-0.2);
\draw (0.35,-0.4) -- (0.55,-0.2);
\draw (0.55,-0.2) to[out=-90,in=90] (0.75,-0.6);
\draw (0.35,-0.4) ..controls +(-90:0.2cm) and +(90:0.5cm) ..
	(0.15,-1.2);
\draw (0.15,-1.2) -- (0.75,-0.6);
\begin{scope}[line width=0.1mm]
\draw (0.3,0.3) to[out=-90,in=135] (0.45,0);
\draw (0.6,0.3) to[out=-90,in=45] (0.45,0);
\draw (0.45,0) -- (0.45,-0.6);
\draw (0.45,-0.6) to[out=-135,in=90] (0.35,-1.0);
\draw (0.45,-0.6) to[out=-45,in=90] (0.55,-0.8);
\end{scope}
\end{scope}
\end{tikzpicture}
=
\begin{tikzpicture}
\draw (0,0.6) -- (0.9,0.6);
\draw (0.2,0.5) -- (0.4,0.7);
\draw (0.5,0.5) -- (0.7,0.7);
\draw (0.2,0) -- (0.7,0);
\draw (0.35,-0.1) -- (0.55,0.1);
\draw (0.1,-0.7) -- (0.6,-0.7);
\draw (0.3,-0.5) -- (0.8,-0.5);
\draw (0.25,-0.8) -- (0.65,-0.4);
\draw (0.2,0.5) to[out=-90,in=120] (0.35,-0.1);
\draw (0.5,0.5) to[out=-90,in=60] (0.35,-0.1);
\draw (0.4,0.7) to[out=-90,in=120] (0.55,0.1);
\draw (0.7,0.7) to[out=-90,in=60] (0.55,0.1);
\draw (0,0.6) to[out=-90,in=120] (0.2,0);
\draw (0.9,0.6) to[out=-90,in=60] (0.7,0);
\draw (0.35,-0.1) to[out=-100,in=90] (0.25,-0.8);
\draw (0.55,0.1) to[out=-70,in=90] (0.65,-0.4);
\draw (0.2,0) to[out=-100,in=90] (0.1,-0.7);
\draw (0.2,0) to[out=-70,in=90] (0.3,-0.5);
\draw (0.7,0) to[out=-100,in=90] (0.6,-0.7);
\draw (0.7,0) to[out=-70,in=90] (0.8,-0.5);
\begin{scope}[line width=0.1mm]
\draw (0.3,0.6) to[out=-90,in=120] (0.45,0);
\draw (0.6,0.6) to[out=-90,in=60] (0.45,0);
\draw (0.45,0) to[out=-100,in=90] (0.35,-0.7);
\draw (0.45,0) to[out=-70,in=90] (0.55,-0.5);
\end{scope}
\end{tikzpicture}
\;\;
;
\hspace{2cm}
\begin{tikzpicture}
\draw (0,0.3) -- (0.9,0.3);
\draw (0,0.3) to[out=-90,in=90] (0.3,-0.3);
\draw (0.9,0.3) to[out=-90,in=90] (0.6,-0.3);
\draw (0.3,-0.3) -- (0.6,-0.3);
\draw (0.2,0.2) -- (0.4,0.4);
\draw (0.5,0.2) -- (0.7,0.4);
\draw (0.2,0.2) to[out=-90,in=135] (0.35,-0.1);
\draw (0.5,0.2) to[out=-90,in=45] (0.35,-0.1);
\draw (0.4,0.4) to[out=-90,in=135] (0.55,0.1);
\draw (0.7,0.4) to[out=-90,in=45] (0.55,0.1);
\draw (0.35,-0.1) -- (0.35,-0.4);
\draw (0.55,0.1) -- (0.55,-0.2);
\draw (0.35,-0.4) -- (0.55,-0.2);
\begin{scope}[line width=0.1mm]
\draw (0.3,0.3) to[out=-90,in=135] (0.45,0);
\draw (0.6,0.3) to[out=-90,in=45] (0.45,0);
\draw (0.45,0) -- (0.45,-0.3);
\end{scope}
\draw (-0.4,0.1) -- (-0.1,0.1);
\draw (-0.2,0) -- (-0.1,0.1);
\draw (-0.4,0.1) .. controls +(-135:0.1cm) and +(180:0.2cm) ..
	(-0.2,0);
\draw (-0.25,-0.5) -- (0.05,-0.5);
\draw (-0.05,-0.6) -- (0.05,-0.5);
\draw (-0.25,-0.5) .. controls +(-135:0.1cm) and +(180:0.2cm) ..
	(-0.05,-0.6);
\draw (-0.2,0) .. controls +(-90:0.2cm) and +(90:0.2cm) ..
	(-0.05,-0.6);
\draw (-0.1,0.1) .. controls +(-90:0.2cm) and +(90:0.2cm) ..
	(0.05,-0.5);
\draw (-0.43,0.07) .. controls +(-90:0.2cm) and +(90:0.2cm) ..
	(-0.28,-0.55);
\draw[->] (0,-0.2) -- (0.2,-0.1);
\draw (1,0) -- (1.4,0);
\draw (0.9,-0.1) -- (1.1,0.1);
\draw (1.3,-0.1) -- (1.5,0.1);
\draw (0.9,-0.1) to[out=0,in=-135] (1.15,-0.05);
\draw (1.15,-0.05) to[out=-135,in=180] (1.3,-0.1);
\draw (1.1,0.1) to[out=0,in=45] (1.25,0.05);
\draw (1.25,0.05) to[out=45,in=180] (1.5,0.1);
\draw (0.9,-0.1) .. controls +(-90:0.3cm) and +(-90:0.3cm) ..
	(1.3,-0.1);
\draw (1.5,0.1) .. controls +(-90:0.2cm) and +(43:0.2cm) ..
	(1.22,-0.29);
\draw[line width=0.1mm] (1.4,0)
	.. controls +(-90:0.15cm) and +(45:0.15cm) ..
	(1.22,-0.29);
\draw[->] (1.2,0.2)
	.. controls +(90:0.4cm) and +(60:0.3cm) .. (0.55,0.4);
\begin{scope}[scale={0.2}]
\node at (9,0) {\tiny (*)};
\end{scope}
\end{tikzpicture}
\end{equation}

We denote $\Xi_1 := \coYYbar_Q^{(2)} \circ \YYbar_Q^{(1)}$
for brevity.
The first diagram is the union of
the two subsequent diagram along the thinner lines.
The last diagram (before the semicolon)
is easily seen to represent the same cornered cobordism
as $\Xi_1$.
The top half represents $\YYbar_Q^{(1)}$;
the ``
\begin{tikzpicture}
\draw (0,0) -- (0.3,0);
\draw (0.08,0.1) -- (0.08,-0.1);
\draw (0.22,0.1) -- (0.22,-0.1);
\end{tikzpicture}
''
at the top represents the incoming boundary $Q \#^{(1)} Q$
of $\YYbar_Q^{(1)}$
as in \eqnref{e:diagram-bialg-QQ},
and the ``
\begin{tikzpicture}
\draw (0,0) -- (0.2,0);
\draw (0.1,0.1) -- (0.1,-0.1);
\end{tikzpicture}
''
in the middle represents $Q$
as in \eqnref{e:diagram-bialg-Q}.

Now the other side of the bialgebra relation is
$(\YYbar_Q^{(1)} \#^{(2)} \YYbar_Q^{(1)})
\circ
(\coYYbar_Q^{(2)} \#^{(1)} \coYYbar_Q^{(2)})$,
which we denote by
$\Xi_2$ for brevity;
we depict it by the same graphical representation as follows:

\begin{equation}
\label{e:coprod-then-prod}
\Xi_2 :=
(\YYbar_Q^{(1)} \#^{(2)} \YYbar_Q^{(1)})
\circ
(\coYYbar_Q^{(2)} \#^{(1)} \coYYbar_Q^{(2)})
=
\begin{tikzpicture}
\begin{scope}[shift={(0,0.3)}]
\draw (0,0.3) -- (0.9,0.3);
\draw (0,0) -- (0,0.3);
\draw (0.9,0) -- (0.9,0.3);
\draw (0,0) to[out=-45,in=90] (0.1,-0.2);
\draw (0.9,0) to[out=-45,in=90] (1.0,-0.2);
\draw (0.1,-0.2) to[out=-90,in=90] (0.4,-0.8);
\draw (1.0,-0.2) to[out=-90,in=90] (0.7,-0.8);
\draw (0.4,-0.8) -- (0.7,-0.8);
\draw (0,0) to[out=-135,in=90] (-0.1,-0.4);
\draw (0.9,0) to[out=-135,in=90] (0.8,-0.4);
\draw (-0.1,-0.4) to[out=-90,in=90] (0.2,-1);
\draw (0.8,-0.4) to[out=-90,in=90] (0.5,-1);
\draw (0.2,-1) -- (0.5,-1);
\draw (0.2,0.2) -- (0.4,0.4);
\draw (0.2,0.2) 
  .. controls +(-90:0.2cm) and +(90:0.5cm) .. (0,-0.6);
\draw (0.4,0.4) to[out=-90,in=90] (0.6,0);
\draw (0.5,0.2) -- (0.7,0.4);
\draw (0.5,0.2) ..controls +(-90:0.2cm) and +(90:0.5cm) .. (0.3,-0.6);
\draw (0.7,0.4) to[out=-90,in=90] (0.9,0);
\draw (0,-0.6) to[out=-90,in=135] (0.15,-0.9);
\draw (0.6,0) to[out=-90,in=135] (0.75,-0.3);
\draw (0.3,-0.6) to[out=-90,in=45] (0.15,-0.9);
\draw (0.9,0) to[out=-90,in=45] (0.75,-0.3);
\draw (0.15,-0.9) -- (0.15,-1.2);
\draw (0.75,-0.3) -- (0.75,-0.6);
\draw (0.15,-1.2) -- (0.75,-0.6);
\begin{scope}[line width=0.1mm]
\draw (0.3,0) -- (0.3,0.3);
\draw (0.6,0) -- (0.6,0.3);
\draw (0.3,0) to[out=-45,in=90] (0.4,-0.2);
\draw (0.6,0) to[out=-45,in=90] (0.7,-0.2);
\draw (0.4,-0.2) to[out=-90,in=135] (0.55,-0.5);
\draw (0.7,-0.2) to[out=-90,in=45] (0.55,-0.5);
\draw (0.3,0) to[out=-135,in=90] (0.2,-0.4);
\draw (0.6,0) to[out=-135,in=90] (0.5,-0.4);
\draw (0.55,-0.5) -- (0.55,-0.8);
\draw (0.35,-0.7) -- (0.35,-1);
\draw (0.2,-0.4) to[out=-90,in=135] (0.35,-0.7);
\draw (0.5,-0.4) to[out=-90,in=45] (0.35,-0.7);
\draw (0.3,0) to[out=-135,in=90] (0.2,-0.4);
\draw (0.6,0) to[out=-135,in=90] (0.5,-0.4);
\draw (0.5,-0.4) to[out=-90,in=45] (0.35,-0.7);
\draw (0.7,-0.2) to[out=-90,in=45] (0.55,-0.5);
\draw (0.35,-0.7) -- (0.35,-1);
\draw (0.55,-0.5) -- (0.55,-0.8);
\end{scope}
\end{scope}
\end{tikzpicture}
=
\begin{tikzpicture}
\begin{scope}[shift={(0,0.3)}]
\draw (0,0.3) -- (0.9,0.3);
\draw (0,0) -- (0,0.3);
\draw (0.3,0) -- (0.3,0.3);
\draw (0.6,0) -- (0.6,0.3);
\draw (0.9,0) -- (0.9,0.3);
\draw[line width=0.1mm] (0,0) to[out=-45,in=90] (0.1,-0.2);
\draw[line width=0.1mm] (0.3,0) to[out=-45,in=90] (0.4,-0.2);
\draw[line width=0.1mm] (0.6,0) to[out=-45,in=90] (0.7,-0.2);
\draw[line width=0.1mm] (0.9,0) to[out=-45,in=90] (1.0,-0.2);
\draw[line width=0.1mm] (0.4,-0.2) to[out=-90,in=135] (0.55,-0.5);
\draw[line width=0.1mm] (0.7,-0.2) to[out=-90,in=45] (0.55,-0.5);
\draw[line width=0.1mm] (0.1,-0.2) to[out=-90,in=90] (0.4,-0.8);
\draw[line width=0.1mm] (1.0,-0.2) to[out=-90,in=90] (0.7,-0.8);
\draw[line width=0.1mm] (0.4,-0.8) -- (0.7,-0.8);
\draw[line width=0.1mm] (0.55,-0.5) -- (0.55,-0.8);
\draw (0,0) to[out=-135,in=90] (-0.1,-0.4);
\draw (0.3,0) to[out=-135,in=90] (0.2,-0.4);
\draw (0.6,0) to[out=-135,in=90] (0.5,-0.4);
\draw (0.9,0) to[out=-135,in=90] (0.8,-0.4);
\draw (0.2,-0.4) to[out=-90,in=135] (0.35,-0.7);
\draw (0.5,-0.4) to[out=-90,in=45] (0.35,-0.7);
\draw (-0.1,-0.4) to[out=-90,in=90] (0.2,-1);
\draw (0.8,-0.4) to[out=-90,in=90] (0.5,-1);
\draw (0.2,-1) -- (0.5,-1);
\draw (0.35,-0.7) -- (0.35,-1);
\end{scope}
\end{tikzpicture}
\cup
\begin{tikzpicture}
\begin{scope}[shift={(0,0.3)}]
\draw[line width=0.1mm] (0.2,0.2) -- (0.4,0.4);
\draw[line width=0.1mm] (0.2,0.2) 
  .. controls +(-90:0.2cm) and +(90:0.5cm) .. (0,-0.6);
\draw[line width=0.1mm] (0.4,0.4) to[out=-90,in=90] (0.6,0);
\draw[line width=0.1mm] (0.3,0) to[out=-135,in=90] (0.2,-0.4);
\draw[line width=0.1mm] (0.3,0) to[out=-45,in=90] (0.4,-0.2);
\draw[line width=0.1mm] (0.3,0) -- (0.3,0.3);
\draw (0.5,0.2) -- (0.7,0.4);
\draw (0.5,0.2) ..controls +(-90:0.2cm) and +(90:0.5cm) .. (0.3,-0.6);
\draw (0.7,0.4) to[out=-90,in=90] (0.9,0);
\draw (0.6,0) to[out=-135,in=90] (0.5,-0.4);
\draw (0.6,0) to[out=-45,in=90] (0.7,-0.2);
\draw (0.6,0) -- (0.6,0.3);
\draw[line width=0.1mm] (0,-0.6) to[out=-90,in=135] (0.15,-0.9);
\draw[line width=0.1mm] (0.2,-0.4) to[out=-90,in=135] (0.35,-0.7);
\draw[line width=0.1mm] (0.4,-0.2) to[out=-90,in=135] (0.55,-0.5);
\draw[line width=0.1mm] (0.6,0) to[out=-90,in=135] (0.75,-0.3);
\draw (0.3,-0.6) to[out=-90,in=45] (0.15,-0.9);
\draw (0.5,-0.4) to[out=-90,in=45] (0.35,-0.7);
\draw (0.7,-0.2) to[out=-90,in=45] (0.55,-0.5);
\draw (0.9,0) to[out=-90,in=45] (0.75,-0.3);
\draw (0.15,-0.9) -- (0.15,-1.2);
\draw (0.35,-0.7) -- (0.35,-1);
\draw (0.55,-0.5) -- (0.55,-0.8);
\draw (0.75,-0.3) -- (0.75,-0.6);
\draw (0.15,-1.2) -- (0.75,-0.6);
\end{scope}
\end{tikzpicture}
\end{equation}

We want to show that $\Xi_1$ and $\Xi_2$
are in fact the same cornered cobordism.
In the diagram below,
we observe that each piece of $M_0 \times I$ in
$\Xi_1$ has a corresponding part in $\Xi_2$:

\begin{equation}
\begin{tikzpicture}
\draw (0,0.6) -- (0.9,0.6);
\draw (0.2,0.5) -- (0.4,0.7);
\draw (0.5,0.5) -- (0.7,0.7);
\draw (0.2,0) -- (0.7,0);
\draw (0.35,-0.1) -- (0.55,0.1);
\draw (0.1,-0.7) -- (0.6,-0.7);
\draw (0.3,-0.5) -- (0.8,-0.5);
\draw (0.25,-0.8) -- (0.65,-0.4);
\draw (0.2,0.5) to[out=-90,in=120] (0.35,-0.1);
\draw (0.5,0.5) to[out=-90,in=60] (0.35,-0.1);
\draw (0.4,0.7) to[out=-90,in=120] (0.55,0.1);
\draw (0.7,0.7) to[out=-90,in=60] (0.55,0.1);
\draw (0,0.6) to[out=-90,in=120] (0.2,0);
\draw (0.9,0.6) to[out=-90,in=60] (0.7,0);
\draw (0.35,-0.1) to[out=-100,in=90] (0.25,-0.8);
\draw (0.55,0.1) to[out=-70,in=90] (0.65,-0.4);
\draw (0.2,0) to[out=-100,in=90] (0.1,-0.7);
\draw (0.2,0) to[out=-70,in=90] (0.3,-0.5);
\draw (0.7,0) to[out=-100,in=90] (0.6,-0.7);
\draw (0.7,0) to[out=-70,in=90] (0.8,-0.5);
\begin{scope}[line width=0.1mm]
\draw (0.3,0.6) to[out=-90,in=120] (0.45,0);
\draw (0.6,0.6) to[out=-90,in=60] (0.45,0);
\draw (0.45,0) to[out=-100,in=90] (0.35,-0.7);
\draw (0.45,0) to[out=-70,in=90] (0.55,-0.5);
\end{scope}
\begin{scope}[shift={(1.5,0.3)}]
\draw (0,0.3) -- (0.9,0.3);
\draw (0,0) -- (0,0.3);
\draw (0.9,0) -- (0.9,0.3);
\draw (0,0) to[out=-45,in=90] (0.1,-0.2);
\draw (0.9,0) to[out=-45,in=90] (1.0,-0.2);
\draw (0.1,-0.2) to[out=-90,in=90] (0.4,-0.8);
\draw (1.0,-0.2) to[out=-90,in=90] (0.7,-0.8);
\draw (0.4,-0.8) -- (0.7,-0.8);
\draw (0,0) to[out=-135,in=90] (-0.1,-0.4);
\draw (0.9,0) to[out=-135,in=90] (0.8,-0.4);
\draw (-0.1,-0.4) to[out=-90,in=90] (0.2,-1);
\draw (0.8,-0.4) to[out=-90,in=90] (0.5,-1);
\draw (0.2,-1) -- (0.5,-1);
\draw (0.2,0.2) -- (0.4,0.4);
\draw (0.2,0.2) 
  .. controls +(-90:0.2cm) and +(90:0.5cm) .. (0,-0.6);
\draw (0.4,0.4) to[out=-90,in=90] (0.6,0);
\draw (0.5,0.2) -- (0.7,0.4);
\draw (0.5,0.2) ..controls +(-90:0.2cm) and +(90:0.5cm) .. (0.3,-0.6);
\draw (0.7,0.4) to[out=-90,in=90] (0.9,0);
\draw (0,-0.6) to[out=-90,in=135] (0.15,-0.9);
\draw (0.6,0) to[out=-90,in=135] (0.75,-0.3);
\draw (0.3,-0.6) to[out=-90,in=45] (0.15,-0.9);
\draw (0.9,0) to[out=-90,in=45] (0.75,-0.3);
\draw (0.15,-0.9) -- (0.15,-1.2);
\draw (0.75,-0.3) -- (0.75,-0.6);
\draw (0.15,-1.2) -- (0.75,-0.6);
\begin{scope}[line width=0.1mm]
\draw (0.3,0) -- (0.3,0.3);
\draw (0.6,0) -- (0.6,0.3);
\draw (0.3,0) to[out=-45,in=90] (0.4,-0.2);
\draw (0.6,0) to[out=-45,in=90] (0.7,-0.2);
\draw (0.4,-0.2) to[out=-90,in=135] (0.55,-0.5);
\draw (0.7,-0.2) to[out=-90,in=45] (0.55,-0.5);
\draw (0.3,0) to[out=-135,in=90] (0.2,-0.4);
\draw (0.6,0) to[out=-135,in=90] (0.5,-0.4);
\draw (0.55,-0.5) -- (0.55,-0.8);
\draw (0.35,-0.7) -- (0.35,-1);
\draw (0.2,-0.4) to[out=-90,in=135] (0.35,-0.7);
\draw (0.5,-0.4) to[out=-90,in=45] (0.35,-0.7);
\draw (0.3,0) to[out=-135,in=90] (0.2,-0.4);
\draw (0.6,0) to[out=-135,in=90] (0.5,-0.4);
\draw (0.5,-0.4) to[out=-90,in=45] (0.35,-0.7);
\draw (0.7,-0.2) to[out=-90,in=45] (0.55,-0.5);
\draw (0.35,-0.7) -- (0.35,-1);
\draw (0.55,-0.5) -- (0.55,-0.8);
\end{scope}
\end{scope}
\draw (0.5,0.7) to[out=60,in=120] (2,0.7);
\draw (0.9,0.7) to[out=60,in=60] (2.4,0.7);
\draw (0.1,0.7) to[out=120,in=120] (1.6,0.7);
\draw (0.7,-0.6) to[out=-60,in=-90] (2.1,-0.6);
\draw (0.15,-0.8) to[out=-150,in=-150] (1.6,-0.8);
\draw (0.45,-0.8) to[out=-45,in=-45] (1.9,-0.8);
\draw (2.7,-0.4) -- (2.15,-0.1);
\node at (2.95,-0.4) {\tiny heart};
\end{tikzpicture}
\end{equation}

There is only one piece in $\Xi_2$
that is missing from $\Xi_1$,
which is the piece in the center;
we refer to this piece as the \emph{heart}.

The heart is schematically represented as follows
(the second diagram shows the heart when $Q$
is the torus with one boundary component):
\begin{equation}
\begin{tikzpicture}
\draw (-0.7,-0.3) -- (0.3,-0.3) -- (0.7,0.3) -- (-0.3,0.3)
	-- (-0.7,-0.3);
\draw (-0.7,-0.3) .. controls +(90:0.5cm) and +(-135:0.1cm) ..
	(-0.5,0.7) .. controls +(45:0.1cm) and +(90:0.5cm) ..
	(-0.3,0.3);
\draw (0.3,-0.3) .. controls +(90:0.5cm) and +(-135:0.1cm) ..
	(0.5,0.7) .. controls +(45:0.1cm) and +(90:0.5cm) ..
	(0.7,0.3);
\draw (-0.45,0.74) -- (0.55,0.74);
\draw (-0.7,-0.3) arc (180:360:0.5);
\draw (-0.3,0.3) arc (180:360:0.5);
\draw (0.2,-0.6) -- (0.6,0);
\draw[dotted] (-0.5,0.7) -- (0.5,0.7);
\draw[dotted] (-0.5,0.7) -- (-0.5,0);
\draw[dotted] (-0.5,0) arc (180:360:0.5);
\draw[dotted] (0.5,0) -- (0.5,0.7);
\node at (-0.8,-1.1) {\tiny $N_{20} \cup_{P_2} \overline{N_{20}}$};
\draw[line width=0.1mm] (-0.8,-1) -- (-0.5,-0.3);
\node at (-1.6,0.4) {\tiny $N_{10} \cup_{P_1} \overline{N_{10}}$};
\draw[line width=0.1mm] (-1.2,0.2) -- (-0.58,-0.1);
\end{tikzpicture}
\;\;\;
;
\;\;\;
\begin{tikzpicture}
\draw (0,0) circle (1cm);
\draw (0,0) ellipse (1cm and 0.4cm);
\draw (-0.2,-0.4) .. controls +(90:0.5cm) and +(180:0.2cm) ..
	(0,1) .. controls +(0:0.15cm) and +(90:0.4cm) ..
	(0.2,0.4);
\draw (-0.5,-0.35) .. controls +(90:0.45cm) and +(-170:0.2cm) ..
	(-0.3,0.95) .. controls +(10:0.15cm) and +(90:0.4cm) ..
	(-0.1,0.4);
\draw (-0.75,-0.3) .. controls +(90:0.4cm) and +(-165:0.2cm) ..
	(-0.55,0.83) .. controls +(15:0.15cm) and +(90:0.2cm) ..
	(-0.42,0.35);
\draw (-0.92,-0.15) .. controls +(90:0.2cm) and +(-135:0.1cm) ..
	(-0.8,0.6) .. controls +(45:0.1cm) and +(90:0.2cm) ..
	(-0.7,0.3);
\begin{scope}[line width=0.1mm]
\draw (-1,0.05) -- (0,0) -- (-0.2,-0.4);
\draw (-0.5,-0.35) -- +(63.4:0.405cm);
\draw (-0.75,-0.3) -- +(63.4:0.365cm);
\draw (-0.92,-0.15) -- +(63.4:0.21cm);
\end{scope}
\draw (-0.5,-0.35) arc (180:350:0.31);
\draw (-0.75,-0.3) arc (185:351:0.65);
\draw (-0.92,-0.15) .. controls +(-90:0.2cm) and +(135:0.1cm) ..
	(-0.8,-0.6);
\draw (0.85,-0.2) .. controls +(-90:0.4cm) and +(20:0.3cm) ..
	(0.3,-0.95);
\node at (-1.2,-0.8) {\tiny $N_{10}$};
\draw[line width=0.1mm] (-1.1,-0.7) -- (-0.6,-0.31);
\draw[line width=0.1mm] (-1.2,-0.65) -- (-0.95,-0.15);
\node at (-0.8,-1.2) {\tiny $N_{20}$};
\draw[line width=0.1mm] (-0.7,-1.1) -- (-0.3,-0.4);
\draw[line width=0.1mm] (-0.8,-1.05) -- (-0.82,-0.23);
\node at (-1.2,1.0) {\tiny $P \times I$};
\draw[line width=0.1mm] (-1.2,0.85) -- (-0.87,0.3);
\draw[line width=0.1mm] (-0.9,0.9) -- (-0.67,0.65);
\draw[line width=0.1mm] (-0.9,1.0) -- (-0.4,0.85);
\node at (-1.35,0) {\tiny $P_1$};
\draw[line width=0.1mm] (-1.2,0.05) -- (-1,0.05);
\node at (0,-1.4) {\tiny $P_2$};
\draw[line width=0.1mm] (0,-1.25) -- (-0.2,-0.4);
\end{tikzpicture}
\end{equation}

The dotted lines represent some fixed points
under the symmetry associated with $\coYYbar_Q^{(2)}$.
Concretely, the horizontal dotted line at the top
is a copy of $N_{20} \cup_{P_2} \ov{N_{20}}$,
the vertical dotted line on the left and right
are copies of $N_{10}$,
and the dotted arc at the bottom is a copy of
$P_1 \times I$.

From the discussion on $M_0$ as a trivial cobordism
after \eqnref{e:diagram-bialg-Q},
it follows easily that
the heart is a trivial cornered cobordism,
where the corner is the dotted line.

Thus, we may apply the following operation
which folds and flattens the heart,
and $\Xi_2$ will remain unchanged as a cornered cobordism:

\begin{equation}
\label{e:diagram-collapse-heart}
\begin{tikzpicture}
\draw (0,0.6) -- (0.6,0.6);
\draw (0.1,-0.6) -- (0.5,-0.4);
\draw (0,0.6) to[out=-135,in=90]
	(-0.2,0) to[out=-90,in=135]
	(0.1,-0.6);
\draw (0.6,0.6) to[out=-135,in=90]
	(0.4,0) to[out=-90,in=45]
	(0.1,-0.6);
\draw[line width=0.1mm] (0,0.6) to[out=-45,in=90] (0.2,0.2);
\draw[line width=0.1mm] (0.2,0.2) to[out=-90,in=135] (0.5,-0.4);
\draw (0.6,0.6) to[out=-45,in=90] (0.8,0.2);
\draw (0.8,0.2) to[out=-90,in=45] (0.5,-0.4);
\draw[dotted] (0,0.6) -- (0,0)
	to[out=-90,in=135] (0.3,-0.5);
\draw[dotted] (0.6,0.6) -- (0.6,0)
	to[out=-90,in=45] (0.3,-0.5);
\end{tikzpicture}
\;\;
\begin{tikzpicture}
\draw (0,0.6) -- (0.6,0.6);
\draw (0.2,-0.6) -- (0.3,-0.8);
\draw[line width=0.1mm] (0.3,-0.8) -- (0.4,-0.4);
\draw (0,0.6) to[out=-120,in=90]
	(-0.1,-0.1) to[out=-90,in=135]
	(0.2,-0.6);
\draw (0.6,0.6) to[out=-120,in=90]
	(0.5,0) to[out=-90,in=45]
	(0.2,-0.6);
\draw[line width=0.1mm] (0,0.6) to[out=-45,in=90]
	(0.1,0.1) to[out=-90,in=135]
	(0.4,-0.4);
\draw[line width=0.1mm] (0.6,0.6) to[out=-45,in=90]
	(0.7,0.1) to[out=-90,in=45]
	(0.4,-0.4);
\draw[dotted] (0,0.6) to[out=-135,in=90]
	(-0.2,0) to[out=-90,in=135]
	(0.3,-0.8);
\draw[dotted] (0.6,0.6) to[out=-45,in=90]
	(0.8,0) to[out=-90,in=45]
	(0.3,-0.8);
\end{tikzpicture}
\;\;
\begin{tikzpicture}
\draw (0,0.6) -- (0.6,0.6);
\draw (0,0.6) -- (0,0)
	to[out=-90,in=135] (0.3,-0.5);
\draw (0.6,0.6) -- (0.6,0)
	to[out=-90,in=45] (0.3,-0.5);
\draw (0.3,-0.5) -- (0.3,-0.8);
\draw[dotted] (0,0.6) to[out=-135,in=90]
	(-0.2,0) to[out=-90,in=135]
	(0.3,-0.8);
\draw[dotted] (0.6,0.6) to[out=-45,in=90]
	(0.8,0) to[out=-90,in=45]
	(0.3,-0.8);
\node at (1,-0.8) {\tiny fins};
\draw[line width=0.1mm] (0.8,-0.75) -- (0.5,-0.5);
\draw[line width=0.1mm] (0.8,-0.8) -- (0.2,-0.6);
\end{tikzpicture}
\end{equation}

(We may also think of the operation above
as occurring after we remove the heart from $\Xi_2$,
and the diagrams depict how to identify the new boundary faces.)

Observe that shrinking the ``fins''
(see \eqnref{e:diagram-collapse-heart}) after the collapse
does not affect $\Xi_2$,
for the same reason that $\wdtld{\ii}$ is a unit for
$\wdtld{\YY}$ (see \eqnref{e:Y-thick} and discussion on
(co)unit after that).
After a slight modification, it's clear that we have $\Xi_1$.

\begin{equation}
\label{e:diagram-after-folding}
\input{diagram-after-folding.tikz}
\end{equation}

\end{proof}


\bibliographystyle{plain}
\bibliography{references}

\end{document}